\documentclass{amsart}
\usepackage{amsmath,amssymb
}\newcommand{\Alt}{\mathop{\mathrm{Alt}}}
\newcommand{\Sym}{\mathop{\mathrm{Sym}}}
\newcommand{\Aut}{\mathop{\mathrm{Aut}}}

\newcommand{\PSL}{\mathop{\mathrm{PSL}}}
\newcommand{\PGL}{\mathop{\mathrm{PGL}}}
\newcommand{\PSU}{\mathop{\mathrm{PSU}}}

\newcommand{\Out}{\mathop{\mathrm{Out}}}
\newcommand{\PSp}{\mathop{\mathrm{PSp}}}
\newcommand{\BBB}{\mathop{\mathrm{B}}}
\newcommand{\DDD}{\mathop{\mathrm{D}}}
\newcommand{\EEE}{\mathop{\mathrm{E}}}
\newcommand{\FFF}{\mathop{\mathrm{F}}}
\newcommand{\GL}{\mathop{\mathrm{GL}}}
\newcommand{\GU}{\mathop{\mathrm{GU}}}
\newcommand{\SU}{\mathop{\mathrm{SU}}}
\newcommand{\G}{\mathop{\mathrm{G}}}
\newcommand{\OO}{\mathop{\mathrm{O}}}
\newcommand{\Sp}{\mathop{\mathrm{Sp}}}
\newcommand{\Gal}{\mathop{\mathrm{Gal}}}
\newcommand{\SL}{\mathop{\mathrm{SL}}}

\newcommand{\POmega}{\mathop{\mathrm{P}\Omega}}

\renewcommand{\wr}{\mathop{\mathrm{wr}}}
\newtheorem{theorem}{Theorem}[section]
\newtheorem{lemma}[theorem]{Lemma}

\newtheorem{corollary}[theorem]{Corollary}
\newtheorem{notation}[theorem]{Notation}
\newtheorem{remark}[theorem]{Remark}
\newtheorem{proposition}[theorem]{Proposition}
\newtheorem{definition}[theorem]{Definition}

\newtheorem*{conjAA}{Theorem~A}
\def\cent#1#2{{\bf C}_{#1}(#2)}
\def\N#1#2{{\bf N}_{#1}(#2)}
\def\Z#1{{\bf Z}(#1)}
\def\E#1{{\bf E}(#1)}
\def\F#1{{\bf F}(#1)}
\def\FF#1{{\bf F^*}(#1)}

\begin{document}

\title[Coprime subdegrees]{On the maximal number of coprime subdegrees in finite primitive permutation groups}   

\author[S.~Dolfi]{Silvio Dolfi}
\address{Silvio Dolfi, Dipartimento di Matematica ``Ulisse
  Dini'', Universit\'a degli studi di Firenze,\newline Viale Morgagni 67/a, 50134 Firenze Italy}
\email{dolfi@math.unifi.it}

\author[R.~Guralnick]{Robert Guralnick}
\address{Robert Guralnick, Department of Mathematics, University of
  Southern California,\newline Los Angeles, California USA 90089--2532}
\email{guralnic@usc.edu}

\author[C. E. Praeger]{Cheryl E. Praeger}
\address{Cheryl E. Praeger, Centre for Mathematics of Symmetry and Computation,\newline
School of Mathematics and Statistics,\newline
The University of Western Australia,
 Crawley, WA 6009, Australia} \email{Cheryl.Praeger@uwa.edu.au}

\author[P. Spiga]{Pablo Spiga}
\address{Pablo Spiga, Centre for Mathematics of Symmetry and Computation,\newline
 School of Mathematics and Statistics,\newline
The University of Western Australia,
 Crawley, WA 6009, Australia} \email{pablo.spiga@unimib.it}

\thanks{Address correspondence to P. Spiga,
E-mail: pablo.spiga@unimib.it\\ 
The first author is supported by the MIUR project ``Teoria dei gruppi
ed applicazioni''. The second author was partially supported by NSF
grant DMS-1001962.    
The third author is supported by the ARC Federation
Fellowship Project 
FF0776186. The fourth author is supported by the University of Western Australia
as part of the  Federation Fellowship project.}

\subjclass[2000]{20B15, 20H30}
\keywords{coprime subdegrees; primitive groups}

\begin{abstract}
The subdegrees of a transitive permutation group are the orbit lengths of a point stabilizer. For a finite primitive permutation group which is not cyclic of prime order, the largest subdegree shares a non-trivial common factor with each non-trivial subdegree. On the other hand it is possible for non-trivial subdegrees of primitive groups to be coprime, a famous example being the rank $5$ action of the small Janko group $J_1$ on $266$ points which has subdegrees of lengths $11$ and $12$. We prove that, for every finite primitive group, the maximal size of a set of pairwise coprime non-trivial subdegrees is  at most $2$.
\end{abstract}
\maketitle

\section{Introduction}\label{intro}
In this paper we are concerned with the \emph{subdegrees} of a finite primitive
permutation group. The set of  of a transitive
group $G$ is the set of orbit lengths of  the stabilizer
$G_\omega$ of a point $\omega$, and we say
that a subdegree $d$ of $G$ is \emph{non-trivial} if $d\neq 1$. We
announced in~\cite[Theorem~$1.7$]{DGPS} that a primitive
permutation group could not have as many as three pairwise coprime non-trivial subdegrees. Here
we prove this theorem. 

\begin{theorem}\label{theorem}
Let $G$ be a finite primitive permutation group. Then the largest
subset of pairwise coprime non-trivial subdegrees of $G$ has
cardinality at most $2$. 
\end{theorem}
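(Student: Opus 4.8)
The plan is to assume for contradiction that a finite primitive group $G$ has three pairwise coprime non-trivial subdegrees $a,b,c$ and to derive a contradiction via the O'Nan--Scott theorem together with the classification of finite simple groups. Fix $\omega$, set $H=G_\omega$, and let $\alpha^H,\beta^H,\gamma^H$ be $H$-orbits of lengths $a,b,c$ respectively. Two elementary facts are used throughout. First, each subdegree divides $|H|$; more precisely $H_\alpha\cap H_\beta\cap H_\gamma$ has index in $H$ divisible by $\mathrm{lcm}(a,b,c)=abc$, so $abc\le|H|$, whereas trivially $a+b+c<|\Omega|=|G:H|$. Second, the ``common factor'' property of primitive groups recalled in the abstract --- the largest subdegree is divisible by a prime dividing $a$, by a prime dividing $b$, and by a prime dividing $c$ --- shows that none of $a,b,c$ equals the largest subdegree, so any counterexample has rank at least $5$. (This property is established in the preliminary section.)

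Next comes the O'Nan--Scott reduction. Write $N=\mathrm{soc}(G)=T^k$. If $N$ is abelian then $G=V\rtimes H$ is affine, with $V=\mathbb{F}_p^d$ and $H\le\GL(V)$ irreducible, and the non-trivial subdegrees are exactly the lengths of the $H$-orbits on $V\setminus\{0\}$; the claim becomes that no irreducible linear group has three pairwise coprime orbit lengths on the non-zero vectors. I would prove this via Aschbacher's subgroup-structure theorem for $\GL(V)$: the generic case ($H$ in class $\mathcal{S}$, i.e. essentially quasisimple) is handled with CFSG and orbit-size estimates, while for the geometric classes $\mathcal{C}_2,\dots,\mathcal{C}_8$ one exploits the recurring point that a fixed power of $p$, or of the order $q$ of a relevant subfield, divides all but at most two of the orbit lengths; a good deal of this affine analysis is already contained in the work announced in~\cite{DGPS}. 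If $N$ is non-abelian and $k\ge2$ (types SD, CD, HS, HC, TW, PA) then $\Omega$ carries a product structure $\Delta^k$ (respectively a diagonal structure $T^{k-1}$), and a suborbit length is a product of suborbit lengths of a single ``component'' action, times the index in $\Sym(k)$ of the stabiliser of the corresponding coordinate pattern. Coprimality of $a,b,c$ forces all but at most one coordinate to use the trivial component suborbit, so the problem collapses to the component action, which is of type AS or SD. In the SD case the relevant component suborbit lengths are (unions, under $\Out(T)$ and inversion, of) conjugacy class sizes of $T$, and one finishes by the known fact that three pairwise coprime non-trivial class sizes cannot occur in a finite non-abelian simple group.

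There remains the almost simple case: $T\trianglelefteq G\le\Aut(T)$ with $T$ simple and $H$ maximal in $G$; this is where the real work lies, and I would organise it by the isomorphism type of $T$. If $T$ is alternating or sporadic, the explicit lists of maximal subgroups and of primitive actions of small rank settle the matter, the sporadic groups and small-degree cases being treated computationally; here the $J_1$-action on $266$ points shows that the bound $2$ is attained. If $T$ is of Lie type in characteristic $p$ over $\mathbb{F}_q$, one splits according to the Aschbacher--Liebeck--Seitz description of $H$. When $H$ is parabolic, the suborbits are governed by the Bruhat decomposition and the geometry of the associated building; one exhibits a suborbit of length divisible by $p$ and argues that $p$ divides all but at most two of the non-trivial subdegrees. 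When $H$ is non-parabolic, $|G:H|$ is large relative to $|H|$, so $abc\le|H|<|G:H|$ forces the largest subdegree to be enormous; combined with the common factor property and a careful accounting of the primes dividing $|H|$ and $|G:H|$ --- in particular primitive prime divisors of $q^e-1$ for suitable $e$, which divide a point stabiliser and hence many suborbits --- this eliminates the remaining configurations. Assembling the cases yields the contradiction, hence Theorem~\ref{theorem}.

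The step I expect to be the main obstacle is exactly this last one: $T$ a classical group of Lie type with $H$ a non-parabolic maximal subgroup (Aschbacher classes $\mathcal{C}_2,\mathcal{C}_3$ or $\mathcal{S}$). Unlike the parabolic case there is no clean combinatorial model for the suborbits, and unlike the sporadic case one cannot simply compute; one must instead extract enough arithmetic about the suborbit lengths from the subgroup structure alone, typically by pinning down a single prime that is forced to divide all but at most two of the non-trivial subdegrees, and this has to be carried out family by family over the classical groups and their numerous maximal subgroups. This is the longest and most delicate part of the argument; the affine class-$\mathcal{S}$ case and the diagonal (SD) case are the next most technical, though considerably shorter.
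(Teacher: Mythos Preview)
Your outline has the right architecture --- O'Nan--Scott reduction, then a CFSG case analysis of the almost simple situation --- and you correctly flag the non-parabolic classical case as the heart of the matter. But the mechanism you propose for that case is not the one that works, and I do not see how to push your version through. You suggest combining the crude inequality $abc\le|H|$ with $|H|<|G:H|$ and then ``pinning down a single prime that is forced to divide all but at most two of the non-trivial subdegrees''. The inequality $|H|<|G:H|$ fails for many non-parabolic maximal subgroups (small-$k$ actions on $k$-subsets, large subfield subgroups, several $\mathcal{C}_1$ stabilisers of non-degenerate subspaces), and more seriously, for most non-parabolic maximal subgroups there is \emph{no} single prime dividing all non-trivial subdegrees. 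The paper's engine is different: one exhibits a normal subgroup $N\unlhd T_\omega$ (usually $\E{T_\omega}$, sometimes the last term of the derived series) that fixes only $\omega$; three pairwise coprime subdegrees would then give three proper subgroups of $N$ of pairwise coprime index, i.e.\ $\mu(N)\ge 3$. The work splits into (i) embedding lemmas showing that any copy of $\E{T_\omega}$ inside $T_\omega$ must equal $\E{T_\omega}$, so that $N$ really does fix a unique point (this uses $T_\omega=\N{T}{N}$, which comes from pseudo-maximality), and (ii) the fact that a central product of quasisimple groups with isomorphic simple quotients has $\mu\le 2$, which ultimately rests on the classification of coprime maximal factorisations of simple groups. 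Your Sylow-normaliser idea is used, but only for parabolics and a few torus-normaliser cases; it is not the generic tool.

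Two smaller issues. First, your O'Nan--Scott reduction is looser than what is actually needed: the paper imports the full reduction to Theorem~A (simple $T$ with pseudo-maximal stabiliser) from~\cite{DGPS}, and in particular the TW case does not fit your ``product structure $\Delta^k$'' template, since the socle acts regularly and $G_\omega$ sits entirely in the top group. Second, in the SD case the suborbit lengths are orbits of $G_\omega$ on $T\setminus\{1\}$, which fuse conjugacy classes under $\Out(T)$ and inversion; the statement that a nonabelian simple group has no three pairwise coprime non-trivial class sizes is not an independently citable fact --- it is essentially the SD case itself, and the paper handles it (via~\cite{DGPS}) by the same $\mu$-style arguments rather than by any separate class-size theorem.
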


This theorem is related to a classical result on finite primitive
groups. In~1935 Marie Weiss~\cite[Theorem~$3$]{Piter} showed
that, if $G$ is a finite primitive group which is not cyclic of prime
order, then the largest of the subdegrees has non-trivial divisors in common
with all the other non-trivial subdegrees. It was observed by Peter Neumann in 1973~\cite[Corollary~$(2)$, page~93]{Piter} that Weiss's theorem implies that a
finite primitive group with $k$ pairwise coprime non-trivial
subdegrees has rank at least $2^k$.
Neumann remarked that `groups of small rank with non-trivial co-prime subdegrees appear to be rather rare', and posed a question of Peter Cameron~\cite[Problem~$1$,  page~$93$]{Piter} on the existence of a primitive rank $4$ group with two coprime non-trivial subdegrees, that is to say, a group meeting the bound $2^k$ with $k=2$. That no such group exists was verified by Cameron himself (see~\cite[Remark in Section~$1.32$]{Peter}), using the finite simple group classification. The smallest
rank for coprime subdegrees to occur is  $5$ with the famous example of
$J_1$ of degree $266$ with subdegrees $1,11,12,110$ and $132$ first
studied by Livingstone~\cite{Livingstone}.
Our Theorem~\ref{theorem} shows that the parameter $k$ is at most $2$. We emphasise that a primitive group may have several pairs of coprime non-trivial subdegrees -- examples are given in \cite[Example 4.3]{DGPS}. Our result simply prohibits triples of pairwise coprime subdegrees.

We say that a subgroup $L$ of a
nonabelian simple group $T$ is  
\emph{pseudo-maximal} in $T$ if there exists an almost simple group $A$ with
socle $T$ and a maximal subgroup $M$ of $A$ with $T\nsubseteq M$ and $L = T\cap
M$ (see ~\cite[Definition~$1.8$]{DGPS}). Theorem~$1.10$ in~\cite{DGPS}  shows that Theorem~\ref{theorem}
holds true if the following result on nonabelian simple groups
(see~\cite[Theorem~$1.9$]{DGPS}) is true.

\begin{conjAA}\label{conj3}Let $T$ be a  transitive nonabelian simple
  permutation group and assume that the stabilizer of a point is
  pseudo-maximal in $T$. Then the largest subset of pairwise coprime
  non-trivial subdegrees of $T$ has cardinality at most $2$.
\end{conjAA}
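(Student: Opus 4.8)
The plan is to argue by contradiction, using the classification of finite simple groups to run through the possibilities for the pair $(T,H)$. Suppose then that $T$ is a finite nonabelian simple group acting transitively with a pseudo-maximal point stabiliser $H=T\cap M$, and that $T$ has three pairwise coprime non-trivial subdegrees $d_1,d_2,d_3$. Write $n=|T:H|$ for the degree. Each $d_i$ equals $|H:H\cap H^{g_i}|$ for some $g_i\in T$, so $d_i\mid|H|$; as the $d_i$ are pairwise coprime this gives $d_1d_2d_3\mid|H|$, hence $|H|\ge d_1d_2d_3$. Moreover, for $i\ne j$ the subgroups $H\cap H^{g_i}$ and $H\cap H^{g_j}$ have coprime index in $H$, and subgroups of coprime index in a group have product equal to the whole group, so the coprimality hypothesis already forces several proper factorisations $H=(H\cap H^{g_i})(H\cap H^{g_j})$ of the point stabiliser. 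When $H$ is maximal in $T$ the action on $T/H$ is primitive, and then the Weiss--Neumann bound quoted above forces the rank of $T$ to be at least $2^3=8$, eliminating all primitive actions of rank at most $7$; the few pseudo-maximal $H$ that are not maximal in $T$ (the novelties) are treated by relating the $H$-suborbits on $T/H$ to the actions of $T$ on $T/K$ and of $K$ on $K/H$ for a maximal overgroup $K$ of $H$.

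The engine of the case analysis is a $p$-local argument. For each $i$ fix a prime $p_i\mid d_i$; since the $d_i$ are pairwise coprime the $p_i$ are distinct. Because a $p$-group acting on a set of size coprime to $p$ has a fixed point, whenever $P$ is a $p_j$-subgroup of $H$ with $j\ne i$ it fixes a point of the suborbit $\Delta_i$ of length $d_i$; equivalently $P\le H\cap H^g$ for some $g$ with $|H:H\cap H^g|=d_i$, so $P^{g^{-1}}\le H$. Combined with Sylow's theorem---in particular when $p_j\nmid n$, so that $H$ contains a full Sylow $p_j$-subgroup of $T$---this tightly links the primes $p_i$, the degree $n$, the order $|H|$ and the local structure of $T$. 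These are exactly the tools developed in \cite{DGPS} for two coprime subdegrees, and the three-subdegree hypothesis makes them bite much harder.

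Now apply the classification of finite simple groups. If $T=\Alt(m)$, the pseudo-maximal subgroups are described by the O'Nan--Scott theorem: for the intransitive and imprimitive types the actions are on $k$-subsets and on partitions, whose subdegrees are explicit products of binomial coefficients sharing obvious common prime factors, so a short combinatorial argument forbids three pairwise coprime ones; the primitive-type maximal subgroups are small and are killed at once by $|H|\ge d_1d_2d_3$ together with the rank being at least $8$. If $T$ is sporadic one runs through the finitely many primitive actions with pseudo-maximal stabiliser, reads the subdegrees from the known maximal-subgroup and character-table data, and checks directly that no three are pairwise coprime. The bulk of the argument, and the main obstacle, is $T$ of Lie type over $\mathbb{F}_q$ with $q=p^f$. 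Here Aschbacher's theorem (and the Liebeck--Seitz analogue for the exceptional groups) splits $H$ into the geometric classes $\mathcal{C}_1,\dots,\mathcal{C}_8$ and the almost-simple class $\mathcal{S}$. In the parabolic case the suborbits are governed by the Bruhat decomposition, and every non-trivial subdegree is divisible by $p$, so there cannot be even two coprime ones. For the other geometric classes the subdegrees are computed or tightly estimated as polynomials in $q$ and shown either to share a prime factor or to contradict $d_1d_2d_3\mid|H|$ once the known orders of these subgroups are used. The genuinely hard part is class $\mathcal{S}$ together with the small-dimensional classical groups, where $H$ is itself almost simple: there I would combine Liebeck's bound $|H|<|T|^{1/3}$ (treating its explicit exceptions separately), the Liebeck--Praeger--Saxl classification of factorisations of almost simple groups applied to the factorisations of $H$ obtained above, and, for the unavoidable finite list of small cases, explicit computation of the double-coset data, following the template of \cite{DGPS}. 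Assembling all these eliminations contradicts the existence of $d_1,d_2,d_3$ and proves Theorem~A.
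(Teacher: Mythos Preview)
Your overall architecture---CFSG, then Aschbacher/Liebeck--Seitz for Lie type, parabolic via $p\mid d_i$, sporadics by inspection---matches the paper, and the observation that three coprime subdegrees yield three proper subgroups $H\cap H^{g_i}$ of $H$ with pairwise coprime indices is exactly the right starting point. The genuine gap is that you never convert this observation into a usable structural statement. The paper's engine is the pair of embedding results (Propositions~\ref{emb1} and~\ref{emb2} via Lemmas~\ref{lemmaemb0}--\ref{lemmaemb2}): one identifies a characteristic subgroup $N\unlhd T_\omega$ (typically $\E{T_\omega}$ or the last term of the derived series) and proves, by a self-embedding rigidity argument, that any $T$-conjugate of $N$ landing inside $T_\omega$ must equal $N$, whence $N$ fixes a unique point; then Lemma~\ref{id2} reduces the problem to the purely group-theoretic bound $\mu(N)\le 2$, and Lemma~\ref{id1} delivers that bound whenever $N/\Z N$ is a power of a single simple group. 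This sidesteps any computation of subdegrees.

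Your substitutes for this engine do not hold up. For $\Alt(n)$ on $k$-subsets the suborbit lengths $\binom{k}{i}\binom{n-k}{k-i}$ do \emph{not} ``obviously share common prime factors''---there is no short combinatorial lemma here, and pairs of coprime subdegrees do occur; the paper instead takes $N=\Alt(n-k)$, which visibly fixes a unique $k$-set, and invokes $\mu(\Alt(n-k))\le 2$. For primitive-type stabilisers in $\Alt(n)$ the claim that they are ``small'' is false in the relevant sense (product-action stabilisers are large and the rank can be enormous), so $|H|\ge d_1d_2d_3$ gives nothing; the paper uses Proposition~\ref{emb1} for nonabelian socle and a separate parabolic-factorisation argument for affine socle. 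For classical groups in $\mathcal{C}_2$--$\mathcal{C}_8$ the subdegrees are not computable ``as polynomials in $q$'' in any practical way; the paper again reads off the component structure of $T_\omega$ from Kleidman--Liebeck and applies Propositions~\ref{emb1}/\ref{emb2} or, in the residual small cases, Lemma~\ref{lucky} together with the coprime-factorisation tables of~\cite{DGPS}. Finally, for class $\mathcal{S}$ the bound $|H|<|T|^{1/3}$ yields only $d_1d_2d_3<|T|^{1/3}$, which is no contradiction, and the $H\cap H^{g_i}$ are not maximal in $H$, so LPS does not apply directly; the paper dispatches $\mathcal{S}$ in one line because $\E{T_\omega}$ is simple with trivial centraliser, so Proposition~\ref{emb1} applies immediately. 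In short, you are missing the self-embedding/$\mu(N)$ mechanism, and without it the case analysis cannot be completed.
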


The aim of this paper is to prove Theorem~A using the Classification of Finite Simple Groups, thus proving Theorem~\ref{theorem}. The structure of the paper is straighforward. In
Section~\ref{embedding results} and~\ref{aux} we collect some
auxiliary results. We prove Theorem~A for the alternating groups
in Section~\ref{alt}, for the classical groups in
Section~\ref{classical}, for the exceptional groups of Lie type in
Section~\ref{exceptionalLie} and  finally for the sporadic simple
groups in Section~\ref{sporadicsec}. 

One of the most efficient methods for analyzing a
finite  primitive permutation group $G$ is to study the \emph{socle} $N$ of
$G$, that is, the subgroup generated by the minimal normal subgroups
of $G$. The O'Nan-Scott theorem describes in detail the embedding of
$N$ in $G$ and collects 
some useful information on the action of $N$. In~\cite{Pr} eight types
of primitive groups are defined (depending on the structure and on the
action of the socle), namely HA (\textit{Holomorphic Abelian}), AS
(\textit{Almost Simple}), SD
(\textit{Simple Diagonal}), CD
(\textit{Compound Diagonal}), HS
(\textit{Holomorphic Simple}), HC
(\textit{Holomorphic Compound}), TW 
(\textit{Twisted wreath}), PA
(\textit{Product Action}), and it is shown in~\cite{LPS1} that every
primitive group 
belongs to exactly one of these
types. Combining~\cite[Theorem~$1.3$,~$1.4$]{DGPS} with 
Theorem~\ref{theorem}, we have the following corollary determining the
maximal number of non-trivial pairwise coprime subdegrees  of a primitive group
according to its O'Nan-Scott type. 

\begin{corollary}\label{theCor}
Let $G$ be a finite primitive group. If $G$ has two non-trivial coprime
subdegrees, then $G$ is of AS, PA or TW type. 
\end{corollary}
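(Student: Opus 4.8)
The plan is to run down the eight O'Nan--Scott types and, for each, read off the cardinality of the largest subset of pairwise coprime non-trivial subdegrees. Theorem~\ref{theorem} already supplies the universal upper bound $2$, so the entire content of the corollary is the negative statement that a primitive group of type HA, SD, CD, HS or HC admits no two coprime non-trivial subdegrees; the types AS, PA, TW are then precisely the ones that can survive when two coprime non-trivial subdegrees are present.

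Thus the one thing to do is to dispose of HA, SD, CD, HS, HC. In each of these the socle $N$ of $G$ is either elementary abelian --- in which case $G = V\rtimes G_0$ with $G_\omega=G_0\le\GL(V)$ irreducible and the subdegrees are the $G_0$-orbit lengths on $V\setminus\{0\}$ --- or a power $T^k$ of a nonabelian simple group $T$ with $N_\omega$ acting diagonally, a situation in which the O'Nan--Scott theorem pins down $N_\omega$ (a full or subdiagonal copy of $T$, or a product of such) together with its action by conjugation, so that the subdegrees are governed by conjugacy-class sizes in $T$. For all five types I would simply invoke \cite[Theorems~$1.3$ and~$1.4$]{DGPS}: depending on their precise formulation, these either show outright that any two non-trivial subdegrees of such a $G$ share a common prime divisor, or they show that two coprime non-trivial subdegrees in such a $G$ would force three pairwise coprime non-trivial subdegrees --- in which case Theorem~\ref{theorem} yields an immediate contradiction. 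In both readings, no primitive group of type HA, SD, CD, HS or HC has two coprime non-trivial subdegrees, and hence any primitive $G$ that does must be of type AS, PA or TW, which is the assertion. (For context, all three surviving types are genuinely needed: AS occurs for $J_1$ of degree $266$, PA for product actions assembled from it, and TW for suitable twisted wreath constructions, so the value $2$ is attained in each.)

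I do not expect a real obstacle here: the corollary is a bookkeeping consequence of \cite[Theorems~$1.3$,~$1.4$]{DGPS} together with Theorem~\ref{theorem}, and the only care needed is to match the exact hypotheses of the cited theorems --- whether they are phrased for the socle $N$ or for $G$, and whether they exclude coprime pairs directly or only coprime triples --- with the precise point at which Theorem~\ref{theorem} is brought in. The least mechanical input is the HA case, where ruling out coprime subdegrees for an arbitrary irreducible linear group $G_0$ acting on $V\setminus\{0\}$ is the substantive part; but this is exactly what \cite{DGPS} already provides, so no new argument is required.
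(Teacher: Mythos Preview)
Your proposal is correct and matches the paper's own treatment: the paper states the corollary as an immediate consequence of \cite[Theorems~$1.3$,~$1.4$]{DGPS} together with Theorem~\ref{theorem}, with no further argument given. Your reading of how the ingredients combine---\cite{DGPS} eliminating the types HA, SD, CD, HS, HC, and Theorem~\ref{theorem} supplying the global bound $2$ for the surviving types---is exactly the intended one.
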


Results concerning the subdegrees of a finite permutation group can often give interesting applications in Field Theory (see for example~\cite[Corollary~$1.9$]{DGPS}). In fact, Theorem~\ref{theorem} has the following surprising application.

\begin{corollary}\label{corA}
Let $K = k[\theta]$ be a minimal separable field extension that is not
Galois. Let $f (x) \in k[x]$ be the minimal polynomial of $\theta$ over $k$ and write $f (x) =(x - \theta)g_1 (x)\cdots g_r (x)$ with $g_i \in K[x]$  irreducible over $K$, for each $i\in \{1,\ldots,r\}$. Then the maximal number of $g_i (x)$ of pairwise coprime degree is $2$.
\end{corollary}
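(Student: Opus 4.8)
The plan is to translate the statement into the language of permutation groups via the Galois correspondence, and then to apply Theorem~\ref{theorem}. Let $\Omega$ be the splitting field of $f$ over $k$; since $K/k$ is separable, so is $\Omega/k$, so $\Omega/k$ is Galois. Put $G=\Gal(\Omega/k)$ and let $\Delta$ be the set of roots of $f$ in $\Omega$, so $|\Delta|=\deg f$. As $f$ is irreducible over $k$ (being the minimal polynomial of $\theta$), $G$ acts faithfully and transitively on $\Delta$. Let $H=G_\theta=\Gal(\Omega/K)$. By the fundamental theorem of Galois theory, the fields strictly between $k$ and $K$ correspond to the subgroups strictly between $H$ and $G$; since $K/k$ has no proper intermediate field, $H$ is a maximal subgroup of $G$, i.e.\ $G$ is a finite primitive permutation group on $\Delta$. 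Moreover $K/k$ is Galois if and only if $H\trianglelefteq G$, so the hypothesis that $K/k$ is not Galois gives $H\ntrianglelefteq G$; in particular $H\neq 1$, so $G$ is not regular. (This is exactly where the non-Galois hypothesis is essential: for a regular $G$ every $g_i$ would be linear and their number unbounded.)

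Next I would identify the $\deg g_i$ with the non-trivial subdegrees of $G$. For any intermediate field, two roots of $f$ lie in the same $H$-orbit on $\Delta$ precisely when they are $K$-conjugate, and the irreducible factor of $f$ over $K$ having a given root $\alpha$ is $\prod_{\beta\in H\alpha}(x-\beta)$, of degree $|H\alpha|$; thus the irreducible factors of $f$ over $K$ are in degree-preserving bijection with the $H$-orbits on $\Delta$. The orbit $\{\theta\}$ yields the factor $x-\theta$, and the remaining factors are the $g_i$, with $\deg g_i$ equal to the length of the corresponding $H$-orbit on $\Delta\setminus\{\theta\}$. I claim no $g_i$ is linear. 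If $H$ fixed some $\theta'\neq\theta$, then $H\leq G_{\theta'}=H^g$ for the $g\in G$ with $g\theta=\theta'$, hence $H=H^g$ and $g\in\N G H$; but $H$ is maximal and non-normal, so $\N G H=H$, forcing $g\in H$ and $\theta'=\theta$, a contradiction. Hence every $H$-orbit other than $\{\theta\}$ has length at least $2$, and $\{\deg g_1,\dots,\deg g_r\}$ is exactly the multiset of non-trivial subdegrees of the primitive group $G$ on $\Delta$.

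Finally, a collection of $g_i$'s with pairwise coprime degrees has in particular distinct degrees, which then form a set of pairwise coprime non-trivial subdegrees of $G$, and conversely every such set of subdegrees arises in this way. Applying Theorem~\ref{theorem} to the finite primitive group $(G,\Delta)$, the largest set of pairwise coprime non-trivial subdegrees has cardinality at most $2$, so the maximal number of $g_i$ of pairwise coprime degree is at most $2$. That the value $2$ is attained follows from $J_1$: realising $J_1$ as the Galois group of an extension $\Omega/k$ (possible, e.g.\ over a rational function field) and taking $K$ to be the fixed field of a maximal subgroup $\PSL_2(11)$ produces a minimal, non-Galois extension whose associated $g_i$ have degrees $11,12,110,132$, two of which ($11$ and $12$) are coprime. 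The only genuinely substantive input is Theorem~\ref{theorem}; the rest is the Galois dictionary above, and the one point that needs care is the bookkeeping that rules out linear $g_i$ (so that the $\deg g_i$ really are the non-trivial subdegrees) and makes the translation of ``pairwise coprime degree'' exact.
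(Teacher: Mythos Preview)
Your proof is correct and follows essentially the same approach as the paper: pass to the Galois closure, identify the degrees $\deg g_i$ with the non-trivial subdegrees of the primitive action of $G=\Gal(\Omega/k)$ on the roots (equivalently on $G/H$), and invoke Theorem~\ref{theorem}. You are more careful than the paper in explicitly verifying that no $g_i$ is linear (via $\N G H=H$), and you additionally address the sharpness of the bound by realising $J_1$ over a function field---this goes beyond what the paper's own proof establishes, which only shows ``at most $2$''.
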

\begin{proof}
Let $L$ be the normal closure of $K/k$. As $K$ is separable, $L/k$ is a Galois extension. Let $G$ be the Galois group $\Gal(L/k)$ and set $H =
\Gal(L/K)$. By the minimality of $K$, the group $G$ acts primitively on the right cosets $G/H$ of $H$ in $G$. The degrees of the $g_i(x)$ are precisely the non-trivial subdegrees of $H$ in
the action on $G/H$. Now apply Theorem~\ref{theorem}.
\end{proof}

\section{Embedding results}\label{embedding results}

The main results in this section are Propositions~\ref{emb1}
and~\ref{emb2}, which will prove to be important in the proof of
Theorem~\ref{theorem}.

\begin{definition}\label{def}{\rm If $G$ is a finite group, we let $\mu(G)$
    denote the maximal size of a set $\{G_i\}_{i\in I}$ of proper
    subgroups of $G$ with $|G:G_i|$ and $|G:G_j|$ relatively prime,
    for each two distinct elements $i$ and $j$ of $I$.} 
\end{definition}
The following remark is used in Section~\ref{aux}.

\begin{remark}{\rm
The number $\mu(G)$ equals the maximal size of a set $\{M_j\}_{j\in J}$ of maximal subgroups of $G$ with $|G:M_i|$ and $|G:M_j|$ relatively prime, for each two distinct elements $i$ and $j$ in $J$. Clearly, $|J|\leq \mu(G)$. Conversely, let $\{G_i\}_{i\in I}$ be a family of  proper subgroups of $G$ with relatively prime index in $G$. Let $M_i$ be a maximal subgroup of $G$ with $A_i\leq M_i$. Since $|G:G_i| $ is coprime to $|G:G_j|$ for $i\neq j$, we have $M_i\neq M_j$ and $|G:M_i|$ is coprime to $|G:M_j|$. Thus $\mu(G)\leq |J|$.}
\end{remark}

We recall that a finite group $E$ is said to be {\em quasisimple} if
$E=[E,E]$ and $E/\Z E$ is a nonabelian simple group, where $\Z E$ is
the centre of the group $E$. Furthermore, we say that the finite group
$G$ is a {\em
  central product} of $A$ and $B$, if $A$ and $B$ are non-identity
proper subgroups of $G$
with $[A,B]=1$ and $G=AB$. We recall that a {\em component} $E$ of $G$ is a
quasisimple subnormal subgroup of $G$. As usual, we denote by $\E G$
the group generated by the set $\{E_1,\ldots,E_\ell\}$ (possibly empty) of
components of $G$, by $\F G$ the Fitting subgroup of $G$ and by $\FF
G=\F G\E G$ the generalized Fitting subgroup of $G$. If is
well-known~\cite[Chapter~$11$]{Aschbacher} that $\E G=E_1\cdots
E_\ell$ is a central product of 
$E_1,\ldots,E_\ell$ (here $\E{G}=1$ if $G$ has no components), that
$[\F G,\E G]=1$, that $\E{\E G}=\E G$ and 
that $\cent G{\FF G}\leq \FF G$. 

\begin{lemma}\label{id1}
Suppose that $K=E_1\cdots E_\ell$ is a central product of $\ell$
quasisimple groups with $E_i/\Z {E_i}\cong E_j/\Z{E_j}$, for each
$i,j\in \{1,\ldots,\ell\}$. Then $\mu(K)\leq 2$. 
\end{lemma}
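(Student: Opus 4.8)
The plan is to reduce the claim about the central product $K=E_1\cdots E_\ell$ to a statement about the simple quotient $S=E_i/\Z{E_i}$ and its wreath-product overgroup, and then to exploit the fact that all the factors are isomorphic. First I would observe that $\mu(K)\le \mu(K/\Z K)$, since every proper subgroup of $K$ of index coprime to the others pulls back from, or pushes down to, a subgroup of $K/\Z K$ with the same index; more precisely, a subgroup $G_i$ of index coprime to $|K/\Z K|$-type obstructions must contain $\Z K$ (as $\Z K$ is nilpotent and a Sylow argument forces $\Z K\le G_i$ once we pass to a maximal subgroup of coprime index), so by the Remark we may replace $K$ by $\bar K = K/\Z K \cong S^\ell$. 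Thus it suffices to prove $\mu(S^\ell)\le 2$ for $S$ a nonabelian simple group.

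Next I would analyse maximal subgroups of $\bar K=S_1\times\cdots\times S_\ell$ (now a direct product of isomorphic simple groups). A maximal subgroup $M$ of $S^\ell$ is either of ``product type'', $M = M_0\times \prod_{j\ne i}S_j$ with $M_0$ maximal in the $i$-th factor $S_i$, in which case $|S^\ell:M| = |S_i:M_0|$ divides $|S|$; or of ``diagonal type'', where $M$ is the full preimage of a diagonal subgroup under a partition of the $\ell$ coordinates into blocks, in which case $|S^\ell:M|$ is a power of $|S|$ times a combinatorial factor and in particular is divisible by $|S|$ (since at least one block has size $\ge 2$, giving index divisible by $|S|$). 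In either case, the index of a maximal subgroup of $S^\ell$ is divisible by some prime $p$ dividing $|S|$. Now if $M_1,M_2,M_3$ were maximal subgroups of pairwise coprime index, their indices would be pairwise coprime positive integers each divisible by a prime dividing $|S|$; but $|S|$ has a prime factorisation, and by Burnside's theorem (or just the fact that $|S|$ is not a prime power for $S$ nonabelian simple) this does not immediately force a contradiction — so the key point is sharper: I need that the index of a coprime-index maximal subgroup must be divisible by the \emph{largest} prime, or better, I should use the classical theorem of Weiss/Neumann-style counting or the more elementary fact underlying it.

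Here is the cleaner route I would actually take: for a nonabelian simple group $S$, if $M_0$ is a maximal subgroup of $S$ then $|S:M_0|$ is divisible by at least two distinct primes unless $S$ acts $2$-transitively of prime-power degree, and even then a counting argument shows at most $2$ maximal subgroups of $S$ can have pairwise coprime index — this is exactly the statement $\mu(S)\le 2$, which is itself a consequence of Weiss's theorem as cited in the introduction (a simple group is primitive on $S/M_0$, not cyclic of prime order, so the largest subdegree shares a factor with every subdegree; translating, one cannot have three pairwise coprime indices). So first establish $\mu(S)\le 2$ for $S$ simple. Then for $S^\ell$: given maximal subgroups $M_1,M_2,M_3$ of pairwise coprime index, each $|S^\ell:M_r|$ is divisible by a prime dividing $|S|$, and in the product/diagonal dichotomy above, a product-type $M_r$ has index $|S:M_0^{(r)}|$ dividing $|S|$ while a diagonal-type $M_r$ has index divisible by $|S|$. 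At most one of the three can be diagonal-type (two diagonal ones would have indices both divisible by $|S|$, hence not coprime since $|S|$ is not $1$), so at least two are product-type, say $M_1,M_2$, supported on factors $i_1,i_2$. If $i_1=i_2$ then $|S:M_0^{(1)}|$ and $|S:M_0^{(2)}|$ are coprime indices of maximal subgroups of the single copy $S_{i_1}$, and together with the third (whose index is divisible by some prime of $|S|$, hence by a prime dividing one of $|S:M_0^{(r)}|$ — this needs the observation that every prime dividing $|S|$ divides $|S:M_0|$ for \emph{some} maximal $M_0$, but more to the point divides the relevant ones) we get three pairwise coprime maximal-subgroup indices inside $S$, contradicting $\mu(S)\le 2$. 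If $i_1\ne i_2$, then since the $S_j$ are pairwise isomorphic and $|S|$ is not a prime power, $|S:M_0^{(1)}|$ and $|S:M_0^{(2)}|$ both divide $|S|$ yet are coprime; combined with $M_3$ whose index is divisible by a prime $p\mid |S|$, and $p$ must then be coprime to both $|S:M_0^{(1)}|$ and $|S:M_0^{(2)}|$, so $p$ divides $|S|$ but $p\nmid |S:M_0^{(1)}|$ and $p\nmid |S:M_0^{(2)}|$ — choosing $M_3$ to be, if it is product-type on factor $i_3$, gives yet another coprime index in $S_{i_3}\cong S$, and one checks the three indices $|S:M_0^{(1)}|, |S:M_0^{(2)}|, |S:M_0^{(3)}|$ transported into a common copy of $S$ violate $\mu(S)\le 2$; if $M_3$ is the unique diagonal one, its index is divisible by $|S|$ hence by every prime of $|S:M_0^{(1)}|$, contradicting coprimality directly. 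In all cases we reach a contradiction, so $\mu(S^\ell)\le 2$ and hence $\mu(K)\le 2$.

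The main obstacle I anticipate is the bookkeeping in the diagonal-type maximal subgroups and making the ``common copy of $S$'' transport argument airtight — specifically, ensuring that when three product-type maximal subgroups are supported on (possibly distinct) factors, their indices, all of which divide $|S|$ and are pairwise coprime, really do yield three pairwise coprime indices of maximal subgroups of a \emph{single} simple group $S$ so that $\mu(S)\le 2$ applies. This works because the isomorphisms $S_i\cong S_j$ let us pull each $M_0^{(r)}$ back to a maximal subgroup of a fixed $S$ with the same index, and pairwise-coprimeness is preserved. The cleanest packaging is therefore: (i) $\mu(K)\le\mu(K/\Z K)$; (ii) every maximal subgroup of $S^\ell$ has index divisible by a prime dividing $|S|$, and the product-type ones have index dividing $|S|$ while there is at most one non-product-type in any coprime family; (iii) transport to a single $S$ and invoke $\mu(S)\le 2$, which follows from Weiss's theorem. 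I expect step (ii)'s diagonal-type case and step (iii)'s transport to need the most care, but neither is deep.
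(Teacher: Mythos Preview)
Your overall strategy --- reduce to $K/\Z K\cong T^\ell$ and then bound $\mu(T^\ell)$ --- matches the paper's. The paper's reduction is cleaner than your ``Sylow argument'': since $K$ is a central product of quasisimple groups, $K=[K,K]$; hence if some proper $A_i$ satisfied $A_i\Z K=K$, then $K=[A_i\Z K,A_i\Z K]=[A_i,A_i]\le A_i$, a contradiction. This perfectness observation is the one-line justification you need for $\mu(K)\le\mu(K/\Z K)$; your Sylow sketch does not actually force $\Z K\le A_i$ (for different primes $p$ it could be different $A_i$ that fail to contain the Sylow $p$-part of $\Z K$).

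The genuine gap is your derivation of $\mu(S)\le 2$ for a nonabelian simple group $S$. You claim this follows from Weiss's theorem, but Weiss's theorem concerns the \emph{subdegrees} of a primitive group --- the orbit lengths $|M_0:M_0\cap M_0^g|$ of a point stabilizer $M_0$ in a fixed primitive action --- and says nothing about the indices $|S:M_i|$ of a varying family of maximal subgroups $M_i$. These are entirely different invariants, and there is no ``translation'' of the kind you suggest. The statement $\mu(T^\ell)\le 2$ (and already the case $\ell=1$) is exactly the content of \cite[Lemma~5.2]{DGPS}, which the paper simply cites; that result rests on the classification of coprime maximal factorizations of simple groups and ultimately on CFSG, and is not elementary. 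Your reduction of $\mu(T^\ell)\le 2$ to $\mu(T)\le 2$ via the product/diagonal dichotomy for maximal subgroups of $T^\ell$ is essentially sound, but it does not help: $\mu(T)\le 2$ is already the hard step, and your proposed proof of it is invalid.
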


\begin{proof}
As $E_i$ is quasisimple for each $i\in \{1,\ldots,\ell\}$, we have
$K=[K,K]$ and $K/\Z K=T_1\times \cdots \times T_\ell$ with $T_i=E_i\Z
K/\Z K$. By hypothesis there is a nonabelian simple group $T$ such
that, for each $i\in \{1,\ldots,\ell\}$, we have $T_i\cong
T$. We argue by
contradiction and we assume that $\mu(K)\geq 3$, that is, $K$ has
three proper subgroups $A_1$, $A_2$ and $A_3$ with $|K:A_1|$,
$|K:A_2|$ and $|K:A_3|$ relatively prime. Assume first that $A_i\Z
K< 
K$,  for each $j\in \{1,2,3\}$. So, $A_1 \Z K/\Z K$, $A_2 \Z K/\Z K$ and
$A_3 \Z K/\Z K$ are three proper subgroups of $K/\Z K$ with relatively
prime indices, that is, $\mu(K/\Z K)\geq 3$. Now,
from~\cite[Lemma~$5.2$]{DGPS} we have $\mu(T^\ell)\leq 2$, and hence we obtain a contradiction. This shows that $K=A_{j_0}\Z
K$, for some $j_0\in \{1,2,3\}$. Now, we have $K=[K,K]=[A_{j_0}\Z
K,A_{j_0}\Z K]=[A_{j_0},A_{j_0}]\leq A_{j_0}$, but this
contradicts the fact that $A_{j_0}$ is a proper subgroup of $K$.  
\end{proof}

\begin{lemma}\label{id2}
Let $G$ be a transitive permutation group on $\Omega$ and let $\omega\in\Omega$. Suppose 
that $N$ is normal in $G_\omega$ and $N$ fixes a unique point on
$\Omega$. Then the maximal size of a subset of pairwise coprime non-trivial subdegrees of $G$ is at most
$\mu(N)$. 
\end{lemma}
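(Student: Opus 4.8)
The plan is to relate the subdegrees of $G$ to the indices of subgroups of $N$ via the double coset correspondence. First I would fix the point $\omega$ and recall that the non-trivial subdegrees of $G$ are the lengths of the $G_\omega$-orbits on $\Omega\setminus\{\omega\}$; equivalently, if $\delta$ lies in such an orbit, its length is $|G_\omega : G_\omega\cap G_\delta|$. The key observation is that since $N\trianglelefteq G_\omega$ fixes only the point $\omega$, for every $\delta\neq\omega$ the subgroup $N$ does not fix $\delta$, so the orbit $\delta^N$ has length $>1$; moreover $N\cap G_\delta = N_{G_\omega\cap G_\delta}$... more precisely $N\cap G_\delta$ is the stabilizer in $N$ of the point $\delta$, a proper subgroup of $N$ whose index $|N : N\cap G_\delta| = |\delta^N|$ divides the subdegree $|\delta^{G_\omega}|$ (because $N\trianglelefteq G_\omega$ forces $\delta^N \subseteq \delta^{G_\omega}$, so $|\delta^N|$ divides $|\delta^{G_\omega}|$).

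Next I would take a set $\{d_1,\ldots,d_k\}$ of pairwise coprime non-trivial subdegrees of $G$, realised by points $\delta_1,\ldots,\delta_k$ with $d_i = |\delta_i^{G_\omega}|$. For each $i$ set $N_i = N\cap G_{\delta_i}$, a proper subgroup of $N$ (proper since $N$ moves $\delta_i$). Then $|N : N_i|$ divides $d_i$, so the integers $|N:N_1|,\ldots,|N:N_k|$ are pairwise coprime, and each is $>1$. Hence $\{N_1,\ldots,N_k\}$ is a family of proper subgroups of $N$ of pairwise coprime index, which by Definition~\ref{def} gives $k \leq \mu(N)$. This yields the bound.

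The one point that needs a little care — and I expect it to be the only genuine obstacle — is confirming that each $N_i$ is a \emph{proper} subgroup of $N$ and that the divisibility $|N:N_i| \mid d_i$ is exactly right. Properness is immediate from the hypothesis that $N$ fixes a unique point: since $\delta_i \neq \omega$ and $\omega$ is the only $N$-fixed point, $\delta_i$ is moved by $N$, so $N_i = N_{\delta_i} \lneq N$. For the divisibility, since $N \trianglelefteq G_\omega$, the $N$-orbit $\delta_i^N$ is a block-like subset contained in the $G_\omega$-orbit $\delta_i^{G_\omega}$, and $G_\omega$ permutes the $N$-orbits inside $\delta_i^{G_\omega}$ transitively; hence all these $N$-orbits have the same size $|N:N_i|$, and this common size divides $|\delta_i^{G_\omega}| = d_i$. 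One should also note the edge case $\mu(N) = 1$ (or even the degenerate possibility that $N$ has no proper subgroups of coprime index beyond the trivial situation), but this is automatically absorbed: if $N$ has two proper subgroups of coprime index then $\mu(N)\geq 2$, and the argument above never needs more structure than Definition~\ref{def} provides.

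Finally, I would remark that this lemma is designed to be combined with Lemma~\ref{id1}: in the intended applications $N$ will be a central product of quasisimple groups with isomorphic simple quotients (typically $N = \E{G_\omega}$ or a suitable normal subgroup thereof), so Lemma~\ref{id1} gives $\mu(N)\leq 2$, and then Lemma~\ref{id2} immediately yields that $G$ has at most two pairwise coprime non-trivial subdegrees. Thus the proof of Lemma~\ref{id2} itself is short, the content being the translation of subdegrees into coprime-index subgroups of a normal subgroup of the point stabilizer.
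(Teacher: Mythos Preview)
Your proposal is correct and follows essentially the same approach as the paper: pick representatives $\delta_1,\ldots,\delta_k$ of orbits giving pairwise coprime non-trivial subdegrees, set $N_i=N_{\delta_i}$, observe that each $N_i$ is proper (since $N$ fixes only $\omega$) and that $|N:N_i|$ divides $|\delta_i^{G_\omega}|$ (since $N\unlhd G_\omega$), and conclude $k\le\mu(N)$. The paper's proof is just the stripped-down version of what you wrote, without the additional commentary on blocks or the forward reference to Lemma~\ref{id1}.
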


\begin{proof}
Let $\omega_1,\ldots,\omega_r$ be elements of
$\Omega\setminus\{\omega\}$ with $|\omega_i^{G_\omega}|$ relatively
prime to $|\omega_j^{G_\omega}|$, for distinct elements $i$ and
$j$ in $\{1,\ldots,r\}$. For every $i\in \{1,\ldots,r\}$, set
$N_i=N_{\omega_i}$. Since $N$ fixes only the point $\omega$ of
$\Omega$, the group $N_i$ is a proper subgroup of $N$. Furthermore, as
$N\unlhd G_\omega$, the index $|N:N_i|$ divides
$|\omega_i^{G_\omega}|$. From Definition~\ref{def}, we obtain $r\leq\mu(N)$.
\end{proof}

\begin{lemma}\label{lemmaemb0}
Let $H$ be a finite group such that $\cent H{\E H}=1$ and $\E H\cong
T^\ell$ for some nonabelian simple group $T$ and for some $\ell\geq
1$. Let $F$ be a subgroup of $H$ with $F=F_1\cdots F_{\ell'}$ the
central product of $\ell'$ quasisimple groups with $F_i/\Z {F_i}\cong
T$ and $\ell'\geq \ell$.
Then $F=\E H$.
\end{lemma}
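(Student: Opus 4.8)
The plan is to argue by a chain of reductions, ending in a count of composition factors, and to deal separately with the case where $T$ is (isomorphic to) an alternating group.

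\emph{Reduction to $H=F\,\E H$.} Set $G_0:=F\,\E H$. Since $\E H\unlhd H$ and $\E H\le G_0$ we have $\E H\unlhd G_0$, and each $T_i$ (where $\E H=T_1\times\cdots\times T_\ell$) is subnormal and quasisimple in $G_0$, hence a component. Any component of $G_0$ outside $\{T_1,\dots,T_\ell\}$ centralises $\E H$, and $\F{G_0}\le\cent{G_0}{\E{G_0}}\le\cent{G_0}{\E H}\le\cent H{\E H}=1$; thus $\F{G_0}=1$ and $\FF{G_0}=\E{G_0}=\E H$ and $\cent{G_0}{\E H}=1$. Replacing $H$ by $G_0$ we may assume $H=F\,\E H$; then $H/\E H\cong F/(F\cap\E H)$ is perfect, as a quotient of the perfect group $F$. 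It now suffices to prove $H=\E H$: then $F\le\E H\cong T^\ell$, each $F_i$ is a quasisimple subgroup of $T^\ell$ and so (project to the direct factors) has trivial centre, whence $F_i\cong T$; the $F_i$ pairwise commute and, being non-abelian, are pairwise distinct and pairwise intersect trivially, so their central product is $\cong T^{\ell'}$ of order $|T|^{\ell'}\ge|T|^\ell=|\E H|$, forcing $F=\E H$.

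\emph{The action on the components.} Let $\bar P\le\Sym(\ell)$ be the image of the conjugation action of $H$ (equivalently of $F$) on $\{T_1,\dots,T_\ell\}$; as the kernel of $H\to\Sym(\ell)$ contains $\E H$, $\bar P$ is a quotient of the perfect group $H/\E H$ and hence perfect. If $\bar P=1$ then $H$ normalises each $T_i$, so $H/\E H$ embeds in $\prod_i\Out(T_i)=\Out(T)^\ell$, which is solvable; a perfect group embedding in a solvable group is trivial, so $H=\E H$ and we are done. So assume $\bar P\ne1$. Being a non-trivial perfect quotient of $F$, all of whose non-abelian composition factors are $\cong T$, and lying in a finite symmetric group, $\bar P$ forces $T$ to be a composition factor of $\Sym(\ell)$; hence $T\cong\Alt(m)$ for some $m$ with $5\le m\le\ell$. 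This completes the proof when $T$ is not an alternating group; from now on $T\cong\Alt(m)$.

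\emph{The bookkeeping.} For a finite group $X$ let $n(X)$ be the number of composition factors of $X$ isomorphic to $T$; recall that $n$ is additive on the terms of a normal series and that $n(X)\le\sum_u n(Y_u)$ whenever $X\le\prod_u Y_u$. Put $D:=F\cap\E H\unlhd F$, so $n(F)=n(D)+n(F/D)=n(D)+n(\bar P)$ (the kernel of $H/\E H\to\bar P$ being solvable). Since $F$ is a central product of $\ell'$ quasisimple groups with simple quotient $T$, $n(F)=\ell'$; since $D\le\E H\cong T^\ell$, $n(D)\le\ell$, with equality only when $D=\E H$, in which case $\E H\le F$ and we finish as in the first paragraph. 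Hence $\ell\le\ell'=n(D)+n(\bar P)$. I would then split $\{T_1,\dots,T_\ell\}$ into the components fixed by $\bar P$ and those moved by it, write $\E H=E_{\mathrm{fix}}\times E_{\mathrm{mov}}$ accordingly (both normal in $H$), and bound $n(D)$ via its images in and intersections with the two factors, using that $\bar P$ acts faithfully and fixed-point-freely on the $\ell_{\mathrm{mov}}$ moved components, so that $n(\bar P)\le\lfloor\ell_{\mathrm{mov}}/m\rfloor$. When $\bar P$ fixes some component $T_1$ one has $T_1\unlhd H$, and passing to $\cent H{T_1}$ (after factoring out its centraliser of its own layer) gives a configuration of the same shape with a strictly smaller $\ell$, so an induction on $\ell$ should reduce matters to the case that $\bar P$ is fixed-point-free on all $\ell$ components.

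\emph{The main obstacle.} The hard case is precisely the last one: $T\cong\Alt(m)$ and $F$ inducing a non-trivial perfect — possibly transitive — permutation group $\bar P$ on the $\ell$ components. One must show that the inequality $n(D)+n(\bar P)\le\ell$ is in fact strict, i.e. that the contribution $n(\bar P)\le\lfloor\ell/m\rfloor$ from the permutation action cannot be made up by $D$ carrying nearly $\ell$ composition factors $\cong T$. The delicate point is to bound $n(D)$ from above in terms of how $\bar P$ (itself a central product of quasisimple groups, each a quotient of some $F_i$ with socle $\Alt(m)$, and each moving at least $m$ of the $\ell$ components) is spread over the components: on each such block $D$ can contribute only a bounded "diagonal'' amount, giving $\ell'=n(D)+n(\bar P)<\ell$, contradicting $\ell'\ge\ell$ and so forcing $\bar P=1$ and $H=\E H$. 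A secondary nuisance is the handful of exceptional isomorphisms ($\Alt(5)\cong\PSL_2(4)\cong\PSL_2(5)$, $\Alt(6)\cong\PSL_2(9)\cong\Sp_4(2)'$, $\Alt(8)\cong\PSL_4(2)$), where $T$ is simultaneously alternating and classical, and where one must check the same count still applies.
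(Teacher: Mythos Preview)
Your reduction to $H=F\,\E H$ and the observation that $\bar P:=H/\ker$ is a perfect quotient of $F$ whose non-abelian composition factors are all $\cong T$ are correct and useful. But the step ``$\bar P$ forces $T$ to be a composition factor of $\Sym(\ell)$; hence $T\cong\Alt(m)$'' is false: composition factors of a \emph{subgroup} of $\Sym(\ell)$ are not in general composition factors of $\Sym(\ell)$. Any nonabelian simple $T$ embeds in $\Sym(d)$ for $d$ the minimal faithful permutation degree of $T$, so for $\ell\ge d$ one can perfectly well have $\bar P\cong T\le\Sym(\ell)$ with $T$ non-alternating (e.g.\ $T=\PSL_2(7)$ inside $\Sym(7)$, or $T=M_{11}$ inside $\Sym(11)$). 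So the ``non-alternating'' case is not actually disposed of, and the entire argument reduces to the situation you yourself label ``the main obstacle'' and do not resolve. Your bookkeeping inequality $\ell'\le n(D)+n(\bar P)$ together with $n(\bar P)\le\lfloor\ell/m\rfloor$ is a reasonable start (here $m=d$, the minimal degree of $T$), but as it stands it does not give the required contradiction: you need to control $n(D)$ much more tightly than the trivial bound $n(D)\le\ell$.

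The paper's proof supplies exactly the missing quantitative input, and it works uniformly for all $T$. The key point is a lemma on permutation degrees (the paper's Claim~1): if a central product $F$ of $\ell'$ quasisimple groups with $F_i/\Z{F_i}\cong T$ embeds in $\Sym(m)$, then $m\ge \ell' d\ge 5\ell'$. Applied to $\bar P\le\Sym(\ell)$ this immediately shows that \emph{some} component $F_1$ lies in the kernel $K$ of the action on $\{T_1,\dots,T_\ell\}$ (otherwise $\ell\ge 5\ell'>\ell'$). One then shows $F_1\le\E H$ by solvability of $\Out(T)$, pins $F_1$ down as a diagonal in some $T_1\times\cdots\times T_m$, and finishes by induction on $\ell$ applied inside $\cent H{F_1}$. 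The whole argument never splits into alternating versus non-alternating cases. Your outline can be rescued along these lines, but only once you replace the erroneous ``$T$ must be alternating'' step by the permutation-degree bound and then carry the induction through.
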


\begin{proof}
Write $\E {H}=T_1\times \cdots \times T_\ell\cong T^\ell$ with
$T_i\cong T$ for each $i\in \{1,\ldots,\ell\}$. As $\cent H {\E H}=1$,
the group $H$ is isomorphic to a subgroup of $\Aut(\E H)$. So,
replacing $H$ by $\Aut(\E H)$ if necessary, we may assume that
$H=\Aut(\E H)$.  

Write $E=\E H$. We argue by induction on $\ell$. Assume that
$\ell=1$. Since $\Out(T)$ 
is soluble and $F=[F,F]$, we obtain $F\leq E $ and so $F=E$. Assume
that $\ell >1$. We prove a preliminary claim from which the 
proof will follow.

\smallskip

\noindent\textsc{Claim~1. }If $F$ is a subgroup of $\Sym(m)$, then $m\geq
\ell' d$, where $d$ is the minimal degree of a faithful permutation
representation of $T$. In particular, $m\geq 5\ell'$.

\smallskip

\noindent We prove it by induction on $|F|$.  Let
$\Lambda_1,\ldots,\Lambda_k$ be the orbits of $F$ on
$\{1,\ldots,m\}$ and let  $L_j$ be the
permutation group induced by $F$ on 
$\Lambda_j$. In particular, $L_j/\Z{L_j}\cong T^{\ell_j}$ for some $0\leq
\ell_j\leq \ell'$, and $\ell'\leq \sum_{j=1}^k\ell_j$. If, for each
$j\in \{1,\ldots,k\}$, we have 
$|L_j|<|F|$, then by induction we obtain $|\Lambda_j|\geq \ell_j
d$. In particular, $m=\sum_{j=1}^k|\Lambda_j|\geq
\sum_{j=1}^{k}\ell_j d\geq \ell' d$. Therefore, we may assume that
$|F|=|L_j|$ for some $j\in \{1,\ldots,\ell\}$, that is, $F$ acts
faithfully and transitively on $\Lambda_j$. In particular, replacing the set
$\{1,\ldots,m\}$ by $\Lambda_j$ if necessary, we may assume that $F$ is
a transitive subgroup of $\Sym(m)$.

Let $\mathcal{B}$ be the system of imprimitivity
consisting of the 
orbits of $\Z F$. Let $K$ be the kernel of the action of $F$ on
$\mathcal{B}$ and let $F^\mathcal{B}$ be the permutation group induced by
$F$ on $\mathcal{B}$. Clearly,  $\Z F\leq K$. Assume that $\Z F<
K$. In particular, since $F/\Z F\cong T^{\ell'}$, there exists $i\in
\{1,\ldots,\ell'\}$ with $F_i\leq 
K$. Let $B$ be a $\Z F$-orbit and $\lambda\in B$. Since $\Z F$ is
abelian, $\Z F$ acts 
regularly on $B$ and hence $F_i=\Z F (F_i)_\lambda$. In particular,
$F_i=[F_i,F_i]=[(F_i)_\lambda,(F_i)_\lambda]=(F_i)_\lambda$ and $F_i$
fixes the point $\lambda$ of $B$. Since $F_i\unlhd F$, $F$ is
transitive and $F_j\leq F_\lambda$, we see that $F_i=1$, a
contradiction. Thus $K=\Z F$ and 
$F^{\mathcal{B}}\cong T^{\ell'}$. From~\cite[Theorem~$3.1$]{Pr1}, we have
$|\mathcal{B}|\geq \ell' d$. Therefore, $m\geq |\mathcal{B}|\geq \ell' d$.
Finally, since $\Sym(4)$ is soluble, we have $d\geq 5$.~$_\blacksquare$  
 
\smallskip

Let $K$ be the kernel of the action by conjugation of $H$ on the
set 
$\{T_1,\ldots, T_\ell\}$ of $\ell$ simple direct factors of
$E$. Clearly, $F\cap K$ is a normal subgroup of $F$. Assume that
$F\cap K\leq \Z F$. Then $FK/K\cong F/(F\cap K)$ is 
isomorphic to a subgroup of $\Sym(\ell)$ and 
hence, by Claim~1 applied to  $F/(F\cap K)$, we obtain $\ell\geq
5\ell'$, a contradiction since  by assumption $\ell'\geq \ell$. Thus $F\cap K\nsubseteq \Z{F}$. Since $F\cap
K\unlhd F$ and $F/\Z F\cong T^{\ell'}$, there 
exists $i\in \{1,\ldots,\ell'\}$ with $F_i\leq  K$. Relabelling $F_i$ by
$F_1$ if necessary, we may assume 
that $F_1\leq K$.    

Since $K/E\cong \Out(T)^\ell$, $\Out(T)$ is soluble and
$F_1=[F_1,F_1]$, we see that $F_1\leq E$.  

For each $j\in \{1,\ldots,\ell\}$, let $\pi_j:E \to T_j$ be the
projection onto the $j^\textrm{th}$ coordinate of $E$ and let $L_j$ be
the kernel of $\pi_j$. Since $F_1\leq E $ and $L_j\unlhd E
$, we have $F_1\cap L_j\unlhd F_1$  and so either
$F_1\cap L_j\leq \Z {F_1}$ or $F_1\leq L_j$. Write $J=\{j\in
\{1,\ldots,\ell\}\mid F_1\leq L_j\}$. If $J=\{1,\ldots,\ell\}$, then
$F_1\leq \cap_{j=1}^\ell L_j=1$,
a contradiction. Thus, relabelling the set $\{1,\ldots,\ell\}$ if
necessary, we may assume that $J=\{m+1,\ldots,\ell\}$ for some $m\geq
1$. Fix  $j$ in $\{1,\ldots,m\}$. Now, as $F_1\cap L_j\leq \Z{F_1}$, we have $|T|\geq
|\pi_j(F_1)|=|F_1:F_1\cap L_j|=|F_1:\Z{F_1}||\Z{F_1}:F_1\cap L_j|=|T||\Z{F_1}:F_1\cap L_j|$
and hence $\pi_j(F_1)=T_j$ and $F_1\cap
L_j=\Z{F_1}$. Since this argument does not depend on $j\in \{1,\ldots,m\}$,
we have $\Z {F_1}=F_1\cap (\cap_{j=1}^mL_j)=F_1\cap (T_{m+1}\times
\cdots \times T_\ell)$. Moreover, since for each $j\in
\{1,\ldots,m\}$ we have $T_j=\pi_j(F_1)$, we see that $F_1\leq
D\times T_{m+1}\times \cdots 
\times T_\ell$ where
$D$ is a diagonal subgroup of
$T_1\times \cdots \times T_m$, that is, $D$ is conjugate under an
element of $H$ to the diagonal subgroup $\{(t,\ldots,t)\mid t\in T\}$ of $T_1\times \cdots \times T_m$. Summing up, this
gives $F_1=D\times  
\Z{F_1}$. As $F_1=[F_1,F_1]$, we have $\Z {F_1}=1$
and $F_1=D$.

Since $H=\Aut(E)$, we have $\cent H {F_1}\cong \Sym(m)\times
\Aut(T^{\ell-m})$. Let $A$ be 
the normal subgroup of $\cent H{F_1}$ isomorphic to $\Sym(m)$ and let
$B$ be the normal subgroup of $\cent H{F_1}$ isomorphic to
$\Aut(T^{\ell-m})$. Now the group $F_2\cdots F_{\ell'}$ is contained in $\cent H
{F_1}=A\times
B$. From Claim~1, $A$ contains at most $m/5$ of the components $F_2\cdots
F_{\ell'}$. Also, by induction, we have that $B$ contains
at most $\ell-m$ of the components $F_2\cdots F_{\ell'}$ and, if equality is met
then $F_2\cdots F_{\ell'}=T_{m+1}\times \cdots\times T_{\ell}$. Therefore,
$\ell'-1\leq m/5+\ell-m$. Since $\ell'\geq \ell$, this gives
$\ell'=\ell$, $m=1$, $F_1=T_1$ and $F_2\cdots
F_{\ell'}=T_2\times\cdots\times T_{\ell}$. In particular, $F=E$.  
\end{proof}

\begin{lemma}\label{lemmaemb1}
Let $H$ be a finite group and $E = \E H$.
Assume that $\cent {H}{E}$ is
  soluble and $E /\Z {E}\cong T^\ell$ for some nonabelian
  simple  
  group $T$ and for some $\ell\geq 1$. If $f:E \to H$ is an
  injective homomorphism, then $f(E )=E$.
\end{lemma}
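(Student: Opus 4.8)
The statement follows from Lemma~\ref{lemmaemb0} after a reduction to the case where the centralizer of $E$ is trivial. First I would record that $f(E)$ is again a central product of $\ell$ quasisimple groups, each with central quotient $T$: indeed $f$ is injective, so $f(E)\cong E=E_1\cdots E_\ell$ with $E_i/\Z{E_i}\cong T$ (possibly up to a permutation of the factors, but this is irrelevant since all factors are isomorphic to $T$), and the central-product structure is preserved under isomorphism. Thus $f(E)$ has exactly the shape $F_1\cdots F_{\ell'}$ with $\ell'=\ell$ demanded by Lemma~\ref{lemmaemb0}, once we know the ambient group has trivial centralizer of its layer.

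\textbf{Reduction to $\cent H E = 1$.} Set $C=\cent H E$. By hypothesis $C$ is soluble, and $C\cap E=\Z E$ since $E/\Z E$ has trivial centre inside $E/\Z E$ — more precisely $\cent E E=\Z E$, and a component of $E$ is not centralized by $E$ unless it is central, so $\cent H E\cap E=\Z E$. Now pass to $\overline H=H/C$. Write $\overline E$ for the image of $E$; then $\overline E=\E{\overline H}$ (the layer is preserved modulo the soluble normal subgroup $C$, using $\E{\E G}=\E G$ and the fact that components map onto components of the quotient when one quotients by a normal subgroup meeting each component centrally), and $\cent{\overline H}{\overline E}=1$ because any element of $H$ centralizing $E$ modulo $C$ lies in $C E$ and hence, after adjusting by the $E$-part, in $C$. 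Also $\overline E/\Z{\overline E}\cong E/\Z E\cong T^\ell$. Since $\overline E=E/(E\cap C)=E/\Z E\cong T^\ell$, in fact $\overline E$ itself is $T^\ell$ with trivial centre. Composing $f$ with the quotient map gives a homomorphism $\overline f\colon E\to \overline H$; its image $\overline f(E)$ is the image of the central product $f(E_1)\cdots f(E_\ell)$, which is a central product of $\ell$ quasisimple groups each with central quotient $T$ (some $f(E_i)$ could a priori map into $C$, but $f(E_i)=[f(E_i),f(E_i)]$ is perfect while $C$ is soluble, so this does not happen, and $\overline f$ is injective on $E$ as well since $\Ker(\overline f)\cap E\leq C\cap f(E)$ is a perfect subgroup of a soluble group, hence trivial).

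\textbf{Applying Lemma~\ref{lemmaemb0}.} We are now exactly in the situation of Lemma~\ref{lemmaemb0} with ambient group $\overline H$, layer $\overline E\cong T^\ell$, trivial centralizer $\cent{\overline H}{\overline E}=1$, and subgroup $F=\overline f(E)$ a central product of $\ell'=\ell\geq\ell$ quasisimple groups with each central quotient $T$. The lemma yields $\overline f(E)=\overline E$. Pulling back along $H\to\overline H$, this gives $f(E)C=EC$; intersecting with $f(E)$ and using that $f(E)$ is perfect while $C$ is soluble (so $f(E)\cap C=1$, as $f(E)\cap C$ is a perfect subgroup of a soluble group) and symmetrically $E\cap C=\Z E$, a short order/correspondence argument forces $f(E)=E$. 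Concretely: $f(E)\leq EC$, and projecting $EC\to EC/C\cong E/\Z E$, the image of $f(E)$ is all of $E/\Z E$; since $f(E)$ is perfect it equals $[f(E),f(E)]$, and a perfect subgroup of $EC$ surjecting onto $E/\Z E$ with trivial intersection with $C$ must coincide with $E$ (as $E$ is the unique such, being $\E{EC}$). Hence $f(E)=E$.

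\textbf{Main obstacle.} The genuinely delicate point is \emph{not} the final identification but verifying cleanly that after quotienting by $C=\cent H E$ the hypotheses of Lemma~\ref{lemmaemb0} really hold — in particular that $\overline f$ remains injective and that $\overline f(E)$ still has the exact central-product shape with the correct number of quasisimple factors. Both rest on the same soft fact (a perfect group has no nontrivial soluble quotient or soluble normal-subgroup intersection), so once that observation is in hand the rest is bookkeeping. No new calculations with $T$, $\Out(T)$, or permutation degrees are needed here, since all the hard combinatorics has been absorbed into Lemma~\ref{lemmaemb0} and, through it, into \cite[Theorem~$3.1$]{Pr1}.
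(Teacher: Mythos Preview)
Your plan matches the paper's exactly: pass to $\overline H=H/C$ with $C=\cent H E$, invoke Lemma~\ref{lemmaemb0} there, and lift back. However, two of your supporting claims are false, and one of them breaks the final step. The repeated assertion that $f(E)\cap C$ is ``a perfect subgroup of a soluble group'' is wrong: $f(E)\cap C$ is a \emph{normal} subgroup of the perfect group $f(E)$ lying inside the soluble group $C$, hence is itself soluble, hence lies in $\Z{f(E)}$ (as $f(E)/\Z{f(E)}\cong T^\ell$ has no nontrivial soluble normal subgroup) --- but it need not be trivial. Take $\Z E\ne1$ and $f$ the identity: then $f(E)\cap C=E\cap C=\Z E$. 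The same slip appears earlier when you claim $\overline f$ is injective; in fact $\Ker\overline f\le\Z E$. That earlier slip is harmless (Lemma~\ref{lemmaemb0} only needs the central-product structure of the image, which survives a central quotient), but the later one wrecks your uniqueness argument for $f(E)=E$: your criterion ``perfect, surjects onto $E/\Z E$, trivial intersection with $C$'' is not even satisfied by $E$ itself.

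The paper's lift avoids this entirely. From $f(E)C=EC$, note that $f(E)$ is perfect, so $f(E)$ lies in every term of the derived series of $EC$; since $EC/E\cong C/\Z E$ is soluble and $E$ is perfect, that series terminates at $E$, giving $f(E)\le E$ and hence $f(E)=E$ by comparing orders. Separately, your justification of $\cent{\overline H}{\overline E}=1$ (``lies in $CE$, adjust by the $E$-part'') has a gap: there is no reason an element $h$ with $[h,E]\le C$ must lie in $CE$. The paper uses the Three Subgroup Lemma instead: writing $C'$ for the preimage of $\cent{\overline H}{\overline E}$, one has $[C',E]\le C\cap E=\Z E$, so $[[C',E],E]=[[E,C'],E]=1$, whence $[E,C']=[[E,E],C']=1$ and $C'\le C$.
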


\begin{proof}
We write $E= \E H$, $Z=\Z E$ and $\overline{H}=H/\cent H{E}$. Let
$-:H\to\overline{H}$ be the 
natural projection. Here we use the ``bar'' notation, that is, we
denote by $\overline{X}$ the image under $-$ of the subgroup $X$ of $H$.

In this paragraph we show that $\cent {\overline{H}}{\overline{E}}=1$. We have $\cent
{\overline{H}}{\overline{E}}=C/\cent H{E}$ for 
some subgroup $C$ of $H$. Since $[\overline{C},\overline{E}]=1$ and
$E\unlhd H$, we obtain $[C,E]\leq \cent H E\cap E=\Z E$. In
particular, $[[C,E],E]=1$ and $[[E,C],E]=1$. Now, from the Three Subgroup
Lemma, we have $[E,C]=[[E,E],C]=1$. Thus $C\leq \cent H E$ and
$\overline{C}=1$.

Since every component of $\overline{H}$ is either contained in
$\overline{E}$ or commutes with $\overline{E}$, and since $\cent {\overline{H}}{\overline{E}}=1$, we obtain that $\overline{E}=\E {\overline{H}}$.
 
Write $F=f(E)$ and $\overline{F}=\overline{f(E)}$. As $\cent H E$ is
soluble and $f$ is injective, we have $F\cap \cent H E\leq \Z{F}$ and
$\overline{F}\cong F/(F\cap \cent H E)$ is a central product of
$\ell$ quasisimple groups. Since $\overline{E}\cong
\overline{F}/\Z{\overline{F}}\cong T^\ell$, from
Lemma~\ref{lemmaemb0}, we 
have $\overline{F}=\overline{E}$. Therefore, $F\cent H E=E\cent H
E$. Since $F=[F,F]$, $E=[E,E]$ and $\cent H E$ is soluble, we obtain
that the last term of the derived series of $F\cent H E$ (respectively
$E\cent H E $) is $F$ (respectively $E$), that is, $F=E$. 
\end{proof}

\begin{proposition}\label{emb1}Let $G$ be a transitive permutation group on
  $\Omega$. For
  $\omega\in\Omega$, assume that $\cent{G_\omega}{\E{G_\omega}}$ is
  soluble, that $\E{G_\omega}/\Z{\E{G_\omega}}\cong T^\ell$ for some
  nonabelian simple group $T$ and for some $\ell\geq 1$, and that
  $G_\omega=\N G{\E {G_\omega}}$.  Then 
  the maximal size of a subset of  non-trivial pairwise coprime subdegrees of
  $G$ is at most $2$.   
\end{proposition}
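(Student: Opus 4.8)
The plan is to reduce Proposition~\ref{emb1} to Lemma~\ref{id2} by locating a normal subgroup of $G_\omega$ which fixes a unique point of $\Omega$, and whose $\mu$-value is at most $2$. The natural candidate is $E=\E{G_\omega}$ itself. First I would argue that $E$ fixes a unique point of $\Omega$: if $E$ fixed a second point $\omega'$, then $E\le G_\omega\cap G_{\omega'}$, and since $E=\E{G_\omega}\unlhd G_\omega$ we could use the hypothesis $G_\omega=\N G{E}$ together with the fact that $E$ is the generalized Fitting-type object canonically attached to $G_\omega$. Concretely, consider $f\colon E\to G_{\omega'}$ given by inclusion; the target has $E_{\omega'}:=\E{G_{\omega'}}$ as a subgroup-conjugate copy of $E$ (by transitivity $G_{\omega'}=G_\omega^g$ for some $g$, so $\E{G_{\omega'}}=E^g$), and Lemma~\ref{lemmaemb1} (applied with $H=G_{\omega'}$, whose $\E H$ satisfies exactly the centralizer-solubility and $T^\ell$ hypotheses because $G_{\omega'}$ is conjugate to $G_\omega$) forces $E=\E{G_{\omega'}}=E^g$. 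Then $g$ normalizes $E$, so $g\in\N G{E}=G_\omega$, whence $\omega'=\omega^{g^{-1}}$... wait, $\omega'=\omega g$ and $g\in G_\omega$ gives $\omega'=\omega$, a contradiction. So $E$ fixes only $\omega$.

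Next I would bound $\mu(E)$. Since $E=\E{G_\omega}$ is a central product $E_1\cdots E_\ell$ of quasisimple groups with $E_i/\Z{E_i}\cong T$ for all $i$ (this is precisely the content of $E/\Z E\cong T^\ell$ together with the standard structure of $\E{}$ recalled in the excerpt), Lemma~\ref{id1} gives $\mu(E)\le 2$ directly. Then Lemma~\ref{id2}, applied with $N=E$ (which is normal in $G_\omega$ and fixes the unique point $\omega$), yields that the maximal size of a set of pairwise coprime non-trivial subdegrees of $G$ is at most $\mu(E)\le 2$, which is the assertion.

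I should be a little careful about one degenerate possibility: the subgroups $A_i$ appearing in the definition of $\mu$ must be \emph{proper}, and the point-stabilizers $E_{\omega_i}=N_{\omega_i}$ in Lemma~\ref{id2} are proper precisely because $E$ fixes only $\omega$; this is why the unique-fixed-point step is genuinely needed and not merely cosmetic. Also, Lemma~\ref{lemmaemb1} requires $\cent H{\E H}$ soluble and $\E H/\Z{\E H}\cong T^\ell$; transporting the hypotheses on $G_\omega$ along the conjugating element $g$ gives these for $H=G_{\omega'}$, so the application is legitimate.

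The main obstacle is the unique-fixed-point argument, i.e.\ verifying that $E$ cannot fix two points. It would be tempting to think $E\le G_\omega\cap G_{\omega'}$ forces $E$ small, but that is not automatic; the real input is the rigidity provided by Lemma~\ref{lemmaemb1}, which says that inside $G_{\omega'}$ there is essentially only one copy of the relevant product-of-quasisimple object, namely $\E{G_{\omega'}}$, so the inclusion of $E$ must hit it exactly. Once one commits to using Lemma~\ref{lemmaemb1} in this way, together with $G_\omega=\N G{\E{G_\omega}}$ to convert ``$E^g=E$'' into ``$g\in G_\omega$'', the rest is a short chain of citations to Lemma~\ref{id1} and Lemma~\ref{id2}.
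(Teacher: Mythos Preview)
Your proposal is correct and follows essentially the same route as the paper: show that $E=\E{G_\omega}$ fixes only $\omega$ via Lemma~\ref{lemmaemb1}, then invoke Lemmas~\ref{id1} and~\ref{id2}. The only cosmetic difference is that the paper conjugates $E$ back into $G_\omega$ (applying Lemma~\ref{lemmaemb1} with $H=G_\omega$ and $f\colon x\mapsto x^{g^{-1}}$), whereas you keep $E$ inside $G_{\omega'}$ and transport the hypotheses along $g$; note that in your version the domain of $f$ in Lemma~\ref{lemmaemb1} must literally be $\E{G_{\omega'}}$, so you should precompose your inclusion with the isomorphism $\E{G_{\omega'}}\xrightarrow{\;g^{-1}\;}\E{G_\omega}$ to make the citation exact.
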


\begin{proof}  
Fix $\omega\in \Omega$ and write $E=\E {G_\omega}$. Assume that $E$
fixes an element $\omega'$ of $\Omega$. Let $g\in G$ with
$\omega^g=\omega'$. Now $E^{g^{-1}}\leq G_\omega$ and so, from
Lemma~\ref{lemmaemb1} applied with $H=G_\omega$, we have
$E^{g^{-1}}=E$  and hence $g\in
\N G E=G_\omega$. This yields $\omega'=\omega$ and hence $E$ fixes a unique
point of $\Omega$. Now the proof follows from Lemmas~\ref{id1} and~\ref{id2}.
\end{proof}

Let $G=T_1\times\cdots \times T_\ell$ be the direct product of
nonabelian simple groups. We say that $T_i$ has {\em multiplicity} $r$
in $G$, if $G$ has exactly $r$ simple direct factors isomorphic to $T_i$.
\begin{lemma}\label{lemmaemb2}
Let $H$ be a finite group. Assume that each simple direct factor of $\E
H/\Z H$ has multiplicity at most $4$ and that $H$ has a unique
component $Q$ such that  $Q/\Z{Q}$ has largest order among the components of $H$. If
$f:Q\to H$ is an injective homomorphism, then $f(Q)=Q$.
\end{lemma}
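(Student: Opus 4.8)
The plan is to reduce Lemma~\ref{lemmaemb2} to the already-proved Lemma~\ref{lemmaemb1}. First I would pass to $\overline{H}=H/\cent H{\E H}$: writing $-$ for the quotient map, the Three Subgroup Lemma argument from the proof of Lemma~\ref{lemmaemb1} shows that $\cent{\overline H}{\overline{\E H}}=1$, and hence $\overline{\E H}=\E{\overline H}$ and $\overline{\E H}\cong \E H/\Z{\E H}$. Moreover $\cent{\overline H}{\overline Q}$ contains the (images of the) components of $H$ other than $Q$, and its generalized Fitting subgroup is a product of those images, each nonabelian simple. The image $\overline Q$ is the unique component of $\overline H$ whose central quotient has largest order, since central quotients are preserved. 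So it suffices to prove the statement for $\overline H$ in place of $H$, i.e.\ we may assume $\cent H{\E H}=1$ and hence $\E H=\E H/\Z{\E H}$ is a direct product of nonabelian simple groups, with $Q=T_1$ say a simple direct factor of largest order and each isomorphism type occurring with multiplicity at most $4$.

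Next, still following the template of Lemma~\ref{lemmaemb0}, I would let $K$ be the kernel of the conjugation action of $H$ on the set of simple direct factors of $\E H$, examine $F:=f(Q)\cap K$ (a normal subgroup of $f(Q)\cong Q$, hence equal to $f(Q)$ or to $1$), and rule out $F=1$ because then $f(Q)$ would embed in $\Sym(\ell)$ where $\ell$ is the number of factors; but $Q$ is a nonabelian simple group whose order is at least that of every factor, so $|Q|\ge |T_1|\cdots$, and $Q$ cannot embed faithfully into a symmetric group on so few letters — here one uses that a minimal faithful permutation representation of a product of nonabelian simple groups each of multiplicity $\le 4$ needs degree much larger than the number of factors, essentially Claim~1 of Lemma~\ref{lemmaemb0}. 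So $f(Q)\le K$, and since $K/\E H\cong \Out(T_1)^{?}\times\cdots$ is soluble and $f(Q)=[f(Q),f(Q)]$ is perfect, we get $f(Q)\le \E H$. Then for each projection $\pi_j:\E H\to T_j$ the image $\pi_j(f(Q))$ is either trivial or all of $T_j$ (simplicity of $f(Q)$ and normality of the kernels), so $f(Q)$ is a diagonal subgroup of the product $T_{j_1}\times\cdots\times T_{j_s}$ of those $T_j$ on which it projects onto. Since the $T_j$ are pairwise isomorphic as soon as a perfect group maps onto each, all these factors are isomorphic to $Q$, and because $|Q|$ is maximal among the factors we cannot have $s\ge 2$ with $|Q|^s$-size constraints — more precisely, if $s\ge 2$ then $\cent H{f(Q)}$ contains a diagonal-type $\Sym(s)$-factor together with the remaining factors, and counting as in Lemma~\ref{lemmaemb0} (multiplicity $\le 4$, and $Q$ not embeddable in a small symmetric group) forces $s=1$. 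Hence $f(Q)=T_j$ for a single $j$; finally $|f(Q)|=|Q|$ forces $|T_j|=|Q|$, and since $Q=T_1$ is the \emph{unique} component of largest order, $T_j=Q$, i.e.\ $f(Q)=Q$.

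The one genuinely new ingredient beyond what is done in Lemma~\ref{lemmaemb0} is the uniqueness clause: in Lemma~\ref{lemmaemb0} all factors were already assumed isomorphic, whereas here I must argue that the diagonal $f(Q)$ lands in a product of factors all isomorphic to $Q$ and then use the maximality-plus-uniqueness of $Q$ to pin down which factor. The main obstacle I anticipate is the case analysis when $f(Q)$ is a diagonal subgroup spread over $s\ge 2$ isomorphic factors: one has to produce a contradiction using the bound on multiplicities ($s\le 4$) together with a lower bound on the minimal faithful permutation degree of $Q$ relative to the number of factors it is entangled with — this is exactly the kind of Claim~1 counting argument from Lemma~\ref{lemmaemb0}, but I must be careful that it still goes through when $|Q|$ is merely maximal (not strictly larger) among the factor orders, which is why the hypothesis singles out $Q$ as the \emph{unique} largest component and why multiplicities are capped at $4$ rather than left arbitrary.
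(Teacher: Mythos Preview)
Your outline has the right skeleton but misses the two places where the paper's argument is direct, and as a result you introduce complications that are not needed and that you do not actually resolve.

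First, the reduction to $\cent H{\E H}=1$ is not in the paper's proof and is not needed. The paper works directly in $H$. (Your lifting step would also require care: unlike Lemma~\ref{lemmaemb1}, the present lemma carries no solubility hypothesis on $\cent H{\E H}$, so the ``last term of the derived series'' trick used there to pass from $f(Q)\cent H{\E H}=Q\cent H{\E H}$ back to $f(Q)=Q$ is not available as stated.)

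Second, the role of the multiplicity bound is much simpler than a Claim~1 minimal-degree estimate. Because each isomorphism type among the $T_i$ occurs at most four times, every orbit of the conjugation action of $H$ on $\{T_1,\dots,T_\ell\}$ has length at most $4$; hence $H/K$ embeds in a product of copies of $\Sym(4)$ and is \emph{soluble}. Since $R:=f(Q)$ is quasisimple (hence perfect), this alone forces $R\le K$. Then $K/\E H$ embeds in $\prod_i\Out(T_i)$, again soluble, so $R\le\E H$. No appeal to permutation-degree lower bounds is required.

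Third, and this is where your proposal has a genuine gap, the uniqueness hypothesis finishes the proof in one line and there is no ``$s\ge2$'' case to analyse. For each $j$, either $R\le L_j=\ker\pi_j$ or $R\cap L_j\le\Z R$; in the second case
\[
|T_j|\ \ge\ |\pi_j(R)|\ =\ |R:R\cap L_j|\ \ge\ |R:\Z R|\ =\ |Q/\Z Q|\ =\ |T_1|,
\]
and since $Q$ is the \emph{unique} component with $|Q/\Z Q|$ maximal this forces $j=1$. Thus $R\le\bigcap_{j\ge2}L_j=E_1\Z{\E H}$, and then $R=[R,R]\le[E_1\Z{\E H},E_1\Z{\E H}]=E_1$, giving $R=E_1=Q$ by comparing orders. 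Your final worry that ``$|Q|$ is merely maximal (not strictly larger)'' is a misreading of the hypothesis: uniqueness of $Q$ means precisely that $|Q/\Z Q|>|E_i/\Z{E_i}|$ for every other component $E_i$, so no counting with $\Sym(s)$-factors or multiplicity caps is needed at this stage.
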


\begin{proof}
Write $R=f(Q)$ and $\E H=E_1\cdots E_\ell$ with $E_1,\ldots,E_\ell$
the components of $\E H$. Set $\E H/\Z {\E H}=T_1\times \cdots \times
T_\ell$ with $\ell\geq 1$ and with $T_i$ a nonabelian simple group,
for each $i\in \{1,\ldots,\ell\}$. Relabelling the index set
$\{1,\ldots,\ell\}$ if necessary, we may assume that $E_1=Q$. The
group $H$ acts by conjugation on the set $\{T_1,\ldots,T_\ell\}$ of
$\ell$ simple direct factors of $\E H/\Z{\E H}$. The kernel of the
action of $H$ on $\{T_1,\ldots,T_\ell\}$ is $K=\cap_{i=1}^\ell\N H
{T_i}$. Since $T_i$ has multiplicity at most $4$ in $\E{H}/\Z{\E H}$,
we see that $H/K$ has orbits of length at most $4$ and hence $H/K$ is
soluble.

As $R$ is quasisimple, this yields $R\leq K$. As $\Out(T_i)$ is soluble for each $i\in
\{1,\ldots,\ell\}$ and since $K/\E H$ is isomorphic to a subgroup of
$\Out(T_1)\times \cdots \times \Out(T_\ell)$, we obtain that $K/\E H$
is soluble. As $R$ is quasisimple, we get $R\leq \E H$.    

For each $j\in \{1,\ldots,\ell\}$, let $\pi_j:\E H\to T_j$ the natural
projection onto the $j^{\mathrm{th}}$ factor $T_j$ of $\E H/\Z{\E H}$
and let $L_j$ be the kernel of $\pi_j$. Since $R\cap L_j\unlhd R$, we
have that either $R\cap L_j\leq \Z R$ or $R\leq L_j$. In the former
case, $|T_j|\geq \pi_j(R)=|R:R\cap L_j|\geq |R:\Z R|=|T_1|$ and hence
$j=1$ because of the maximality and uniqueness of $|T_1|$. Therefore this yields
$R\leq \cap_{j=2}^\ell L_j=E_1\Z{\E 
  H}$. Since $R=[R,R]$, we obtain $R\leq [E_1\Z{\E H},E_1\Z{\E
    H}]=E_1$ and hence, since $f$ is injective, $R=E_1$.    
\end{proof}

\begin{proposition}\label{emb2}
Let $G$ be a transitive permutation group on
  $\Omega$. For
  $\omega\in\Omega$, assume that each simple direct factor of
$\E{G_\omega}/\Z{\E {G_\omega}}$ has multiplicity at most $4$, and that
$\E{G_\omega}$ has a unique component $Q$ such that  $Q/\Z {Q}$ has largest order among the
components of $\E H$. Suppose that $\N G Q =G_\omega$.  Then 
  the maximal size of a subset of  non-trivial pairwise coprime subdegrees of
  $G$ is at most $2$.
\end{proposition}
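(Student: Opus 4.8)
The plan is to mimic the proof of Proposition~\ref{emb1}, replacing the appeal to Lemma~\ref{lemmaemb1} by an appeal to Lemma~\ref{lemmaemb2}, and replacing the appeal to Lemma~\ref{id1} by an elementary observation about the coprimality of indices of subgroups of the single component $Q$.

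\begin{proof}
Fix $\omega\in\Omega$ and write $Q$ for the distinguished component of $\E{G_\omega}$ as in the hypothesis. We first show that $Q$ fixes a unique point of $\Omega$. Suppose $Q$ fixes an element $\omega'$ of $\Omega$ and let $g\in G$ with $\omega^g=\omega'$. Then $Q^{g^{-1}}\leq G_\omega$, and $Q^{g^{-1}}$ is a component of $G_\omega$ with $Q^{g^{-1}}/\Z{Q^{g^{-1}}}\cong Q/\Z Q$; in particular $Q^{g^{-1}}/\Z{Q^{g^{-1}}}$ again has largest order among the components of $\E{G_\omega}$, so by uniqueness the inclusion $Q^{g^{-1}}\hookrightarrow G_\omega$ is an injective homomorphism $Q\to H$ of the required form. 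Applying Lemma~\ref{lemmaemb2} with $H=G_\omega$, we get $Q^{g^{-1}}=Q$, whence $g\in\N G Q=G_\omega$ and $\omega'=\omega^g=\omega$. Thus $Q$ fixes only $\omega$.

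Now $Q$ is normal in $G_\omega$ (it is a component, hence subnormal, and it is normalised by $G_\omega=\N G Q$), and $Q$ fixes the unique point $\omega$. By Lemma~\ref{id2}, the maximal size of a subset of pairwise coprime non-trivial subdegrees of $G$ is at most $\mu(Q)$. It remains to prove $\mu(Q)\leq 2$. Since $Q$ is quasisimple we have $Q=[Q,Q]$ and $Q/\Z Q\cong T$ is a nonabelian simple group. If $\mu(Q)\geq 3$, then $Q$ has proper subgroups $A_1,A_2,A_3$ of pairwise coprime index. If $A_j\Z Q<Q$ for each $j$, then $A_1\Z Q/\Z Q$, $A_2\Z Q/\Z Q$, $A_3\Z Q/\Z Q$ are proper subgroups of the simple group $Q/\Z Q$ of pairwise coprime index, so $\mu(Q/\Z Q)\geq 3$; but by Lemma~\ref{id1} applied to the quasisimple group $Q/\Z Q$ (with $\ell=1$), or directly by \cite[Lemma~$5.2$]{DGPS}, we have $\mu(Q/\Z Q)\leq 2$, a contradiction. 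Hence $Q=A_{j_0}\Z Q$ for some $j_0$, and then $Q=[Q,Q]=[A_{j_0}\Z Q,A_{j_0}\Z Q]=[A_{j_0},A_{j_0}]\leq A_{j_0}$, contradicting that $A_{j_0}$ is proper. Therefore $\mu(Q)\leq 2$, and the proposition follows.
\end{proof}

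The main obstacle is the verification that the hypothesis of Lemma~\ref{lemmaemb2} is genuinely applicable to the conjugate $Q^{g^{-1}}$ sitting inside $G_\omega$: one needs that $Q^{g^{-1}}$ is a component of $G_\omega$ whose central quotient still has strictly largest order among all components of $\E{G_\omega}$, so that Lemma~\ref{lemmaemb2} forces $Q^{g^{-1}}=Q$ rather than merely $Q^{g^{-1}}$ equal to some component of the same isomorphism type. This is immediate here because conjugation in $G$ permutes the components of $G_\omega$ that lie in $\E{G_\omega}$ only if it normalises $\E{G_\omega}$; in fact the cleanest argument is that $Q^{g^{-1}}\le G_\omega$ gives an injective homomorphism $Q\to G_\omega$, to which Lemma~\ref{lemmaemb2} directly applies to conclude the image is $Q$. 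The rest is routine: normality of $Q$ in $G_\omega$, the ``unique fixed point'' reduction via Lemma~\ref{id2}, and the elementary bound $\mu(Q)\le 2$ lifted from the simple quotient.
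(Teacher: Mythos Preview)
Your proof is correct and follows essentially the same approach as the paper: show via Lemma~\ref{lemmaemb2} that $Q$ fixes only $\omega$, then apply Lemma~\ref{id2} with $N=Q$ and bound $\mu(Q)\le 2$. The paper simply cites Lemma~\ref{id1} (with $\ell=1$, $K=Q$) for the last step, whereas you reprove that special case inline; and your post-proof worry about $Q^{g^{-1}}$ being a component is unnecessary, since (as you eventually note) Lemma~\ref{lemmaemb2} only needs an injective homomorphism $Q\to G_\omega$, which conjugation by $g^{-1}$ provides directly.
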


\begin{proof}  
If $Q$ fixes the element $\omega'$ of $\Omega$, then there exists
$g\in G$ with $\omega'=\omega^g$ and $Q^{g^{-1}}\leq G_\omega$. From
Lemma~\ref{lemmaemb2}, we have $Q^{g^{-1}}=Q$  and so $g\in
\N G Q =G_\omega$. This yields $\omega'=\omega$ and $Q$ fixes a unique
point of $\Omega$. Now the proof follows from Lemmas~\ref{id1} and~\ref{id2}. 
\end{proof}

The following proposition is taken from~\cite[Theorem~$3.7$]{Wielandt}.
\begin{proposition} \label{sylow}  Let $G$ be a transitive permutation
  group on $\Omega$. For $\omega\in \Omega$, assume that $G_\omega$
  contains the normalizer  of a Sylow $p$-subgroup of $G$.  Then $p$
  divides the degree of every non-trivial suborbit of $G$. 
\end{proposition}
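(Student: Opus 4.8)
The plan is to reduce the statement to a transfer-theoretic fact about the point stabilizer $G_\omega$ and the orbits it induces on $\Omega$. Fix $\omega\in\Omega$ and let $P$ be a Sylow $p$-subgroup of $G$ with $\N G P\leq G_\omega$; in particular $P\leq G_\omega$ and $|G:G_\omega|$ is coprime to $p$ (since $\N G P$ already contains a full Sylow $p$-subgroup, and the index $|G:\N G P|$ counts Sylow $p$-subgroups, which is $\equiv 1\pmod p$; more simply, $|G:G_\omega|$ divides $|G:\N G P|$ up to the factor $|G_\omega:\N G P|$, but the clean statement is that $P$ is also a Sylow $p$-subgroup of $G_\omega$). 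So $P$ fixes $\omega$. The key observation is then: for any $\omega'\in\Omega$, the orbit $\Delta=\omega'^{G_\omega}$ has length $|G_\omega:G_{\omega,\omega'}|$, and I want to show $p\mid|\Delta|$ whenever $|\Delta|>1$. Equivalently, if $p\nmid|\Delta|$ then $|\Delta|=1$.

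The heart of the argument is a fixed-point count for the action of $P$ on $\Delta$. Since $P$ is a Sylow $p$-subgroup of $G_\omega$ and $p\nmid|\Delta|$, the number of $P$-fixed points on $\Delta$ is congruent to $|\Delta|\pmod p$, hence nonzero; pick $\omega'\in\Delta$ fixed by $P$ (replacing $\omega'$ within its $G_\omega$-orbit). Then $P\leq G_{\omega'}$, so $P$ is contained in a point stabilizer of $\omega'$; since all point stabilizers are conjugate and $P$ is Sylow in $G$, $P$ is in fact a Sylow $p$-subgroup of $G_{\omega'}$ as well. Now use the hypothesis in the strong form $\N G P\leq G_\omega$: by a Frattini-type / Sylow-transfer argument one shows $\omega'=\omega$. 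Concretely, write $\omega'=\omega^g$ for some $g\in G$ (by transitivity). Then $P$ and $P^{g^{-1}}$ are both Sylow $p$-subgroups of $G_\omega$ (the latter because $P\leq G_{\omega^g}$ gives $P^{g^{-1}}\leq G_\omega$), so there is $h\in G_\omega$ with $P^{g^{-1}h}=P$, i.e. $g^{-1}h\in\N G P\leq G_\omega$; hence $g\in G_\omega$ and $\omega'=\omega^g=\omega$. Therefore $\Delta=\{\omega\}$ has length $1$, a contradiction, so $p\mid|\Delta|$ for every non-trivial suborbit, as claimed.

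I expect the main obstacle to be stating the Sylow-counting step cleanly enough to be rigorous without over-explaining: one needs $P$ to be a full Sylow $p$-subgroup of $G_\omega$ (not merely some $p$-subgroup), which follows from $\N G P\leq G_\omega$ together with the fact that $|G:\N G P|\equiv 1\pmod p$ forces $p\nmid|G:G_\omega|$; and one needs the orbit-counting lemma ``a $p$-group acting on a set of size prime to $p$ has a fixed point'' applied to $P\curvearrowright\Delta$. Both are standard (this is essentially Wielandt's argument), so once the Sylow bookkeeping is pinned down the rest is the short conjugation argument above. Since the paper cites this as Theorem~3.7 of \cite{Wielandt}, I would in fact simply invoke that reference and include the above three-line conjugation sketch as the proof, rather than reproving the orbit-counting lemma from scratch.
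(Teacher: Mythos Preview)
Your argument is correct and is precisely Wielandt's proof: the paper itself does not give an independent argument but simply cites \cite[Theorem~$3.7$]{Wielandt}, and what you have written is the standard proof found there. One small clean-up: the reason $P$ is Sylow in $G_\omega$ is simply that $P\leq\N G P\leq G_\omega$ and $P$ is already Sylow in $G$; the detour through $|G:\N G P|\equiv 1\pmod p$ is unnecessary and, as you half-noticed, does not by itself give the conclusion.
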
 

\begin{proof}See~\cite[Theorem~$3.7$]{Wielandt}.
\end{proof}

\section{Auxiliary lemmas}\label{aux}

We say that a factorization $H=AB$ is
\emph{coprime} if $|H:A|$ is relatively prime to $|H:B|$ and both
$A,B$ are proper subgroups of $H$ (see~\cite[Section~$2$]{DGPS}). Also
$H=AB$ is \emph{maximal} if $A$ and $B$ are maximal subgroups of $H$.

\begin{lemma}\label{lucky}Let $H$ be a finite group, $r$ a prime, and $R$ a normal
  $r$-subgroup of $H$. Assume that $H/R=E_1\cdots E_\ell$ is a central
  product of $\ell$ quasisimple groups with $E_i/\Z{E_i}\cong
  E_j/\Z{E_j}$, for each $i,j\in \{1,\ldots,\ell\}$. Then $\mu(H)\leq
  3$. 

Assume further that $\mu(H)=3$ and let $A_1,A_2$ and $A_3$ be maximal
subgroups of $H$ 
  with $|H:A_i|$ relatively prime to $|H:A_j|$, for distinct $i$ and
  $j$ in $\{1,2,3\}$. Let $U$ be the normal subgroup of $H$ with
  $U/R=\Z{H/R}$. Then relabelling the set $\{A_1,A_2,A_3\}$ if
  necessary, $|H:A_3|$ is divisible by $r$, $U\leq A_1,A_2$ and
  $H/U=(A_1/U)(A_2/U)$ is a coprime maximal factorization of $H/U$.
\end{lemma}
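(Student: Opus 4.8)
The plan is to deduce everything from Lemma~\ref{id1} together with a careful analysis of how the three coprime-index subgroups $A_1,A_2,A_3$ interact with the normal $r$-subgroup $R$. First I would pass to the quotient $\overline{H}=H/R$, which is a central product of $\ell$ quasisimple groups all having isomorphic simple quotients, so Lemma~\ref{id1} gives $\mu(\overline H)\leq 2$. The key observation is that if $A_iR<H$ for at least three values of $i$, then the images $A_iR/R$ would be proper subgroups of $\overline H$ of pairwise coprime index (since $|\overline H:A_iR/R|=|H:A_iR|$ divides $|H:A_i|$), contradicting $\mu(\overline H)\leq 2$. Hence at most two of the $A_i$ satisfy $A_iR<H$; equivalently, at least one of them, say $A_3$ after relabelling, satisfies $A_3R=H$. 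Because $A_3$ is a proper subgroup, $A_3R=H$ with $R$ an $r$-group forces $r\mid |H:A_3|$ (indeed $|H:A_3|=|R:R\cap A_3|$ is a power of $r$ greater than $1$). This already shows $\mu(H)\leq 3$: if $\mu(H)\geq 3$, at least two of the $A_i$ have $A_iR<H$, and if all three did we get a contradiction, so exactly the situation just described occurs — but I should phrase the bound $\mu(H)\le 3$ cleanly by noting that a fourth coprime-index subgroup $A_4$ would again force $A_4R<H$ for at least three indices among $\{1,2,3,4\}$, the impossible case. So the first paragraph of the actual proof establishes $\mu(H)\le 3$ and, under $\mu(H)=3$, isolates $A_3$ with $r\mid |H:A_3|$ and $A_1R,A_2R$ proper in $H$.

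For the second part, with $\mu(H)=3$ and $A_3$ as above, I need to understand $A_1$ and $A_2$ modulo $U$, where $U/R=\Z{\overline H}$. Since $|H:A_1|$ is coprime to $|H:A_3|$ and the latter is divisible by $r$, we get $r\nmid|H:A_1|$, so $A_1$ contains a Sylow $r$-subgroup of $H$, and in particular $A_1\cap R\in\mathrm{Syl}_r(R)=\{R\}$, i.e.\ $R\leq A_1$; likewise $R\le A_2$. Thus $A_1/R$ and $A_2/R$ are proper subgroups of $\overline H$ with coprime indices in $\overline H$. Now I invoke the structure of $\overline H=E_1\cdots E_\ell$: by the argument in the proof of Lemma~\ref{id1}, since $\overline H=[\overline H,\overline H]$, a proper subgroup $B<\overline H$ cannot satisfy $B\,\Z{\overline H}=\overline H$ (else $\overline H=[\overline H,\overline H]=[B\Z{\overline H},B\Z{\overline H}]=[B,B]\le B$). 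Applying this to $B=A_1/R$ and $B=A_2/R$ shows $A_1/R$ and $A_2/R$ are both proper in $\overline H/\Z{\overline H}$ as well, i.e.\ $(A_1U)/U$ and $(A_2U)/U$ are proper subgroups of $H/U\cong\overline H/\Z{\overline H}=T_1\times\cdots\times T_\ell$. Their indices in $H/U$ divide $|H:A_1|,|H:A_2|$ respectively and so are coprime; hence $H/U=\big((A_1U)/U\big)\big((A_2U)/U\big)$ is a coprime factorization — here I use the standard fact that two subgroups of coprime index in a finite group generate (indeed, have product equal to) the whole group, which follows from counting: $|H/U|=|(A_1U)/U|\cdot|(A_2U)/U|/|(A_1U\cap A_2U)/U|$ combined with coprimality of the two indices forcing the product to be all of $H/U$.

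It remains to get $U\le A_1,A_2$ (not merely $A_iU/U$ proper) and to upgrade the factorization to a \emph{maximal} one. For $U\le A_1$: we have $R\le A_1$, so $U\le A_1$ iff $U/R\le A_1/R$, i.e.\ iff $\Z{\overline H}\le A_1/R$; but $\overline H=A_1R/R\cdot\text{(something)}$ — more directly, since $\overline H=[\overline H,\overline H]$ and $A_1/R$ has index in $\overline H$ coprime to $r\cdot\big(\text{primes dividing }|H:A_2|\big)$... actually the clean route is: $A_1/R$ contains a Sylow $p$-subgroup of $\overline H$ for every prime $p$ dividing $|\overline H:A_2R/R|$, and since $\Z{\overline H}$ is generated by its Sylow subgroups for such primes together with those dividing $|\overline H:A_1/R|$... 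I would instead argue that $A_1/R$ is normalized by $\Z{\overline H}$ trivially but rather show $\Z{\overline H}\le A_1/R$ because $\Z{\overline H}$ has order dividing both $|A_1/R|$ and $|A_2/R|$ in a way forced by coprimality of indices: since $|\overline H:A_1/R|$ is coprime to $|\Z{\overline H}|$? This is the point I expect to need the most care — one likely wants to note $\Z{\overline H}$ is a central, hence characteristic, subgroup and that a subgroup of coprime index to $|\Z{\overline H}|$ must contain it, so I must verify $\gcd(|H:A_1|,|\Z{\overline H}|)=1$, which holds because $|\Z{\overline H}|$ divides $|\overline H:A_2R/R|$ times... hmm, this needs the fact that $\Z{\overline H}\cap (A_2/R)$ has index dividing $|\overline H:A_2/R|$, coprime to $|\overline H:A_1/R|$, so writing $\Z{\overline H}=(\Z{\overline H}\cap A_1/R)(\Z{\overline H}\cap A_2/R)$ gives both intersections are all of $\Z{\overline H}$. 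That is the real content. Finally, for maximality: replace each $A_i$ by a maximal subgroup $M_i\ge A_i$; by the Remark following Definition~\ref{def}, $|H:M_i|$ are still pairwise coprime and distinct, so since $\mu(H)=3$ we have $\{M_1,M_2,M_3\}$ is again a maximal such family, and we may re-run the above with the $M_i$ in place of the $A_i$, concluding $H/U=(M_1/U)(M_2/U)$ is a coprime factorization; maximality of $M_1,M_2$ in $H$ together with $U\le M_1,M_2$ gives that $M_1/U,M_2/U$ are maximal in $H/U$, so this is a coprime \emph{maximal} factorization. The main obstacle, as flagged, is the bookkeeping that forces $U\le A_1$ and $U\le A_2$ — everything else is a routine chase through quotients using Lemma~\ref{id1} and the perfectness of $\overline H$.
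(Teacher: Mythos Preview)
Your overall strategy matches the paper's: reduce modulo $R$, invoke Lemma~\ref{id1} to get $\mu(H/R)\le 2$, and analyse how the $A_i$ sit relative to $R$. The bound $\mu(H)\le 3$ and the identification of $A_3$ with $r\mid|H:A_3|$ are fine.

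The gap is in the step $U\le A_1,A_2$, which you yourself flag as the ``main obstacle''. You have overlooked that the statement already \emph{assumes} $A_1,A_2,A_3$ are maximal in $H$. With that hypothesis the argument is immediate and is exactly what the paper does: since $R\le A_1$ and $A_1$ is maximal in $H$, the image $A_1/R$ is maximal in $\overline H$; your perfectness observation shows $(A_1/R)\,\Z{\overline H}\neq\overline H$, and maximality then forces $(A_1/R)\,\Z{\overline H}=A_1/R$, i.e.\ $\Z{\overline H}\le A_1/R$, i.e.\ $U\le A_1$. Similarly $U\le A_2$, and then $A_1/U,A_2/U$ are maximal in $H/U$, giving the coprime maximal factorization at once. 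Your final paragraph (``replace each $A_i$ by a maximal $M_i\ge A_i$'') is therefore redundant---$M_i=A_i$---and in any case you still assert ``$U\le M_1,M_2$'' there without proof.

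Your alternative attempts to prove $U\le A_1$ without using maximality cannot succeed. For instance, the claim that $\Z{\overline H}=(\Z{\overline H}\cap A_1/R)(\Z{\overline H}\cap A_2/R)$ forces each intersection to equal $\Z{\overline H}$ is false: the product being everything does not make each factor everything. A concrete obstruction is $\overline H=\SL_2(5)$ with $A_1/R$ a Sylow $5$-subgroup (index $24$) and $A_2/R\cong\SL_2(3)$ (index $5$); the indices are coprime, yet $A_1/R$ does not contain the centre $\{\pm I\}$. So maximality is genuinely needed here, and once you use it the proof collapses to the paper's short argument.
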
 

\begin{proof}
Suppose that $\mu(H)\geq 3$ and let $A_1,A_2$ and $A_3$ be maximal
subgroups of $H$  with $|H:A_i|$ relatively prime to $|H:A_j|$, for
distinct $i$ and 
  $j$ in $\{1,2,3\}$.
Since $A_i$ is maximal, we obtain that 
either $H = A_i R$ or $R \leq A_i$. In the former case, the index $|H
: A_i | = |R : A_i \cap R|$ is 
divisible by $r$. In the latter case, $A_i /R$ is a maximal subgroup
of $H/R$. From Lemma~\ref{id1}, we have $\mu(H/R) \leq 2$ and so, in
particular, $\mu(H)=3$ (since we are assuming $\mu(H)\geq 3$). We note
that this proves the first assertion of the lemma. Since $\mu(H/R)\leq
2$, there exists exactly one 
element $A_i$ in $\{A_1 , A_2 , A_3 \}$ with $R\nsubseteq A_i$ , and
there are exactly two elements $A_j$ and $A_k$ 
in $\{A_1 , A_2 , A_3 \}$ with $R \leq A_j , A_k$. Thus, replacing
$A_i$ by $A_3$ if necessary, we may 
assume that $R \leq A_1 , A_2$ and that $|H : A_3 |$ is divisible by
$r$. Since $A_1 /R$ and $A_2 /R$ 
are maximal subgroups of $H/R$ and as $H/R$ is a central product of
quasisimple groups, we have that $U \leq A_1 , A_2$.
Hence $H/U = (A_1 /U )(A_2 /U )$ is a maximal factorization of
the characteristically
simple group $H/U$ with $\gcd(|H : A_1 |, |H : A_2 |) = 1$.
\end{proof}


%

Given a finite group $G$, we say that the normal subgroup $N$ of $G$
is the {\em last term of the derived series} of $G$ if $G/N$ is
soluble and $N=[N,N]$.

\begin{lemma}\label{derivedSeries}
Let $T$ be a transitive permutation group on $\Omega$, let $\omega$
be in $\Omega$ and let $N$ be the last term
of the derived series of $T_\omega$. If $T_\omega=\N T N$, then $N$ fixes only the point $\omega$. 
\end{lemma}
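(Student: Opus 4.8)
The plan is to mimic the argument used in Proposition~\ref{emb1}, replacing the subgroup $\E{G_\omega}$ there by the last term $N$ of the derived series of $T_\omega$. Fix $\omega\in\Omega$ and suppose $N$ fixes some $\omega'\in\Omega$. Since $T$ is transitive, choose $g\in T$ with $\omega^g=\omega'$; then $N=N_{T_{\omega'}}$ (the last term of the derived series of the point stabiliser $T_{\omega'}$) because $N\le T_{\omega'}$ is normal and, being perfect with soluble quotient in $T_\omega$, one checks it is the last term of the derived series of $T_{\omega'}$ as well. Conjugating by $g^{-1}$, the subgroup $N^{g^{-1}}$ is the last term of the derived series of $T_\omega=(T_{\omega'})^{g^{-1}}$, so $N^{g^{-1}}=N$ by uniqueness of the last term of the derived series. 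Hence $g\in\N T N=T_\omega$, which forces $\omega'=\omega^g=\omega$. Therefore $N$ fixes only $\omega$.

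The one subtlety to be careful about is the claim that $N$ is the last term of the derived series of $T_{\omega'}$ and not merely a perfect normal subgroup of it with soluble quotient. Since $\omega'=\omega^g$, we have $T_{\omega'}=(T_\omega)^g$, so the last term of the derived series of $T_{\omega'}$ is $N'{}^{g}$ where $N'$ denotes the last term of the derived series of $T_\omega$; but $N'=N$ by hypothesis, so the last term of the derived series of $T_{\omega'}$ is $N^g$. On the other hand, $N\le T_{\omega'}$, $N=[N,N]$, and $T_{\omega'}/N$ need not a priori be soluble just from $N\unlhd T_{\omega'}$. Instead I would argue directly: $N^g$ is the last term of the derived series of $T_{\omega'}$, and since $N$ fixes $\omega'$ we have $N\le T_{\omega'}$; applying $\N T N=T_\omega$ I want $g\in T_\omega$, so it suffices to show $N^g=N$. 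Here I invoke that $N$ itself is a perfect subnormal-ish piece: actually the clean route is to observe $N\le T_{\omega'}$ and $N=[N,N]$, so $N$ lies in the last term $D$ of the derived series of $T_{\omega'}$, i.e. $N\le N^g$; comparing orders $|N|=|N^g|$ gives $N=N^g$, hence $g\in\N T N=T_\omega$ and $\omega'=\omega$.

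The main obstacle is precisely pinning down this order/containment comparison cleanly: one must verify that the last term of the derived series of $T_{\omega'}$ equals $N^g$ (immediate from $T_{\omega'}=(T_\omega)^g$ and the hypothesis on $T_\omega$), and that $N\le T_{\omega'}$ implies $N$ is contained in that last term (because $N$ is perfect, it is contained in the last term of the derived series of any group containing it). With both facts in hand the proof is a two-line order comparison. No heavy machinery is needed — this is a short lemma whose role is to feed into applications of Lemmas~\ref{id1} and~\ref{id2} exactly as Proposition~\ref{emb1} does, with $\mu(N)$ playing the role of the bound.
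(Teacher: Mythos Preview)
Your final argument is correct and essentially identical to the paper's proof. The paper conjugates in the opposite direction---setting $K=N^{g^{-1}}\le T_\omega$, observing that $K/(K\cap N)$ is soluble (as a subgroup of $T_\omega/N$) and perfect, hence $K\le N$, then comparing orders---while you work inside $T_{\omega'}$ and use $N\le N^g$; but the core step (a perfect subgroup lies in the last term of the derived series, then compare orders) is the same. Your first paragraph's attempt to claim directly that $N$ is the last term of the derived series of $T_{\omega'}$ was indeed premature, and you were right to abandon it for the containment-plus-order argument in your second paragraph.
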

\begin{proof}
Suppose that $N$ fixes $\omega'$ and write
$\omega'=\omega^g$, for some $g\in T$.  Set
$K=N^{g^{-1}}$. Since $K\leq T_\omega$, $T_\omega/N$ is soluble and
$NK/N$ is isomorphic 
to $K/(K\cap N)$, we see that $K/(K\cap N)$ is soluble. Since
$K=[K,K]$, we obtain that $K=N\cap K$ and so $N=K=N^{g^{-1}}$. This shows
that $g\in \N T N=T_\omega$. So $\omega'=\omega$ and  $N$ fixes only
the point $\omega$ of $\Omega$.
\end{proof}

\begin{remark}
\label{ps-max}
{\rm Let $T$ be a nonabelian simple permutation group on a set $\Omega$ and 
let $T_\omega$  be  pseudo-maximal in $T$, with $\omega \in \Omega$.
So, there exists an almost simple group $A$ with socle $T$ and a maximal subgroup $M$ of $A$ such that
$T\nsubseteq M$ and $T_\omega=T\cap M$. 
Let  $N$ be  a characteristic subgroup of $T_\omega$.  Then $M=\N A N$, because $M$ is maximal in $A$
and $T_\omega\unlhd M$. 
Hence, $T_\omega=T\cap \N AN=\N T N$. Furthermore, as $T_\omega=\N T {T_\omega}$, we obtain that
$\omega$ is the only fixed point of $T_\omega$ in $\Omega$. We will use these two facts
repeatedly in the following. }
\end{remark}





\section{Alternating groups}\label{alt}

\begin{proof}[Proof of Theorem~A for the alternating groups]
A subgroup $H$ of $ \Sym(n)$ is either intransitive, imprimitive or primitive in its action
on \{1,\ldots,n\}. In the proof of this theorem we consider these three
cases separately.

Let $T=\Alt(n)$,  for some $n\geq 5$.
Fix $\omega\in\Omega$ and write $H=T_\omega$. Assume that  $H$ is 
intransitive in the natural action of $T$ of degree $n$. Then $H\cong
(\Sym(k)\times \Sym(n-k))\cap T$, for some $k$ with $1\leq k<n/2$. (Note that for $n$ even, $(\Sym(n/2)\times \Sym(n/2))\cap T$ is not pseudo-maximal in $T$.) In
particular, the 
action  of  $T$ on $\Omega$ is permutation equivalent to the action of 
$\Alt(n)$ 
on the $k$-subsets of $\{1,\ldots,n\}$. Suppose that
$n-k\geq 5$. Let $N$ be the minimal normal
subgroup of $H$ isomorphic to $\Alt(n-k)$. Clearly, $N$ is simple and
fixes a unique 
$k$-subset of $\{1,\ldots,n\}$. So, by Lemmas~\ref{id1} and~\ref{id2},
the group $T$ has at 
most $2$ non-trivial coprime subdegrees on $\Omega$. Now, suppose that
$n-k\leq 4$. If $k\leq 2$, then the rank of $T$ is at most $3$ and the
assertion follows immediately. If $k\geq 3$, then $(n,k)=(7,3)$ and by direct
inspection we see that $T$ has no pair of non-trivial coprime subdegrees.

Assume next that $H$ is imprimitive in the natural action of $T$ of
degree $n$. Then 
$H\cong (\Sym(k)\wr \Sym(n/k))\cap T$, for some divisor $k$ of $n$ with
$1<k<n$. In particular, the action of $T$ on $\Omega$ is permutation equivalent
to the action of $\Alt(n)$ on the set $\mathcal{P}$ of partitions of
$\{1,\ldots,n\}$ into $n/k$ parts all of size $k$. Suppose that $k\geq
5$. Let $N$ be the socle of $H$. Clearly, $N\cong \Alt(k)^{n/k}$ and
$N$ fixes a unique element of $\mathcal{P}$. So,
Lemmas~\ref{id1} and~\ref{id2} yield that $T$ has at most $2$
non-trivial pairwise coprime subdegrees on $\Omega$. It remains to consider the
case that $k\in \{2,3,4\}$. Let $N$ be the normal subgroup of $H$ isomorphic
to $\Sym(k)^{n/k}\cap T$. Clearly, $N$ fixes a unique element of $\mathcal{P}$. Furthermore, since
$N$ is a $\{2,3\}$-group, we have $\mu(N)\leq 2$. Therefore
Lemma~\ref{id2} yields that $T$ has at most $2$ non-trivial coprime subdegrees.

It remains to consider the case that $H$ is a primitive subgroup of
$T$ in the natural action of degree $n$. Let $N$ be the socle of
$H$. Suppose that $N\cong S^\ell$ for 
some nonabelian simple group $S$ and $\ell\geq 1$. Clearly, $N=\E H$
and $\cent H N=1$, and $\N T N=H$ because $H$ is pseudo-maximal in
$T$. In particular, from Proposition~\ref{emb1} we see that $T$ has at
most $2$ non-trivial pairwise coprime subdegrees. Finally assume that $N$ is an
elementary abelian $p$-group. In the rest of the proof, we identify
$\mathrm{AGL}(d,p)$ with its image under the natural  affine
permutation representation. So $H=\textrm{AGL}(d,p)\cap T$ and hence $H$ is isomorphic to a subgroup of
index $2$ in $\mathrm{AGL}(d,p)$ if $p$ is odd, and $H=\mathrm{AGL}(d,p)$ if $p=2$. (Note that the affine general linear group $\mathrm{AGL}(d,p)$ is a subgroup of $\Alt(p^d)$ only for $p=2$.)  If $d=1$, then by Proposition~\ref{sylow}
every subdegree of $T$ is divisible by $p$. Assume now that $d>1$. Suppose
that $T$ has two coprime subdegrees $n_1=|\omega_1^{H}|$ and
$n_2=|\omega_2^{H}|$. We show that either $n_1$ or $n_2$ is divisible by
$p$, from which it follows that $T$ has at most $2$ non-trivial
coprime subdegrees. We argue by contradiction and we assume that $n_1$
and $n_2$ are not divisible by $p$. In particular, each of $H_{\omega_1}$ and
$H_{\omega_2}$ contains a Sylow $p$-subgroup of $H$. Therefore,
from~\cite[Theorem~$1$]{Seitz} we have that  $H_{\omega_1}=
(N\rtimes P_1)\cap T$ and $H_{\omega_2}=(N\rtimes P_2)\cap T$ with
$P_1$ and $P_2$  maximal parabolic subgroups of $\mathrm{GL}(d,p)$, that is, 
\[
P_i\cong\left\{
\left(
\begin{array}{cc}
A&B\\
0&C\\
\end{array}
\right)\mid A\in \mathrm{GL}(d_i,p),\mathrm{GL}(d-d_i,p), B\in
\mathrm{Mat}(d_i\times (d-d_i),p) 
\right\}
\]
where $1\leq d_i\leq d-1$, for $i=1,2$. For each $i\in\{1,2\}$, we
have $N\leq H_{\omega_i}$, and so, from the modular law, we obtain
$H_{\omega_i}=N\rtimes (P_i\cap T)$. Therefore

$$n_i=|H:H_{\omega_i}|=|\GL(d,p)\cap T: P_i\cap T|=|\GL(d,p):P_i|,$$
for $i=1,2$. Since $n_1$ and $n_2$ are coprime, $\GL(d,p)=P_1P_2$,
leading to a factorization 
of $\mathrm{PGL}(d,p)$ by two maximal parabolics. No such
factorization exists, see for example~\cite[Table~$1$]{LPS}.
\end{proof}

\begin{table}[!h]
\begin{tabular}{|l|l|l|}\hline
Group& Soluble case&Isomorphisms\\\hline
$\PSL_n(q)$&$n=1$ or $(n,q)=(2,2),(2,3)$&\\
$\PSp_n(q)$&$(n,q)=(2,2),(2,3)$&$\PSp_2(q)=\PSL_2(q)$\\
$\PSU_n(q)$&$n=1$ or $(n,q)=(2,2),(2,3),(3,2)$&$\PSU_2(q)=\PSL_2(q)$\\
$\POmega_n(q)$&$n=1$ or $(n,q)=(3,3)$&$\POmega_3(q)=\PSL_2(q)$,
$\POmega_5(q)=\PSp_4(q)$\\
$\POmega_n^+(q)$&$n=2$ or
$(n,q)=(4,2),(4,3)$&$\POmega_4^+(q)=\PSL_2(q)\times \PSL_2(q)$\\
&&$\POmega_6^+(q)=\PSL_4(q)$\\
$\POmega_n^-(q)$&$n=2$&$\POmega_4^-(q)=\PSL_2(q^2)$\\
&&$\POmega_6^-(q)=\PSU_4(q)$\\\hline
\end{tabular}
\caption{Some information on simple classical groups}
\label{table2}
\end{table}

\section{Classical groups}\label{classical}
In this section we prove Theorem~\ref{theorem} when the simple group
$T$ is a classical group. 
We use Aschbacher's theorem,  which subdivides  the maximal subgroups
of the almost simple groups 
with socle $T$ in nine classes $\mathcal{C}_1,\ldots,\mathcal{C}_8$ and
$\mathcal{S}$. In particular, in what follows we use the notation, the
treatment and
the terminology in~\cite[Chapter~$3$ and~$4$]{KL}.

We start with a preliminary proposition which will prove to be helful
in the proof of the main result of this section. First we set some
notation and some terminology. 

\begin{notation}\label{nota}{\rm 
Let $V$ be an $n$-dimensional vector
space over a field $\mathbb{F}_q$ of size $q$ and let $V_1\oplus \cdots \oplus V_{t}$ be a direct sum
decomposition of $V$ into $t$ subspaces. Let
$H$ be a subgroup $\GL(V)$ leaving invariant each summand of this
decomposition, that is, $V_i^h=V_i$ for all $h\in H$ and for all $i\in
\{1,\ldots,t\}$. Let
$H_i$ be the linear group induced by $H$ in its action on $V_i$. 
Note that $H_i$ centralizes $V_j$ (that is, $H_i$ acts trivially on $V_j$), for each $j\neq i$. We
assume that, for each $i\in \{1,\ldots,t\}$, the subspace $V_i$ is
an irreducible $H_i$-module. Fix $i$ and $j$ two distinct elements of
$\{1,\ldots,t\}$. We suppose that for each $a_i\in H_i$, there
exists $a_j\in H_j$ with $a_ia_j\in H$. (In particular, the element $a_ia_j$ of $H$ acts trivially on
$V_k$, for each $k\neq i,j$.)
Finally, we assume that for
each $i$, the group $H_i$ contains an element fixing no non-zero
vector of $V_i$.} 
\end{notation}

\begin{proposition}\label{claaa}Let $V$ and $V_1,\ldots,V_{t}$
  be as in Notation~\ref{nota}. If $t\geq 3$, then
  $V_1\oplus\cdots \oplus V_{t}$ is the unique decomposition of $V$ as a  direct sum
  of irreducible $H$-submodules of $V$. 
\end{proposition}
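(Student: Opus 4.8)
The plan is to prove the stronger statement that $\{V_1,\dots,V_t\}$ is \emph{exactly} the set of irreducible $H$-submodules of $V$; uniqueness of the decomposition is then immediate. Note first that each $V_i$ is indeed an irreducible $H$-module: $H$ stabilises $V_i$ and acts on it through $H_i$, so its $H$-submodules coincide with its $H_i$-submodules, which are $0$ and $V_i$ by Notation~\ref{nota}. Granting that every irreducible $H$-submodule $W$ equals some $V_i$, any decomposition $V=W_1\oplus\cdots\oplus W_s$ into irreducible $H$-submodules has each $W_r\in\{V_1,\dots,V_t\}$; and if some $V_i$ failed to occur among the $W_r$, we would get $V=\sum_r W_r\le\bigoplus_{j\ne i}V_j\subsetneq V$, a contradiction, so $\{W_1,\dots,W_s\}=\{V_1,\dots,V_t\}$.

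So fix an irreducible $H$-submodule $W\le V$ and let $p_i\colon V\to V_i$ be the projections belonging to $V=\bigoplus_j V_j$; since $H$ preserves every summand, each $p_i$ is $H$-equivariant. Put $S=\{\,i : p_i(W)\ne 0\,\}$. For $i\in S$ the map $p_i|_W\colon W\to V_i$ is a nonzero $H$-homomorphism, hence injective ($W$ irreducible) with image an $H$-submodule of $V_i$, hence all of $V_i$ ($V_i$ irreducible); thus $p_i|_W$ is an $H$-isomorphism. Also $S\ne\emptyset$ as $W\ne 0$, and if $|S|=1$, say $S=\{i\}$, then every $w\in W$ equals $p_i(w)$, so $W\subseteq V_i$ and $W=V_i$; we are done in that case. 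It therefore remains to rule out $|S|\ge 2$, and this is where $t\ge 3$ is used and where I expect the one genuine point of the argument to lie.

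Suppose $|S|\ge 2$; pick distinct $i,k\in S$, and --- crucially using $t\ge 3$ --- a third index $j\notin\{i,k\}$. By Notation~\ref{nota} there is $c_i\in H_i$ fixing no nonzero vector of $V_i$, and applying the ``partner'' hypothesis to the pair $\{i,j\}$ and the element $c_i$ produces $a_j\in H_j$ with $g:=c_ia_j\in H$ (the element of $\GL(V)$ equal to $c_i$ on $V_i$, to $a_j$ on $V_j$, and to the identity on every other summand); in particular $g$ acts on $V_i$ as $c_i$ and trivially on $V_k$, since $k\notin\{i,j\}$. Composing the two isomorphisms found above gives a nonzero, hence injective, $H$-homomorphism $\varphi:=(p_k|_W)\circ(p_i|_W)^{-1}\colon V_i\to V_k$. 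Equivariance under $g$ yields, for all $v\in V_i$,
\[
\varphi(c_i v)=\varphi(g\cdot v)=g\cdot\varphi(v)=\varphi(v),
\]
so $c_i v=v$ by injectivity of $\varphi$; hence $c_i$ fixes every vector of $V_i$, contrary to the choice of $c_i$ (and to $V_i\ne 0$). Thus $|S|=1$ and $W=V_i$, as required. The only real obstacle is the observation that one must reserve a third summand $V_j$ as an auxiliary: the partner hypothesis only supplies elements of $H$ supported on two summands at a time, so to build an element of $H$ that is simultaneously fixed-point-free on $V_i$ and trivial on a \emph{second} summand $V_k$ appearing in $S$, one needs a further index $j$ to absorb the unavoidable second support --- precisely what $t\ge 3$ guarantees.
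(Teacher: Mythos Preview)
Your proof is correct. Both you and the paper reduce the proposition to the claim that every irreducible $H$-submodule $W$ of $V$ equals some $V_i$, and both exploit $t\ge 3$ in the same way: to find a third index $j$ so that the partnered element $g=c_ia_j\in H$ acts nontrivially on $V_i$ yet trivially on a second chosen summand $V_k$. Where you diverge is in how you finish. The paper argues by direct vector manipulation: starting from a nonzero $u\in W$ with components $u_1+\cdots+u_t$, it uses $u-u^a$ with $a=a_1a_2$ to produce an element of $W\cap(V_1\oplus V_2)$, and then $u-u^b$ with $b=a_1a_3$ to land in $W\cap V_1$, forcing $W=V_1$. You instead use Schur's-lemma reasoning on the $H$-equivariant projections $p_i|_W$: for $i\in S$ these are isomorphisms, and if $|S|\ge 2$ the intertwiner $\varphi=(p_k|_W)\circ(p_i|_W)^{-1}$ transports the $g$-action to show $c_i$ fixes $V_i$ pointwise, a contradiction. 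Your route is a bit more conceptual and avoids the two-step reduction; the paper's route is more elementary and self-contained, needing no module-homomorphism language. Either way the crux---reserving a third summand to absorb the unavoidable second support of the partnered element---is identical, and you identified it cleanly.
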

\begin{proof}
Assume that $t\geq 3$. Let $U$ be an irreducible $H$-submodule of $V$. We show that
$U=V_i$, for some $i\in \{1,\ldots,t\}$, from which the proof
follows. Let $u$ be a non-zero element of $U$ and write $u=u_1+\cdots+u_{t}$
with $u_i\in V_i$. Since $u\neq 0$, relabelling the direct summands
$\{V_i\}_i$ if necessary, we may assume that $u_1\neq 0$. Using
Notation~\ref{nota}, choose $a_1\in H_1$ fixing no non-zero element of
$V_1$. From
Notation~\ref{nota}, we see that there exists $a_2\in H_2$ with
$a=a_1a_2\in H$. Now, as $a$ centralizes $u_3,\ldots,u_t$, we obtain  $u-u^a=(u_1-u_1^{a_1})+(u_2-u_2^{a_2})\in U\cap
(V_1\oplus V_2)$. So, replacing $u$ by $u-u^a$ if necessary, we may
assume that $u=u_1+u_2\in V_1\oplus V_2$ and that $u_1\neq 0$. 

Since $t\geq 3$, from Notation~\ref{nota} we see that there exists
$a_3\in H_3$ with 
$b=a_1a_3\in H$. Then $u-u^b=u_1-u_1^{a_1}\in U\cap V_1$. So,
replacing $u$ by $u-u^b$ if necessary, we may 
assume that $u\in V_1$. Since $H$ acts irreducibly on $U$, we obtain 
$U=\langle u^h\mid h\in H\rangle\leq V_1$. As $V_1$ is an irreducible
$H$-module, we have $U=V_1$.    
\end{proof}

We observe that Proposition~\ref{claaa} does not hold for $t = 2$. Consider,
 for instance, the group \[
H=\left\{\left(
\begin{array}{cc}
a&0\\
0&a
\end{array}\right)\left|\right. a\in \mathbb{F}_q\setminus\{0\}
\right\}
\]
 of scalar matrices acting 
on the $2$-dimensional vector space $\mathbb{F}_q^2$. If $V_1=(1,0)\mathbb{F}_q$
 and $V_2=(0,1)\mathbb{F}_q$, then $V=V_1\oplus V_2$ is a direct decomposition that satisfies Notation~\ref{nota} with $t=2$ (here the group induced by $H$ on $V_i$ is the multiplicative group of the field $\mathbb{F}_q$ acting by multiplication). Clearly, every pair of $1$-dimensional subspaces of $V$ forms an $H$-invariant decomposition, and hence Proposition~\ref{claaa} does not hold for $t=2$.

\begin{notation}\label{nota1}{\rm 
Let $V$ be an $n$-dimensional vector space over a field $\mathbb{F}_q$
of size $q$. We let  $G$ be a subgroup of $\GL(V)$ and we suppose that
$G=\SL(V)$, or that $V$ is endowed with a  non-degenerate Hermitian,
symplectic or quadratic form and $G=\SU(V)$, $\Sp(V)$ or
$\Omega^\varepsilon(V)$ (with $\varepsilon\in \{\circ,+,-\}$)
respectively. Write $T=G/\Z{G}$ and assume that $T$ is a nonabelian
simple group. Assume that $T$ is a transitive permutation group on
$\Omega$ with pseudo-maximal point stabilizer $T_\omega$. 

Let $A$
be an almost simple group with socle $T$ and $M$ be a maximal subgroup of
$A$ with $T\nsubseteq M$ and with $T_\omega=M\cap T$. Suppose that $M$
lies in the Aschbacher class $\mathcal{C}_2$, that is,
$M$ is the stabilizer  in $A$ of a
direct sum decomposition $V_1\oplus\cdots\oplus V_{n/m}$ of $V$.
So, $M$ is of type
$\GL_m(q)\wr\Sym(n/m)$ if $T=\PSL_n(q)$, of type $\GU_m(q)\wr
\Sym(n/m)$ if $T=\PSU_n(q)$, of type $\Sp_m(q)\wr\Sym(n/m)$ if
$T=\PSp_n(q)$, and of type $\OO_m^\xi(q)\wr\Sym(n/m)$ if
$T=\POmega_n^{\varepsilon}(q)$  (see~\cite[Chapter~$3$ and~$4.2$]{KL}
for details and terminology). 
} 
\end{notation}

\begin{proposition}\label{claaa1}Let $T,\Omega$ and $M$ be as in
  Notation~\ref{nota1}. If
  $n/m\geq 3$ and if $(T,M)\neq (\PSL_n(2),\GL_1(2)\wr \Sym(n))$,
  $(\POmega_{n}^\varepsilon,\OO_2^+(2)\wr \Sym(n/2))$ or \\
  $(\POmega_n^\varepsilon(3), \OO_2^+(3)\wr \Sym(n/2))$, then the
  kernel of the $T_\omega$-action on the $V_i$ fixes a unique point of
  $\Omega$.  
\end{proposition}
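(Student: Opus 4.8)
The plan is to apply the embedding machinery of Section~\ref{embedding results}, in the form of Lemma~\ref{derivedSeries} together with Remark~\ref{ps-max}, to a suitable normal subgroup $N$ of $T_\omega$ built from the decomposition $V_1\oplus\cdots\oplus V_{n/m}$. Write $t=n/m$. The natural candidate for $N$ is the last term of the derived series of the pointwise-stabilizer-of-the-decomposition, i.e.\ the subgroup of $T_\omega$ that stabilizes each $V_i$ and acts on it as (the relevant classical group mod scalars); concretely $N$ is essentially $(\SL_m(q)\times\cdots)$, $(\SU_m(q)\times\cdots)$, $(\Sp_m(q)\times\cdots)$ or $(\Omega_m^\xi(q)\times\cdots)$ intersected with $G$ and projected to $T$, a central product of $t$ quasisimple groups of the same type, provided $m$ and $q$ are large enough that these really are quasisimple (which is exactly why the three exceptional pairs, where the induced group on a summand is soluble, are excluded — $\GL_1(2)$ is trivial and $\OO_2^+(2)$, $\OO_2^+(3)$ give soluble linear groups). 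First I would verify that $N$ is characteristic in $T_\omega$: this follows because $N$ is the last term of the derived series of the $\mathcal{C}_2$-kernel, and that kernel is characteristic (it is the kernel of the action of $T_\omega$ on the set of minimal imprimitivity data). By Remark~\ref{ps-max}, since $N$ is characteristic in the pseudo-maximal $T_\omega$, we get $T_\omega=\N T N$, and then Lemma~\ref{derivedSeries} gives immediately that $N$ fixes only $\omega$.

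There is one gap in the previous paragraph: Lemma~\ref{derivedSeries} needs $N$ to be the last term of the derived series of $T_\omega$ \emph{itself}, not of a subgroup. So instead I would argue directly, mimicking the proof of Lemma~\ref{derivedSeries}: suppose $N$ fixes $\omega'=\omega^g$ for some $g\in T$, and set $K=N^{g^{-1}}\le T_\omega$. The point is to show $K=N$, whence $g\in\N T N=T_\omega$ and $\omega'=\omega$. Now $K$ is a central product of $t$ quasisimple groups, each isomorphic mod centre to a fixed classical simple group $S$ of order comparable to the order of the factors of $N$. The heart of the matter is therefore an embedding statement: a central product of $t$ copies of (the quasisimple cover of) $S$, sitting inside $T_\omega$, must coincide with $N$. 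This is where I would invoke Lemma~\ref{lemmaemb0} (or the more flexible Lemma~\ref{lemmaemb2}): $\E{T_\omega}$ contains $N$ with $\E{T_\omega}/\Z{\cdot}\cong S^t$ (the $\mathcal{C}_2$-structure forces no larger components — components of $T_\omega$ beyond those coming from the $V_i$ would have to be permuted among themselves or centralize the decomposition, and the wreath-product shape rules this out), so the hypotheses of Lemma~\ref{lemmaemb0} hold with $H=T_\omega$, $E=\E{T_\omega}$, $\ell=t$, $\ell'=t$; the lemma then yields $K=\E{T_\omega}=N$.

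The main obstacle I expect is bookkeeping around the centre and the intersection with $G$ rather than anything deep: one must be careful that the groups induced on the $V_i$ — namely $\GL_m(q)$, $\GU_m(q)$, $\Sp_m(q)$, $\OO_m^\xi(q)$ — have quasisimple derived subgroups exactly outside a short list of small $(m,q)$, and that after intersecting with $\SL(V)$/$\SU(V)$/$\Sp(V)$/$\Omega^\varepsilon(V)$ and factoring out scalars one still gets a central product of $t$ quasisimple groups all isomorphic mod centre (the scalars linking the different $V_i$ in the $\mathcal{C}_2$-construction must be tracked, which is precisely the content of the last few sentences of Notation~\ref{nota}). The small cases of $(m,q)$ where $\Omega_m^\xi(q)$ or $\SL_m(q)$ fails to be quasisimple — $\POmega_3\cong\PSL_2$, $\POmega_4^+\cong\PSL_2\times\PSL_2$, $\Omega_2^\pm$, $\SL_1$, $\SL_2(2)$, $\SL_2(3)$, $\SU_3(2)$ etc.\ — will either be absorbed because the resulting $T$ is not simple (Table~\ref{table2}) or because $M$ is not actually maximal/pseudo-maximal, except for the genuinely exceptional triples $(\PSL_n(2),\GL_1(2)\wr\Sym(n))$ and $(\POmega_n^\varepsilon(q),\OO_2^+(q)\wr\Sym(n/2))$ with $q\in\{2,3\}$, which is exactly why they are excluded from the statement. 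Finally, the hypothesis $t\ge3$ is needed not here but to feed into the subsequent application of Proposition~\ref{claaa} (uniqueness of the decomposition) elsewhere; for the present statement $t\ge3$ is harmless and in fact $t\ge2$ would suffice for the embedding argument, but I would keep the hypothesis as stated.
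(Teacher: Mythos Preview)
Your approach has a genuine gap. You propose to take $N$ to be the last term of the derived series of the kernel $H$, and to argue via the embedding lemmas that $N$ is a central product of $t$ quasisimple groups forced to coincide with $\E{T_\omega}$. But this only makes sense when the groups induced on the $V_i$ have quasisimple derived subgroups, and you claim that the remaining ``small'' cases are either non-simple $T$, non-maximal $M$, or one of the three excluded pairs. That is simply false. For instance, take $T=\PSL_n(q)$ with $m=1$ and $q\ge 3$: here $\overline{H}_i\cong\GL_1(q)$ is cyclic, $H$ is abelian, and your $N$ is trivial, so the argument gives nothing. Likewise $\GL_2(2)$, $\GL_2(3)$, $\GU_1(q)$, $\GU_2(2)$, $\GU_2(3)$, $\GU_3(2)$, $\Sp_2(2)$, $\Sp_2(3)$, $\OO_1(q)$, $\OO_2^-(q)$, $\OO_3(3)$ are all soluble yet are \emph{not} excluded by the hypotheses of the proposition. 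In fact these soluble-$H$ cases are precisely where Proposition~\ref{claaa1} is actually invoked later in Section~\ref{classical}; when $\E{T_\omega}/\Z{\E{T_\omega}}$ is a genuine product of isomorphic nonabelian simple groups the paper bypasses Proposition~\ref{claaa1} entirely and uses Proposition~\ref{emb1} instead.

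You have also misread the role of the hypothesis $t\ge 3$ and the reason for the three exclusions. The exclusions are not about solubility: they are the cases where $\overline{H}_i$ fails to act irreducibly on $V_i$ (or is trivial), which are exactly the hypotheses needed for Proposition~\ref{claaa}. The paper's proof is geometric, not algebraic: one checks that the conditions of Notation~\ref{nota} hold, so by Proposition~\ref{claaa} (this is where $t\ge 3$ is essential) the group $\overline{H}$ preserves a \emph{unique} decomposition $V_1\oplus\cdots\oplus V_t$ of $V$. If $H$ fixed a second point $\omega'=\omega^g$, lifting $g$ to $\overline{g}\in G$ gives a second $\overline{H}$-invariant decomposition $V_1^{\overline g}\oplus\cdots\oplus V_t^{\overline g}$, forcing $\overline{g}$ to permute the $V_i$ and hence $g\in M\cap T=T_\omega$. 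This argument works uniformly, including when $H$ is soluble, which yours does not.
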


\begin{proof}
Let $\overline{H}$ be the subgroup of $G$ leaving invariant each direct
summand $V_i$ of $V$, let $H$ be the projection of $\overline{H}$
in $T_\omega$ and, for each $i\in \{1,\ldots,n/m\}$, let
$\overline{H}_i$ be the matrix group induced by $\overline{H}$ in its
action on $V_i$. In particular, $H$ is the kernel of the
$T_\omega$-action on the $V_i$. Furthermore, we have $\overline{H}_i=\GL(V_i)$, $\GU(V_i)$,
$\Sp_m(V_i)$ and $\OO^\xi(V_i)$ respectively. Note that $H\unlhd
T_\omega$. From~\cite[Chapter~$2$]{KL}, we see that $\overline{H}_i$
acts irreducibly on $V_i$, except when $\overline{H}_i\cong\GL_1(q)$ and
$q=2$, or $\overline{H}_i\cong\OO_2^+(q)$ and $q=2,3$. Furthermore, for
each distinct $i$ and $j$, and for each $a_i\in\overline{H}_i$, there
exists $a_j\in\overline{H}_j$ with $a_ia_j\in
\overline{H}$. Finally, for each $i$, if $\overline{H}_i\not\cong
\GL_1(2)$, we see with a direct inspection that  $\overline{H}_i$ 
contains an element fixing no non-zero vector of $V_i$. This shows
that for $\overline{H}_i\not\cong \GL_1(2)$, $\OO_2^+(2)$ and
$\OO_2^{+}(3)$ we are in the position to apply Proposition~\ref{claaa}.

From 
Proposition~\ref{claaa}, the group
$\overline{H}$ fixes a unique direct sum decomposition of $V$ in $n/m$
vector spaces of dimension $m$. Assume that $H$ fixes $\omega'$ and write
 $\omega'=\omega^g$, for some $g\in T$. Let $\overline{g}\in G$ be an
 element projecting to $g$ in $T$. Now, $\overline{H}$ fixes the
 direct sum decomposition $V_1^{\overline{g}}\oplus \cdots\oplus
 V_{n/m}^{\overline{g}}$. From Proposition~\ref{claaa}, we obtain that
 $\overline{g}$ stabilizes the direct sum decomposition
 $V_1\oplus\cdots\oplus V_{n/m}$. So from the maximality of $M$ in
 $A$, we have that $g\in M\cap T=T_\omega$ and $\omega'=\omega$. 
\end{proof}

\begin{proof}[Proof of Theorem~A for the classical groups]
By the results in Section~\ref{alt}, we may assume that $T$ is one of: 
$\PSL_n(q)$ for $n\geq 2$ with $(n,q)\neq (4,2)$ and, if $n=2$, then
$q\geq 7$ and   $q\neq 9$; 
$\PSU_n(q)$ with $n\geq 3$ and $(n,q)\neq (3,2)$;  
$\PSp_n(q)$ with $n\geq 4$ and $(n,q)\neq (4,2)$; $\POmega_n(q)$ with
$n\geq 7$ and $nq$ odd;  or
$\POmega_n^{\pm}(q)$ with $n\geq 8$ and $n$ even.  Write $q=p^f$ for
some prime $p$ and some $f\geq 1$. We assume that $T$ is transitive on $\Omega$ and that, for $\omega\in\Omega$, $T_\omega=T\cap M$, where $M$ is a maximal subgroup of some almost simple group $A$ with socle $T$, and $T\not\subseteq M$.

In order to avoid a long list of exceptions in some general arguments that
we use later in the proof, we first deal with the case $T=\PSL_2(q)$
and we use Dickson's classification of the subgroup lattice of $T$
(see~\cite[Section~$3.6$, Theorem~$6.25$,~$6.26$]{Suzuki}). As above
$q\geq 7$ and $q\neq 9$. If $T_\omega\cong \Sym(3)$,
$\Alt(4)$, $\Sym(4)$ or $\Alt(5)$ (that is, $T_\omega$ is as
in~\cite[Theorem~$6.25$~(c)]{Suzuki}), then by direct inspection we see that
$\mu(T_\omega)\leq 2$ and the result follows from Lemma~\ref{id2} (applied with $N=T_\omega$). Assume that $M$ contains the stabilizer of a subfield of $\mathbb{F}_q$, that is, $T_\omega=M\cap T\cong \PSL(2,r)$ or
$\PGL(2,r)$ for $r=p^s$ with $s$ dividing $f$ (that is, $T_\omega$ is as
in~\cite[Theorem~$6.25$~(d)]{Suzuki}). If $r\neq 2$ or $3$, then from
Proposition~\ref{emb1} each set of pairwise coprime non-trivial subdegrees of $T$ has size at most two. If $r=2$ or $3$, then we have already
dealt with these cases as $\PSL_2(2)=\PGL_2(2)\cong
\Sym(3)$, $\PSL_2(3)\cong \Alt(4)$ and $\PGL_2(3)=\Sym(4)$.

Assume that  $M$ contains a parabolic subgroup, that is, $T_\omega$ is
a Borel subgroup of $T$ (here $T_\omega$ is as
in~\cite[Theorem~$6.25$~(a)]{Suzuki}). In particular, the action of $T$
on $\Omega$ is permutation equivalent to 
the action of $T$ on the projective line. Therefore $T$ is $2$-transitive
and has only one non-trivial subdegree, namely $q$.

Assume that $M$ contains the normalizer of a maximal torus of $T$,
that is, $T_\omega$ is a dihedral group of order $2(q\pm
1)/\gcd(2,q-1)$ (here $T_\omega$ 
is as in~\cite[Theorem~$6.25$~(b)]{Suzuki}). If $T_\omega$ is a $2$-group, then
every non-trivial subdegree of $T$ is even. Suppose that $T_\omega$ is
not a $2$-group and let 
$r$ be a prime with $r\mid |T_\omega|$ and $r\neq 2$. Let $R$ be a
Sylow $r$-subgroup of $T_\omega$. From the description of the subgroup
lattice of $T$ in~\cite[Theorem~$6.25$,~$6.26$]{Suzuki}, we see that
$R$ is a Sylow $r$-subgroup of $T$ and $\N T R\leq T_\omega$. In
particular, from 
Proposition~\ref{sylow}, every non-trivial subdegree of $T$ is
divisible by $r$. This concludes the analysis for $\PSL_2(q)$.

Now, to avoid a few more small exceptions in the general arguments below we
consider separately the cases where $T=\PSL_3(3)$, $\PSL_3(4)$,
$\PSL_4(3)$, $\PSU_3(3)$, $\PSU_4(3)$ and 
$\PSp_4(3)$. In each of 
these groups, we  see with a direct inspection with 
\texttt{magma}~\cite{magma} or with~\cite{ATLAS} that the theorem
holds true. Finally, for the remaining cases we use Aschbacher's
theorem and in particular we use extensively Tables~$3.5A$--$F$
in~\cite{KL}.  

\smallskip

\noindent\textsc{Case $M\in\mathcal{C}_1$: }$M$ is the stabilizer of 
totally singular or non-singular subspaces. 

\noindent We first consider the case that $M$ is of type
$P_m$, that is, $M$ is a maximal parabolic subgroup of $A$. In
particular, $M$ and, hence also $T_\omega$, contain the normalizer of a
Sylow $p$-subgroup of $T$. It
follows from Proposition~\ref{sylow} that every non-trivial subdegree
of $T$ is divisible by $p$. 

Now suppose that $M$ is of type
$\GL_m(q)\oplus\GL_{n-m}(q)$ if $T=\PSL_n(q)$, of type  $\GU_m(q)\perp
\GU_{n-m}(q)$ if $T=\PSU_n(q)$, of type $\Sp_m(q)\perp\Sp_{n-m}(q)$ if
$T=\PSp_n(q)$, of type $\OO_m(q)\perp\OO_{n-m}^{\varepsilon}(q)$ if
$T=\POmega_{n}(q)$, of type $\OO_m^\varepsilon(q)\perp
\OO_{n-m}^{\varepsilon}(q)$ if $T=\POmega_{n}^+(q)$, and of type
$\OO_{m}^\varepsilon\perp\OO_{n-m}^{-\varepsilon}(q)$ if
$T=\POmega_n^-(q)$. Note that, from~\cite[Table~$3.5A$--$F$]{KL}, we
take $m<n-m$ (except for $T=\POmega_n(q)$ and possibly for
$T=\POmega_{n}^-(q)$). Moreover, if 
$T=\POmega_{n}^-(q)$ and $n=2m$, then $m$ is even and $M$ is of type
$\OO_m^+(q)\perp\OO_m^{-}(q)$ with
$\POmega_m^{+}(q)\not\cong\POmega_{m}^{-}(q)$
(see~\cite[Proposition~$4.1.6$]{KL}). With a direct inspection in each
of these cases and using~Table~\ref{table2}, we see
that either $(i):$ each simple direct factor of $\E{T_\omega}/\Z{\E
  {T_\omega}}$ has multiplicity at most two and there exists a
unique factor having size strictly bigger than the others, or~$(ii):$
$\E{T_\omega}/\Z{\E{T_\omega}}$ is the direct product of pairwise
isomorphic simple groups, or~$(iii):$ $T_\omega$ is
soluble. Indeed,~$(iii)$ arises if and only if $T=\PSL_n(q)$ and
$(n,m,q)=(3,1,2),(3,1,3)$, or $T=\PSU_n(q)$ and
$(n,m,q)=(3,1,3),(4,1,2)$, or $T=\POmega_n(q)$ and
$(n,m,q,\varepsilon)=(7,3,3,+)$. In each of these cases, we see
from~\cite[Proposition~$4.1.4$,~$4.1.6$]{KL} 
that $T_\omega$ is a $\{2,3\}$-group. So $\mu(T_\omega)\leq 2$ and the
result follows from Lemma~\ref{id2}. Moreover, if~$(i)$ or~$(ii)$
holds, then from~\cite[Proposition~$4.1.3$--$4$,~$4.1.6$]{KL}
$\cent{T_\omega}{\E{T_\omega}}$ is 
soluble and hence  the theorem follows from Proposition~\ref{emb2}
or~\ref{emb1} respectively.    

Now suppose that $T=\PSL_n(q)$ and that $M$ is of type
$P_{m,n-m}$. From~\cite[Proposition~$4.1.22$]{KL}, we see that $M$ contains
a parabolic subgroup 
(not necessarily maximal) of $T$. Therefore $T_\omega$
contains a Borel subgroup of $T$ and so, $T_\omega$ contains the
normalizer of a Sylow $p$-subgroup of $T$. Now from
Proposition~\ref{sylow}, every non-trivial subdegree of $T$ is
divisible by $p$. 

It remains to consider the case that $T=\POmega_{n}^{\pm}(q)$ and $M$
is of type $\Sp_{n-2}(q)$ with $q$
even. From~\cite[Proposition~$4.1.7$]{KL}, we see that 
$\cent {T_\omega}{\E{T_\omega}}$ is soluble and that
  $\E{T_\omega}/\Z{\E{T_\omega}}\cong \PSp_{n-2}(q)$ is
simple. Therefore each set of pairwise coprime non-trivial subdegrees of $T$ has size at most two, by Proposition~\ref{emb1}.

\smallskip

\noindent\textsc{Case $M\in \mathcal{C}_2$: }$M$ is the stabilizer of a
direct sum decomposition.

\noindent We first consider the case that $M$ is of type $\GL_{n/2}(q^2).2$ if
$T=\PSU_n(q)$, of type $\GL_{n/2}(q).2$ if $T=\PSp_n(q)$, of type
$\GL_{n/2}(q).2$ 
or $\OO_{n/2}(q)^2$ (with $n/2\geq 5$ odd) if $T=\POmega_n^+(q)$, and
of type $\OO_{n/2}(q)^2$ (with $n/2\geq 5$ odd) if 
$T=\POmega_n^-(q)$. From~\cite[Proposition~$4.2.4$--$5$,~$4.2.7$,~$4.2.16$]{KL},
we see 
 that $\cent {T_\omega}{\E{T_\omega}}$ is soluble, and that either
$\E{T_\omega}/\Z{\E{T_\omega}}\cong S^\ell$ for some nonabelian simple
group $S$ (here $\ell=1$ or $2$) or $T_\omega$ is soluble. In the
former case, from Proposition~\ref{emb1}, each set of pairwise coprime non-trivial subdegrees of $T$ has size at most two.
The latter case occurs 
only for $T=\PSp_4(3)$, which we excluded from this analysis.

In the rest of  the proof of this case we use the detailed information
on the Sylow normalizers of the Lie type groups in~\cite[Section~$5$]{GM}.
Given a connected reductive algebraic group $\bf{G}$ defined over a finite
field $\mathbb{F}_q$ and $F:\bf{G}\to \bf{G}$ the corresponding Frobenius
endomorphism, we adopt the terminology in~\cite{GM} for the Sylow $\Phi_e$-tori
of $\bf{G}$ and we refer to as Sylow $\Phi_e(q)$-tori their subgroups of
fixed points (under $F$) in the finite Lie type group $G=\bf{G}^F$.
Furthermore, we deal
with each family of classical groups separately. In fact, although the
arguments are very similar in every case, there are some slight
differences that can be presented neatly only by dealing with one family
at a time. 

\smallskip\noindent
\textsc{The groups }$T=\PSL_n(q)$. Assume that $M$ is of type
$\GL_m(q)\wr 
\Sym(n/m)$ with $m\geq 1$. Let 
$\mathbb{F}_q^n=V_1\oplus\cdots\oplus V_{n/m}$ be the direct sum
decomposition preserved by $T_\omega$ and let $H$ be
the normal subgroup of $T_\omega$ fixing every direct summand $V_i$, for
$i\in \{1,\ldots,n/m\}$. If $m\geq 3$, or $m=2$ and $q\geq 4$, we see 
from~\cite[Proposition~$4.2.9$]{KL} that
$\cent{T}{T_\omega}$ is soluble and that
$\E{T_\omega}/\Z{\E{T_\omega}}$ is isomorphic to a direct product of
pairwise isomorphic nonabelian simple groups. So from
Proposition~\ref{emb1}, each set of pairwise coprime non-trivial subdegrees of $T$ has size at most two. This leaves the cases $m=1$, and $(m,q)=(2,2)$ and $(2,3)$.

Assume next that $m=1$. From the structure and from the order of
$T_\omega$ we see that $T_\omega$ is the normalizer of a Sylow
$\Phi_1(q)$-torus $S_1$ of $T$, that is, $T_\omega=\N T {S_1}$. Recall that $n\geq 3$. Let $r$ be the
largest prime dividing $q-1$. Now, if $r>3$, or if $r=2$ and $q\equiv
1\mod 4$, then 
from~\cite[Theorems~$5.14$   and~$5.19$]{GM}  we obtain that $\N T
{S_1}$ contains the normalizer of a Sylow $r$-subgroup of $T$. I this case
every non-trivial subdegree of $T$ is 
divisible by $r$ by Proposition~\ref{sylow}. It remains to consider
the case that either $q=2$, or $2$ and $3$ are 
the only primes dividing $q-1$. Assume that $q=2$. If $n\leq 4$, then
$T_\omega$ is a $\{2,3\}$-group, so $\mu(T_\omega)\leq 2$ and the result
follows from Lemma~\ref{id2}. Suppose that $n\geq 5$ and let $N$ be
the last term of the derived series of $T_\omega$. From
Lemma~\ref{derivedSeries}, the group $N$ fixes a unique point of
$\Omega$, and the result follows from Lemmas~\ref{id1}
and~\ref{id2}. So, we may now assume that $q\neq 2$. If $3$ divides
$q-1$, then    
from~\cite[Theorems~$5.14$]{GM}  we obtain that either $\N T
{S_1}$ contains the normalizer of a Sylow $3$-subgroup of $T$ or
$n=3$. In the former case, every non-trivial subdegree of $T$ is 
divisible by $3$ from Proposition~\ref{sylow}. In the latter case, as
$q-1$ is only divisible by 
the primes $2$ and $3$, we have that $T_\omega$ is a $\{2,3\}$-group and the
result follows from Lemma~\ref{id2}. Therefore, it remains to deal
with the case that $2$ is the only prime dividing $q-1$  and $q\equiv 3\mod
4$, that is, $q=3$. We do this in the following paragraph.

Assume $(m,q)=(1,3)$, $(2,2)$ or $(2,3)$. If $n/m\leq 4$, then
$T_\omega$ is a $\{2,3\}$-group and the result follows from
Lemma~\ref{id2}. Suppose that $n/m\geq 5$. From
Proposition~\ref{claaa1}, the kernel $H$ of the $T_\omega$-action on the direct summands $V_i$ of $V$ fixes a unique point of 
$\Omega$. In each case $H$ is a $\{2,3\}$-group and hence $\mu(H)\leq 2$ and
the result follows from Lemma~\ref{id2}.
The analysis for the remaining classical groups is very similar.

\smallskip\noindent
\textsc{The groups } $T=\PSU_n(q)$. Assume that $M$ is of type $\GU_m(q)\wr
\Sym(n/m)$ with $m\geq 1$. Let 
$\mathbb{F}_{q^2}^n=V_1\oplus\cdots\oplus V_{n/m}$ be the direct sum
decomposition preserved by $T_\omega$ and let $H$ be
the normal subgroup of $T_\omega$ fixing every direct summand $V_i$, for
$i\in \{1,\ldots,n/m\}$.  If $m\geq 4$, or if $m=3$ and $q\geq 3$, or
if $m=2$ and $q\geq 4$, we see
from~\cite[Proposition~$4.2.9$]{KL}  that $\cent
T{\E{T_\omega}}$ is soluble and that $\E{T_\omega}/\Z{\E{T_\omega}}$
is isomorphic to the direct product of pairwise isomorphic nonabelian
simple groups. So from Proposition~\ref{emb1}, $T$ has at most $2$
non-trivial coprime subdegrees. We now consider the remaining cases, namely, $m=1$ and $(m,q)=(2,2), (2,3)$ and $(3,2)$. 

Assume that $m=1$. Now the order of $\GU_1(q)$ is divisible by $q+1$
and so  $T_\omega$ is the normalizer of a Sylow $\Phi_2(q)$-torus
$S_2$ of $T$. Set $r=2$ if $q\equiv 3\mod 4$, or choose the largest
prime $r>2$ dividing $q+1$ if $q\not\equiv 3\mod
4$. From~\cite[Theorem~$5.14$,~$5.19$]{GM}, we have that either
$T_\omega$ contains the normalizer of a Sylow $r$-subgroup of $T$ (and
hence every non-trivial subdegree of $T$ is divisible by $r$ from
Proposition~\ref{sylow}) or $n=3$ and $r=3$. In the latter case, by
our choice of $r$, the only primes dividing $q+1$ are $2$ and
$3$. Since $n=3$, we obtain that $T_\omega$ is a $\{2,3\}$-group and
by Lemma~\ref{id2}, $T$ has at most $2$ non-trivial coprime
subdegrees. 

Assume that $(m,q)=(2,2)$, $(2,3)$ or $(3,2)$.  If $n/m\leq 4$, then
$T_\omega$ is a $\{2,3\}$-group and the result follows from
Lemma~\ref{id2}. Suppose that $n/m\geq 5$. From
Proposition~\ref{claaa1}, the group $H$ fixes a unique point of
$\Omega$. As $H$ is a $\{2,3\}$-group, we obtain $\mu(H)\leq 2$ and
the result follows from Lemma~\ref{id2}.

\smallskip\noindent
\textsc{The groups } $T=\PSp_n(q)$. Assume that $M$ is of
type $\Sp_m(q)\wr 
\Sym(n/m)$ with $m\geq 2$ even.   Let 
$\mathbb{F}_{q}^n=V_1\oplus\cdots\oplus V_{n/m}$ be the direct sum
decomposition preserved by $T_\omega$ and let $H$ be
the normal subgroup of $T_\omega$ fixing every direct summand $V_i$, for
$i\in \{1,\ldots,n/m\}$. If $m\geq 4$, or if $m=2$ and $q\geq 4$, we see
from~\cite[Proposition~$4.2.10$]{KL}  that $\cent
T{\E{T_\omega}}$ is soluble and that $\E{T_\omega}/\Z{\E{T_\omega}}$
is isomorphic to a direct product of pairwise isomorphic nonabelian
simple groups. So from Proposition~\ref{emb1}, $T$ has at most $2$
non-trivial coprime subdegrees. We now consider the remaining cases. 

Assume that  $(m,q)=(2,2)$ or $(2,3)$. If $n/m\leq 4$, then $T_\omega$
is a $\{2,3\}$-group and hence the result follows from
Lemma~\ref{id2}.  Suppose that $n/m\geq 5$. From
Proposition~\ref{claaa1}, the group $H$ fixes a unique point of
$\Omega$. As $H$ is a $\{2,3\}$-group, we obtain $\mu(H)\leq 2$ and
the result follows from Lemma~\ref{id2}.

\smallskip\noindent
\textsc{The groups }$T=\POmega_n(q)$ ($n$ odd). Assume that $M$ is of
type $\OO_m(q)\wr 
\Sym(n/m)$ with $m\geq 1$ (where $q=p\geq 3$ if $m=1$). Let 
$\mathbb{F}_{q}^n=V_1\oplus\cdots\oplus V_{n/m}$ be the direct sum
decomposition preserved by $T_\omega$ and let $H$ be
the normal subgroup of $T_\omega$ fixing every direct summand $V_i$, for
$i\in \{1,\ldots,n/m\}$. If $m\geq 5$,
or if $m=3$ and $q\neq 3$, we see
from~\cite[Proposition~$4.2.12$]{KL}  that $\cent
T{\E{T_\omega}}$ is soluble and that $\E{T_\omega}/\Z{\E{T_\omega}}$
is isomorphic to a direct product of pairwise isomorphic nonabelian
simple groups. So from Proposition~\ref{emb1}, $T$ has at most $2$
non-trivial coprime subdegrees. We now consider the remaining cases. 

Assume $m=1$ or $(m,q)=(3,3)$. Note that $\OO_1(q)$ has order $2$ and
is generated by $-1$. If $n/m\leq 4$, then $T_\omega$
is a $\{2,3\}$-group and hence the result follows from
Lemma~\ref{id2}.  Suppose that $n/m\geq 5$. From
Proposition~\ref{claaa1}, the group $H$ fixes a unique point of
$\Omega$. As $H$ is a $\{2,3\}$-group, we obtain $\mu(H)\leq 2$ and
the result follows from Lemma~\ref{id2}.

\smallskip\noindent
\textsc{The groups } $T=\POmega_n^+(q)$ ($n$ even). Assume that $M$ is of type
$\OO_m^\varepsilon(q)\wr \Sym(n/m)$ with $\varepsilon\in\{\circ,+,-\}$
(where $\varepsilon^{n/m}=+$ if $m$ is even) and with $q=p\geq 3$ if
$m=1$.  Let 
$\mathbb{F}_{q}^n=V_1\oplus\cdots\oplus V_{n/m}$ be the direct sum
decomposition preserved by $T_\omega$ and let $H$ be
the normal subgroup of $T_\omega$ fixing every direct summand $V_i$, for
$i\in \{1,\ldots,n/m\}$. If $m\geq 5$, or if $m=4$ and $q\geq 4$, or
if $m=4$ and $\varepsilon=-$, or if $m=3$ and $q\neq 3$, we see
from~\cite[Proposition~$4.2.11$,~$4.2.14$]{KL}  that $\cent
T{\E{T_\omega}}$ is soluble and that $\E{T_\omega}/\Z{\E{T_\omega}}$
is isomorphic to a direct product of pairwise isomorphic nonabelian
simple groups. So from Proposition~\ref{emb1}, $T$ has at most $2$
non-trivial coprime subdegrees. We now consider the remaining cases. 

Assume that $m=1$, or
$(m,q,\varepsilon)=(3,3,\circ)$, $(4,2,+)$ or $(4,3,+)$ (recall from~\cite[Table~$4.2A$]{KL} that if $m=1$ then $q=p\geq
3$). In each of these cases, $H$ is a 
$\{2,3\}$-group. If 
$n/m\leq 4$, then $T_\omega$ 
is a $\{2,3\}$-group and hence the result follows from
Lemma~\ref{id2}.  Suppose that $n/m\geq 5$. From
Proposition~\ref{claaa1}, the group $H$ fixes a unique point of
$\Omega$, $\mu(H)\leq 2$ and
the result follows from Lemma~\ref{id2}.

Assume $m=2$. Note that if $\varepsilon=-$, then $n/2$ is even because
$\varepsilon^{n/2}=+$. Now,
$\OO_2^{+}(q)$ is a dihedral group of order $2(q-1)$, and $\OO_2^-(q)$ is
a dihedral group of order $2(q+1)$. The 
largest power of the polynomial $x-1$ dividing the generic order of
$\POmega_n^+$ is $n/2$. Similarly, if $n/2$ is even, the
largest power of the polynomial $x+1$ dividing the generic order of
$\POmega_n^+$ is $n/2$.  Therefore, considering the structure of
$T_\omega$ and its order, we obtain that $T_\omega$ is the
normalizer of a $\Phi_1(q)$-torus $S_1$ of $T$ if $\varepsilon=+$ and is the
normalizer of a $\Phi_2(q)$-torus $S_2$ of $T$ if
$\varepsilon=-$.  Assume first that $\varepsilon=-$. 
Set 
$r=2$ if $q\equiv 3\mod 4$, or choose a prime $r$ dividing $q+1$ and
coprime to $q-1$ if 
$q\not\equiv 3\mod 4$. From~\cite[Theorem~$5.14$,~$5.19$]{GM}, $T_\omega$
contains the normalizer of a Sylow $r$-subgroup of $T$. In this case every
non-trivial subdegree of $T$ is divisible by $r$ by
Proposition~\ref{sylow}.  
Assume now that $\varepsilon=+$. Then each $V_i$ is a hyperbolic plane for its stabilizer 
$M_i \cong \OO_2^+(q)$ in $M$. As any hyperbolic plane contains exactly two isotropic lines, then
$M$ is the stabilizer in $A$ of a decomposition of $V$ in $1$-dimensional spaces. So we are back
to the case $m=1$, which has already been considered.   

\smallskip\noindent
\textsc{The groups }$T=\Omega_n^-(q)$ ($n$ even). Assume that $M$ is of type
$\OO_m^\varepsilon(q)\wr \Sym(n/m)$ with $\varepsilon\in\{\circ,-\}$
and with $q=p\geq 3$ if $m=1$. Let 
$\mathbb{F}_{q}^n=V_1\oplus\cdots\oplus V_{n/m}$ be the direct sum
decomposition preserved by $T_\omega$ and let $H$ be
the normal subgroup of $T_\omega$ fixing every direct summand $V_i$, for
$i\in \{1,\ldots,n/m\}$. If $m\geq 4$, or if $m=3$ and $q\ne
3$, we see from~\cite[Proposition~$4.2.11$,~$4.2.14$]{KL}  that $\cent
T{\E{T_\omega}}$ is soluble and that $\E{T_\omega}/\Z{\E{T_\omega}}$
is isomorphic to a direct product of pairwise isomorphic nonabelian
simple groups. So from Proposition~\ref{emb1}, $T$ has at most $2$
non-trivial coprime subdegrees.  

Assume that $m=1$ or $(m,q)=(3,3)$  (recall that if $m=1$ then $q=p\geq
3$). In each of these cases, $H$ is a
$\{2,3\}$-group. If 
$n/m\leq 4$, then $T_\omega$ 
is a $\{2,3\}$-group and hence the result follows from
Lemma~\ref{id2}.  Suppose that $n/m\geq 5$. From
Proposition~\ref{claaa1}, the group $H$ fixes a unique point of
$\Omega$, $\mu(H)\leq 2$ and
the result follows from Lemma~\ref{id2}.

Assume that $m=2$. Note that from~\cite[Table~$3.5F$]{KL}, $n/2$ is
odd. Now, $\OO_2^{-}(q)$ has order divisible by $q+1$. Since $n/2$ is
odd, the largest power of the polynomial $x+1$ dividing the generic
order of $\POmega_n^-$ is $n/2$. Therefore, considering the structure
of $T_\omega$ and its order, we obtain that $T_\omega$ is the
normalizer of a $\Phi_2(q)$-torus of $T$. Set $r=2$ if $q\equiv 3\mod
4$, or choose a prime $r$ dividing $q+1$ and coprime to $q-1$ if
$q\not \equiv 3\mod 4$. From~\cite[Theorem~$5.14$,~$5.19$]{GM}, $T_\omega$
contains the normalizer of a Sylow $r$-subgroup of $T$. So every
non-trivial subdegree of $T$ is divisible by $r$ from
Proposition~\ref{sylow}. 

\smallskip

\noindent\textsc{Case $M\in \mathcal{C}_3:$ }$M$ is the stabilizer of a structure on $V$ as an $n/r$-dimensional space over an extension field of $\mathbb{F}_q$ of prime index $r$.

\noindent From~\cite[Tables~$3.5A$--$F$]{KL}, we see that $M$ is of type
$\GL_m(q^r)$ if $T=\PSL_n(q)$, of type $\GU_m(q^r)$ if $T=\PSU_n(q)$,
of type $\Sp_m(q^r)$ or $\GU_{n/2}(q)$ (with $q$ odd) if
$T=\PSp_n(q)$, of type $\OO_{n/r}(q^r)$ (with $n/r\geq 3$) if
$T=\POmega_n(q)$, of type $\GU_{n/2}(q)$,  $\OO_{n/r}^+(q^r)$ (with $n/r\geq 4$), or $\OO_{n/2}(q^2)$ (with $nq/2$ odd) if $T=\POmega_n^+(q)$, and of type 
$\GU_{n/2}(q)$, $\OO_{n/r}^-(q^r)$ (with $n/r\geq 4$), or $\OO_{n/2}(q^2)$ (with $nq/2$ odd) if
$T=\POmega_n^-(q)$. From~\cite[Section~$4.3$]{KL}, the group
$\cent{T_\omega}{\E{T_\omega}}$ is soluble.  Furthermore, in each of the cases, considering the restrictions on $n$, $q$ and $r$ that we have given above, we  see from Table~\ref{table2} that either $T_\omega$ is soluble or
$\E{T_\omega}/\Z{\E{T_\omega}}\cong S^\ell$ for some nonabelian simple
group $S$ (where $\ell=1$, or $\ell=2$ if $T=\POmega_{4r}^+(q)$ and
$M$ is of type $\OO_4^+(q^r)$). In the latter case, the theorem
follows from Proposition~\ref{emb1}. Assume now that $T_\omega$ is soluble. Since we are excluding
$T=\PSp_4(3)$, with a direct inspection we see that  $T=\PSL_r(q)$ or
$\PSU_r(q)$, and in particular that $r\geq3$. 

From~\cite[Proposition~$4.3.6$]{KL}, the group $T_\omega$
is isomorphic to $\mathbb{Z}_a\rtimes \mathbb{Z}_r$ with
$a=(q^r-\varepsilon)/((q-\varepsilon)\gcd(q-\varepsilon,r))$ (here
$\varepsilon=1$ if $T=\PSL_r(q)$ and $\varepsilon=-1$ if
$T=\PSU_r(q)$). In particular, $T_\omega$ is the normalizer of a $\Phi_1(q)$-torus of $T$ if $T=\PSL_r(q)$ and is the normalizer of a $\Phi_2(q)$-torus of $T$ if $T=\PSU_r(q)$. From Zsigmondy's theorem, we see that there exists a prime $s$ dividing $q^r-\varepsilon$ and coprime to $q^i-\varepsilon$ for every $i\in\{1,\ldots,r-1\}$ (note that $r\geq 3$ is prime and that we are excluding $\PSU_3(2)$ since it is soluble). Clearly, $s\geq 3$. Moreover if $3$ divides $q^r-\epsilon$, then $q-\epsilon\equiv 0\pmod{3}$ if $\epsilon=-$, and $q^2-\epsilon\equiv 0\pmod{3}$ if $\epsilon=+$; since $r\geq3$, this implies that $s\ne3$. Thus $s>3$.  From~\cite[Theorem~$5.14$]{GM}, we obtain that $T_\omega$
contains the normalizer of a Sylow $s$-subgroup of $T$, and hence every non-trivial subdegree of $T$ is divisible by $r$ by
Proposition~\ref{sylow}. 


\smallskip
 
\noindent\textsc{Case  $M\in \mathcal{C}_4$: }$M$ is the
stabilizer of a tensor product decomposition. 

\noindent From~\cite[Section~$3.5$]{KL}, we get that $M$ is of type
$\GL_m(q)\otimes \GL_{n/m}(q)$ if $T=\PSL_n(q)$ (with $n\neq m^2$), of type
$\GU_m(q)\otimes \GU_{n/m}(q)$ if $T=\PSU_n(q)$ (with $n\neq m^2$), of type
$\Sp_m(q)\otimes \OO_{n/m}^{\varepsilon}(q)$ if $T=\PSp_n(q)$, of type 
$\OO_m(q)\otimes \OO_{n/m}(q)$ (with $n\neq m^2$) if $T=\POmega_n(q)$, of type
$\Sp_m(q)\otimes \Sp_{n/m}(q)$ (with $n\neq m^2$) or
$\OO_m^{\varepsilon_1}(q)\otimes\OO_{n/m}^{\varepsilon_2}(q)$ if
$T=\POmega_n^+(q)$, and of type $\OO_{m}(q)\otimes \OO_{n/m}^-(q)$ if
$T=\POmega_n^-(q)$. Note that if $T=\PSp_n(q)$, then $q$ is odd
(see~\cite[Table~$3.5C$]{KL}). With a direct inspection
 (using~\cite[Proposition~$4.4.10$--$12$,~$4.4.14$,~$4.4.17$,~$4.4.18$]{KL})
we see that $\cent {T_\omega}{\E{T_\omega}}$ is soluble. We claim that
either~$(i):$ $T_\omega$ is soluble, or~$(ii):$  $\E{T_{\omega}}/\Z{\E{T_\omega}}$
 is a direct product of (at least one) isomorphic
simple groups, or ~$(iii):$ each simple direct factor of
$\E{T_{\omega}}/\Z{\E{T_\omega}}$ has multiplicity  one, or $(iv):$ 
$T=\PSp_{4m}(q)$ and $M$ is of type $\PSp_m(q)\otimes
\SL_2(q)\otimes\SL_2(q)$, or $T=\POmega_{4m}^+(q)$ and $M$ is of type 
$\OO_{m}^{\varepsilon}(q)\otimes\SL_2(q)\otimes\SL_2(q)$.

As usual,~$(i)$ 
occurs only for small values of $n$ and
$q$. (Recall that $q$ is odd if
$T=\PSp_n(q)$, and $m,n/m\geq 4$ if $\varepsilon_1=\varepsilon_2=+$ and $T=\POmega_n^+(q)$~\cite[Proposition~$4.14$]{KL}.) 
Namely,~$(i)$ arises when 
$n=6$, $q=2$  
and $T=\PSU_6(2)$, when $n=6,8$, $q=3$ and $T=\PSp_n(q)$ (here $M$ is
of type $\Sp_2(3)\otimes \OO_3(3)$ or $\Sp_2(3)\otimes \OO_4^+(3)$),
and when $n=12$, $q=3$ and 
$T=\POmega_n^+(q)$ (here $M$ is of type
$\OO_3(3)\otimes\OO_4^+(3)$). In 
each of these cases,
from~\cite[Proposition~$4.4.10$,~$4.4.14$,~$4.4.17$]{KL} we see that
$M$ is a $\{2,3\}$-group. Thence $\mu(T_\omega)\leq 2$ and the theorem
follows from Lemma~\ref{id2}. 

Now we consider~$(ii)$. Again this occurs in a small list
of cases, typically when one of the two central factors in the type of
$M$ is soluble. Namely,~$(ii)$ arises for $T=\PSL_n(q)$ when $m=2$, $q=2,3$; for
$T=\PSU_n(q)$ when
$(m,q)=(2,2),(2,3),(3,2)$; for $T=\PSp_n(q)$ when $m=2$ and $q=3$, or
$n=3m$ and $q=3$, or $n=4m$ and $q=3$, or $n=20$ (here $M$ is of type
$\Sp_4(q)\otimes \OO_5(q)$), or $n=6$ (here $M$ is of type
$\Sp_2(q)\otimes \OO_3(q)$), or $n=8$ (here $M$ is of type
$\Sp_2(q)\otimes\OO_4^+(q)$); for $T=\POmega_n(q)$ when $m=3$, $q=3$ (recall that $n\neq m^2$);
for $T=\POmega_n^+(q)$ when $m=2$ and $q=2,3$, or $m=3$ and 
$q=3$ (here $M$ is of type $\OO_3(q)\otimes
\OO_{n/3}^{\varepsilon_2}(q)$), or $m=4$ and $q=3$ (here $M$ is of type
$\OO_4(q)^+\otimes \OO_{n/4}^{\varepsilon}(q)$); for $T=\POmega_n^-(q)$
when $m=3$, $q=3$. In each of
these cases, from Proposition~\ref{emb1} the result follows.

Now, if $\E{T_\omega}/\Z{\E{T_\omega}}$ is as in~$(iii)$, then from
Proposition~\ref{emb2} the group $T$ has at most two non-trivial
coprime subdegrees.

Finally, with a direct inspection on the type of $M$, we see that if
$(i)$,~$(ii)$ and~$(iii)$ do not hold, then both central 
factors of $M$ are insoluble and one of the two is the central product
of smaller quasisimple groups. From Table~\ref{table2}, this happens
only when $\OO_4^+(q)$ is one of the central factors of $M$. Now
from~\cite[Table~$3.5A-F$]{KL}, we see that either $T=\PSp_{4m}(q)$
or $T=\POmega_{4m}^+(q)$ and our claim is proved. In~$(iv)$ we
may use Proposition~\ref{emb2} to conclude that $T$ has at most two
non-trivial coprime subdegrees.

\smallskip

\noindent\textsc{Case $M\in \mathcal{C}_5$: } $M$ is the stabilizer of a
subfield of $\mathbb{F}_q$ of prime index $r$.  

\noindent From~\cite[Section~$3.5$]{KL}, $M$ is of type
$\GL_n(q^{1/r})$ if $T=\PSL_n(q)$, of type $\GU_n(q^{1/r})$,
$\OO_n^{\varepsilon}(q)$ or $\Sp_n(q)$ if $T=\PSU_n(q)$, of type
$\Sp_n(q^{1/r})$ if $T=\PSp_n(q)$, of type $\OO_n(q^{1/r})$ if
$T=\POmega_n(q)$, of type $\OO_n^+(q^{1/r})$ or $\OO_n^-(q^{-1})$ if
$T=\POmega_n^+(q)$, and of type $\OO_n^-(q^{1/r})$ if
$T=\POmega_n^{-}(q)$. Since we are excluding the cases $T=\PSU_3(3)$
and $\PSU_4(3)$, the group $\E{T_\omega}/\Z{\E{T_\omega}}$ is either
simple, or a direct product of two isomorphic simple groups (which
occurs when $T=\PSU_4(q)$ and $M$  is of type $\OO_4^+(q)$), or
$T=\PSU_3(2^r)$. In the third case, we see
from~\cite[Proposition~$4.5.3$~(II)]{KL}, that $M$ is a
$\{2,3\}$-group, and then $\mu(T_\omega)\leq 2$ and the result follows
from Lemma~\ref{id2}. In the remaining cases,
from~\cite[Proposition~$4.5.3$--$6$,~$4.5.8$,~$4.5.10$]{KL},
we see that $\cent {T_\omega}{\E{T_\omega}}$ is soluble and so, from
Proposition~\ref{emb1}, $T$ has at most two non-trivial coprime
subdegrees. 

\smallskip

\noindent\textsc{Case $M\in\mathcal{C}_6$: }$M$ is the
normalizer of an extraspecial $r$-group in an absolutely irreducible
representation. 

\noindent From~\cite[Section~$3.5$]{KL}, the group $M$ is of type $r^{2m}\Sp_{2m}(r)$ if
$T=\PSL_n(q)$ or $T=\PSU_n(q)$ (with $n=r^m$), of type $2^{1+2m}\OO_{2m}^-(2)$ if
$T=\PSp_n(q)$ (with $n=2^m$), and of type $2_+^{1+2m}\OO_{2m}^+(2)$ if
$T=\POmega_n^+(q)$ (with $n=2^m$). From~\cite[Proposition~$4.6.5$--$6$,~$4.6.8$--$9$]{KL},
we see that $\cent{T_\omega}{\E{T_\omega}}$ is soluble. Furthermore, since we are excluding the group $\PSL_2(q)$ (which we studied in the first part of the proof), from Table~\ref{table2} we have that $T_\omega$ is soluble if and only if $T=\PSL_3(q),\PSU_3(q)$. (Recall that $n\geq 4$ if $T=\PSp_n(q)$ and $n\geq 8$ if $T=\POmega_n^+(q)$.) If $T=\PSL_3(q)$ or $\PSU_3(q)$, then with a direct
inspection of the structure of $M$ described
in~\cite[Proposition~$4.6.5$--$6$]{KL}, we see
that $M$ is a $\{2,3\}$-group and so $\mu(T_\omega)\leq
2$. Hence from Lemma~\ref{id2} the group $T$ has at most two
non-trivial coprime subdegrees. 

It remains to consider the case that $M$ is insoluble. Let $N$ be the
last term of the derived series of $M$. Since $M/T_\omega$ is soluble, we have $N\leq T_\omega$. Furthermore, from
the group structure of $M$, the group $N$ contains a
characteristic $r$-subgroup $R$ with $N/R\cong \Sp_{2m}(r)$ if
$T=\PSL_n(q),\PSU_n(q)$, with $N/R\cong (\OO_{2m}^-(2))'$ if
$T=\PSp_n(q)$, and with $N/R\cong (\OO_{2m}^+(2))'$ if
$T=\POmega_n^+(q)$. 

From Lemma~\ref{derivedSeries}, the group $N$ fixes only the point
$\omega$ of $\Omega$. We show that
$\mu(N)\leq 2$, from which the theorem follows (in this case) from
Lemma~\ref{id2}. From Lemma~\ref{lucky}, we have $\mu(N)\leq 3$. If $\mu(N)\leq 2$,
then the result follows from Lemma~\ref{id2}. Suppose that $\mu(N)=3$ and
 let $A_1,A_2,A_3$ be three maximal 
subgroups of $N$ having pairwise relatively prime indices in
$N$. Let $U$ be
the normal subgroup of $N$ with $R\leq U$ and with $N/U$ simple (that
is, $U/R=\Z{N/R}$). From Lemma~\ref{lucky}, relabelling the $A_i$ if necessary, 
we have that $r$ divides
$|N:A_3|$ and that $N=(A_1/U)(A_2/U)$ is a
maximal factorization of the 
simple group $N/U$ with $\gcd(|N:A_1|,|N:A_2|)=1$. Therefore
$(N/U,A_1/U,A_2/U)$ is one of the triples in~\cite[Table~$1$]{DGPS}.  
Suppose that $T=\PSL_n(q)$ or $\PSU_n(q)$, that is, $N/U\cong
\PSp_{2m}(r)$. From~\cite[Table~$1$]{DGPS}, we see that $r$ divides $|N:A_1|$ or $|N:A_2|$, contradicting the fact that $|N:A_3|$ is coprime with $|N:A_1|$ and with $|N:A_2|$. Now suppose that
$T=\PSp_n(q)$, that is, $N/U\cong
\POmega_{2m}^-(2)$. (Recall that $r=2$.) From~\cite[Table~$1$]{DGPS} we see that $\POmega_4^-(2)=\PSL_2(4)$ and $\POmega_{6}^-(2)=\PSU_4(2)$ are the only orthogonal groups $\POmega_n^-(2)$ admitting a coprime
factorization. Furthermore, $2$ divides $|N:A_1|$ or $|N:A_2|$, a contradiction. Finally suppose that
$T=\POmega_n^+(q)$, that is, $N/U\cong
\POmega_{2m}^+(2)$. From~\cite[Table~$1$]{DGPS}, we see that $2$ divides $|N:A_1|$ or $|N:A_2|$, again a contradiction because $r=2$.

\smallskip

\noindent\textsc{Case $M\in \mathcal{C}_7$: }$M$ is the stabilizer of a
homogeneous tensor decomposition of $V$. 

\noindent From~\cite[Section~$3.5$]{KL}, we have that the group $M$ is of type
$\GL_m(q)\wr\Sym(t)$ if $T=\PSL_n(q)$ (with $m\geq 3$), of type
$\GU_m(q)\wr \Sym(t)$ if $T=\PSU_n(q)$ (with $m\geq 3$ and $(m,q)\neq
(3,2)$), of type $\Sp_m(q)\wr\Sym(t)$ if $T=\PSp_n(q)$ (with $qt$ odd,
$m\geq 2$ and 
$(m,q)\neq (2,3)$), of type $\OO_m(q)\wr\Sym(n/m)$ if $T=\POmega_n(q)$
(with $m\geq 
3$ and $(m,q)\neq (3,3)$), of type $\Sp_m(q)\wr\Sym(t)$ (with $m\geq
2$ and $(m,q)\neq (2,2),(2,3)$) or $\OO_m^\varepsilon(q)\wr\Sym(t)$
(with $q$ odd, and $m\geq 6$ if $\varepsilon=+$ and $m\geq 4$ if
$\varepsilon=-$) if $T=\POmega_n^+(q)$. In particular, from
Table~\ref{table2} we see that 
$\E{T_\omega}/\Z{\E{T_\omega}}$ is a direct product of isomorphic
simple groups. Furthermore,
from~\cite[Proposition~$4.7.3$--$5$,~$4.7.6$--$8$]{KL} we see that
$\cent{T_\omega}{\E{T_\omega}}$ is soluble. Now as usual from
Proposition~\ref{emb1}, we obtain that $T$ has at most two non-trivial
coprime subdegrees.

\smallskip

\noindent\textsc{Case $M\in \mathcal{C}_8$: }$M$ is a classical
subgroup. 

\noindent From~\cite[Section~$3.5$]{KL}, we see that the group $M$ is of type $\Sp_n(q)$, $\OO_n^{\varepsilon}(q)$ or $\SU_n(q^{1/2})$ if $T=\PSL_n(q)$, and of type $\OO_n^{\pm }(q)$ if $T=\PSp_n(q)$ and $q$ is even. 

Since we are excluding the cases $T=\PSL_3(3)$, $\PSL_3(4)$,
$\PSL_4(2)$, $\PSL_4(3)$ and $\PSp_4(2)$, the group
$\E{T_\omega}/\Z{\E {T_\omega}}$ is either simple or a direct
product of two isomorphic simple groups (in fact, the latter case
occurs when $T=\PSL_4(q)$ or $\PSp_4(q)$ and $M$  is of type
$\OO_4^+(q)$). From~\cite[Proposition~$4.8.3$--$6$]{KL}, we see that
$\cent T{\E {T_\omega}}$ is soluble and so, from
Proposition~\ref{emb1}, $T$ has at most two non-trivial coprime
subdegrees on $\Omega$.  

\smallskip

\noindent\textsc{Case $M\in \mathcal{S}$.}

\noindent Since $M/T_\omega$ is soluble, we have $\E M=\E{T_\omega}$. From the definition of the class $\mathcal{S}$ in~\cite[Chapter~1]{KL}, we have that $\E{M}$ is a nonabelian simple group and $\cent {\Aut(T)} {\E M}=1$. Thus $\E{T_\omega}$ is simple and $\cent T{\E{T_\omega}}=1$. In particular, from Proposition~\ref{emb1}, $T$ has at most two
non-trivial coprime subdegrees on $\Omega$. The proof of Conjecture A' for finite classical groups is now complete.
\end{proof}

\section{Exceptional groups of Lie type}\label{exceptionalLie}

\begin{proof}[Proof of Theorem~A for the exceptional groups of Lie type]
Write $q=p^f$ for some prime $p$ and some $f\geq 1$. The group $T$ is one of the following exceptional simple groups:
$\FFF_4( q) $, $\G_ 2 (q) $ (with $q>2$), $\EEE_6 (q) $, $\EEE_7 (q)$, $\EEE_8 (q) $, ${^2}\BBB_2 (q) $
(with $p=2$ and $f=2f'+1$, where $f'\geq 1$), ${^3\DDD}_4(q)$,
${^2}\G_2(q) $ (with $p=3$ and $f=2f'+1$, where $f'\geq 1$), ${^2\FFF}_4(q)$ (with $p=2$ and $f\geq 2$) and ${^2}\EEE_6 (q) $. The group ${^2}F_4(2)$ is not simple and the Tits group ${^2}F_4(2)'$ will be considered in Section~\ref{sporadicsec} together with the sporadic simple groups.

For the proof of this result
we use~\cite{LS}. Liebeck and Seitz~\cite[Theorem~$2$]{LS} give a
reduction theorem to describe the maximal subgroups of the finite exceptional
groups (and their automorphism groups) similar to the well-known
result of Aschbacher \cite{Aschbacher} for the finite classical groups. They show that $M$ is either in one of five well specified
families listed in~\cite[Theorem~2 $(a)$--$(e)$]{LS} or is contained in the 
automorphism group of a finite simple group. In the latter case, as
$M/T_\omega$ is soluble, the group $\FF{T_\omega}$ is simple and 
the theorem follows from Remark~\ref{ps-max} and Proposition~\ref{emb1}. This shows that in
the rest of this proof we may assume that $M$ is in one of the five
families described in~\cite[Theorem~2 $(a)$--$(e)$]{LS}.

\smallskip

\noindent\textsc{The group $M$ is as  in~\cite[Theorem~$2$~$(a)$]{LS}. }

\noindent In this case, $M=\N A D$, where $D$ is either a parabolic
subgroup of $T$ or $D$ is given in~\cite[Theorem,
  Table~$5.1$ and~$5.2$]{LSS}. In the former case, $T_\omega$
contains a parabolic subgroup of $T$ and hence a Borel subgroup of
$T$. In particular, $T_\omega$ contains the normalizer of a Sylow
$p$-subgroup of $T$ and the theorem follows from
Proposition~\ref{sylow}.

Assume that $D$ is as in~\cite[Table~$5.1$]{LSS}. Now the structure of
$T_\omega$ is described in the second column
of~\cite[Table~$5.1$]{LSS}. With a direct inspection we see that in
each case 
$\cent {T_\omega}{\E{T_\omega}}$ is soluble and either $(i)$:
$\E{T_\omega}/\Z{\E{T_\omega}}$ is the direct product of pairwise
isomorphic simple groups, or $(ii)$: $\E{T_\omega}/\Z{\E{T_\omega}}$ is
the 
direct product of simple groups having multiplicity at most $3$ and
with a unique factor of largest order, or $(iii)$: $T=\G_2(3)$ and
$T_\omega$ is of type $2.(L_2(3)\times L_2(3)).2$, or $(iv)$:
$T=\EEE_7(3)$ and $T_\omega$ is of type $2^3.(L_2(3))^7.2^4.L_3(2)$,
or $(v)$: $T=\EEE_8(3)$ and $T_\omega$ is of type
$2^4.(L_2(3))^8.2^4.\mathrm{AGL}_3(2)$ (here we are using the notation
in~\cite[Table~$5.1$]{LSS}). In particular, in~$(i)$ and~$(ii)$ the
theorem  
follows from Proposition~\ref{emb1} and~\ref{emb2}
respectively. In~$(iii)$, we see 
that $T_\omega$  is a $\{2,3\}$-group, $\mu(T_\omega)\leq 2$ and the
result follows from Lemma~\ref{id2}. Now assume that $T$ and 
$T_\omega$ are as in~$(iv)$ or~$(v)$. 
Then $T_\omega$ contains a Sylow $2$-subgroup of $T$. 
As the Sylow $2$-subgroups of $T=\EEE_7(3)$ and $T=\EEE_8(3)$ are
self-normalizing (see~\cite[Theorem~$6$]{KM} or~\cite[Corollary]{Ko}), 
then we are done by Propositon~\ref{sylow}.

Assume that $D$ is as in~\cite[Table~$5.2$]{LSS}. Suppose that $T$ is
not a Suzuki group or a Ree group, that is, $T$ is not
${^2}\BBB_2(q)$, ${^2}\FFF_4(q)$ or ${^2}\G_2(q)$. Then with a direct
inspection on the order of $T$ and on~\cite[Table~$5.2$]{LSS}, we see
that $T_\omega$ is the normalizer of a Sylow $\Phi_e(q)$-torus
of $T$, for some $e$. For instance, in the last row of~\cite[Table~$5.2$]{LSS}, we
have that $T=\EEE_8(q)$ and $T_\omega=T\cap \N A D$ where $D$ is a torus
of $T$ of order $(q^2-q+1)^4$. In particular, since $\Phi_6(q)=q^2-q+1$
and since $4$ is the largest power of the polynomial $x^2-x+1$ dividing the
generic order of  $\EEE_8$, we obtain that $D$ is a
$\Phi_6(q)$-torus of $\EEE_8(q)$. Suppose that $q^e-1$ has a primitive
prime divisor $r$ with $r\geq 3$. It follows
from~\cite[Theorem~$5.14$]{GM} that either $T_\omega$ 
contains the normalizer of a Sylow $r$-subgroup of $T$, or
$T=\G_2(q)$, $r=3$ and $q\equiv 2,4,5$, or $7\mod 9$. In the former case, every
non-trivial subdegree of $T$ is divisible by $r$, by Proposition~\ref{sylow}. For the latter case,
we note that in~\cite[Table~$5.2$]{LSS} we have $q=3^f$ if
$T=\G_2(q)$. Hence $3$ does not divide $q^e-1$ and the latter case does not arise. It remains to
consider the case that either $q^e-1$ has no primitive prime divisors,
or $2$ is the only primitive prime divisor of $q^e-1$. Clearly, this
happens if and only if $e=2$ and $q+1$ is a power of $2$, or
$(e,q)=(6,2)$, or $e=1$ and $q-1$ is a power of $2$. Suppose that
$e=1$ and $q\equiv 1\mod 4$, or $e=2$ and $q\equiv 3\mod 4$. It follows
from~\cite[Theorem~$5.19$]{GM} that $T_\omega$ 
contains the normalizer of a Sylow $2$-subgroup of $T$ and hence, from
Proposition~\ref{sylow}, every
non-trivial subdegree of $T$ is divisible by $2$. Therefore, it
remains to consider the case that $(e,q)=(6,2)$ or $(1,3)$. Suppose
$(e,q)=(1,3)$. A direct inspection in~\cite[Table~$5.2$]{LSS} shows
that if $D$ is a $\Phi_1(q)$-torus of $T$, then $q>3$ (see the
``Condition'' column in~\cite[Table~$5.2$]{LSS}). Suppose that
$(e,q)=(6,2)$. Again a direct inspection in~\cite[Table~$5.2$]{LSS} shows
that if $D$ is a $\Phi_6(q)$-torus of $T$ (that is, $D$ has order a
power of $q^2-q+1$), then $q=2$ is permitted only if $T={^3}\DDD_4(q)$ (see the
``Condition'' column in~\cite[Table~$5.2$]{LSS}). Now, if
$T={^3}\DDD_4(q)$, $q=2$ and $T_\omega$ is the normalizer of a
$\Phi_6(q)$-torus of $T$, then from~\cite[Table~$5.2$]{LSS} we see
that $T_{\omega}$ is a $\{2,3\}$-group and the theorem follows from
Lemma~\ref{id2}.

Suppose that $T$ is a Suzuki group or a Ree group. Malle
in~\cite[Section~$8$]{GM} investigates the Sylow normalizers of
$T$. We use the notation and the terminology
from~\cite[Section~$8$]{GM}. Then with a direct
inspection of the order of $T$ and of~\cite[Table~$5.2$]{LSS}, we see
that $T_\omega$ is the normalizer of a Sylow $\Phi^{(r)}(q)$-torus
of $T$, for a suitable prime $r$ different from the defining
characteristic of $T$. 
It follows
from~\cite[Theorem~$8.4$]{GM} that either $(i):$ $T_\omega$ 
contains the normalizer of a Sylow $r$-subgroup of $T$, or $(ii):$ 
$T={^2}\G_2(3^{2f+1})$, $r=2$ and $D$ is the torus of size $q+1$, or $(iii):$
$T={^2}\FFF_4(2^{2f+1})$, $r=3$, $D$ is the torus of size $(q+1)^2$ and $2^{2f+1}\equiv 2,5\mod 9$. In~$(i)$, every
non-trivial subdegree of $T$ is divisible by $r$, by Proposition~\ref{sylow}. Suppose that $(ii)$
holds. We may assume that $2$ is the only prime dividing $q+1$
(otherwise we may apply~\cite[Theorem~$8.4$]{GM} to a prime $r'\neq 2$
dividing $q+1$ and we obtain that $T_\omega$ contains the normalizer of a Sylow
$r'$-subgroup). Now, as $q=3^{2f+1}$, we have that $q+1$ is a power of $2$ only if $f=0$, that is, $T={^2}\G_2(3)$ (which we excluded from our analysis). Finally assume
that $(iii)$ holds. Here we have $\Phi^{(r)}(q)=q+1$. Also, again arguing as in~$(ii)$ we may assume
that $q+1$ is a power of $3$. Now,~\cite[Table~$5.1$]{LSS} shows that
$T_{\omega}$ is a $\{2,3\}$-group and the result follows from Lemma~\ref{id2}.

\smallskip

\noindent\textsc{The group $M$ is as in~\cite[Theorem~$2$~$(b)$]{LS}. }

\noindent We have $M=\N A E$, where $E$ is the elementary abelian
$r$-group given in~\cite[Theorem~$1$~(II)]{CLSS} (here $r\neq
p$). We have $T_\omega= \N T E$. The pair $(T,E)$ and the structure of
$\cent T E$ and of 
$\N T E$ are as in~\cite[Table~$1$]{CLSS}. We have nine rows to
consider. If $(T,E)$ is in the~$5^\textrm{th}$,~$8^\textrm{th}$
or~$9^\textrm{th}$  row 
of~\cite[Table~$1$]{CLSS}, then $\E{T_\omega}$ is simple, $\cent
{T_\omega}{\E{T_\omega}}$ is soluble and the result follows from
Proposition~\ref{emb1}. Assume that $(T,E)$ is in the~$2^\textrm{nd}$ row
of~\cite[Table~1]{CLSS}, that is, $T={^2}\G_2(3)'$. As
$T\cong\PSL_2(8)$, the proof in this case was given in Section~\ref{classical}.

Finally, suppose that $(T,E)$ is one of the remaining cases:
$1^\textrm{st}$,~$3^\textrm{rd}$,~$4^\textrm{th}$,~$6^\textrm{th}$
or~$7^\textrm{th}$ row of~\cite[Table~$1$]{CLSS}. With a direct
inspection we see that $T_\omega$ contains a normal $r$-subgroup $R$
with $T_\omega/R$ a simple group (note that $\SL_3(2),\SL_3(3),\SL_5(2)$ and $\SL_3(5)$ are simple). We claim that
$\mu(T_\omega)\leq 2$, from which the theorem follows from
Lemma~\ref{id2}. We argue by contradiction and we assume that
$\mu(T_\omega)\geq 3$ and let $\{A_1,A_2,A_3\}$ be three maximal
subgroups of $T_\omega$ having pairwise relatively prime index in
$T_\omega$. From Lemma~\ref{lucky}, relabelling the $A_i$ if necessary, $r$ divides
$|T_\omega:A_3|$ and  $T_\omega/R=(A_1/R)(A_1/R)$ is a maximal coprime
factorization of $T_\omega/R$ (here note that $\Z {T_\omega/R}=1$ because $T_\omega/R$ is simple). Therefore $(T_\omega/R,A_1/R,A_2/R)$ is
one of the triples in~\cite[Table~$1$]{DGPS}. A direct inspection of $T_\omega/R$, of $r$
and of the maximal coprime factorizations of $T_\omega/U$
in~\cite[Table~$1$]{DGPS}, shows that $r$ divides either
$|T_\omega:A_1|$ or $|T_\omega:A_2|$, a contradiction.

\smallskip

\noindent\textsc{The group $M$ is as in~\cite[Theorem~$2$~$(c)$]{LS}. }

\noindent Here $M$ is the centralizer of a graph, field, or
graph-field automorphism of $T$ of prime order $r$
(see~\cite[Definition~$2.5.13$]{GLS} for 
a definition of these terms). In this case, the
structure of $M$ is described in~\cite[Section~$4.4$]{GLS}. Here we
use the notation in~\cite{GLS}. Write $T={^d}\Sigma(q)$, where
$\Sigma$ is the Lie type of $T$, $q$ is the characteristic and
$d=1,2,3$. We first  consider the case that $M$ is the centralizer of
a field automorphism $x$. Recall that ${^2B}_2(2)\cong
5:4$. From~\cite[Proposition~$4.9.1$]{GLS}, we have 
that $\E{M}/\Z{\E{M}}\cong {^d\Sigma}(q^{1/r})$. Since $\Sigma\in
\{\EEE,\FFF,\G,\BBB,\DDD\}$, we obtain that $\E{M}/\Z{\E{M}}$ and
hence $\E{T_\omega}/\Z{\E{T_\omega}}$ is simple except for
$T={^2\BBB}(2^r)$. Furthermore, from~\cite[Chapter~4]{GLS}, the group
$\cent{T_\omega}{\E{T_\omega}}$ is soluble. Therefore, if $T\neq
{^2}\BBB_2(2^r)$, the result follows from Proposition~\ref{emb1}. If
$T={^2}\BBB_2(2^r)$, then $M\cong (5:4)\times r$, $T_\omega\cong 5:4$,
$T_\omega$ is a $\{2,5\}$-group and the result follows from
Lemma~\ref{id2}.

Assume that $x$ is a graph-field
automorphism. Recall that from~\cite[Definition~$2.5.13$]{GLS}, we
have $T=\G_2(q)$, $\FFF_4(q)$ or
$\EEE_6(q)$. From~\cite[Proposition~$4.9.1$]{GLS}, we have $d=1$,
$r=2,3$ and $\E{M}/\Z{\E{M}}\cong {^r}\Sigma(q^{1/r})$. In particular,
$\E{M}/\Z{\E{M}}$ is simple.  Furthermore, from~\cite[Chapter~4]{GLS}, the group
$\cent{T_\omega}{\E{T_\omega}}$ is soluble and so the result follows
from Proposition~\ref{emb1}. 

If remains to study the case that $x$ is a graph
automorphism. 
Recall that
from~\cite[Definition~$2.5.13$~$(b)$,~$(d)$]{GLS} the groups ${^2}\BBB_2(q)$,
${^2}\FFF_4(q)$, ${^2}\G_2(q)$, $\FFF_4(q)$ and $\G_2(q)$ do not
admit graph automorphisms. In particular, $T=\EEE_6(q)$,
${^2}\EEE_6(q)$ or ${^3}\DDD_4(q)$. We consider separately
$T={^3}\DDD_4(2)$ and we 
use~\cite{ATLAS}. With a direct inspection on the maximal subgroups of
$T$, we see that either $T_\omega$ contains the normalizer of a
Sylow subgroup of $T$ (and hence the theorem follows from
Proposition~\ref{sylow}), or $\E{T_\omega}$ is simple and
$\cent{T_\omega}{\E{T_\omega}}$ is soluble (and hence the theorem
follows from Proposition~\ref{emb1}), or $T_\omega$ is a
$\{2,3\}$-group (and hence the theorem follows from Lemma~\ref{id2}). 
Now we continue the proof for the remaining groups. Note that
from~\cite[Sections~$4.5$,~$4.7$ 
  and~$4.9$]{GLS} the group $\cent{T_\omega}{\E{T_\omega}}$ is 
always soluble. 
From~\cite[Proposition~$4.9.2$~$(b)$]{GLS}, we see that for
$T=\EEE_6(q)$ or ${^2}\EEE_6(q)$ we have
$\E{T_\omega}/\Z{\E{T_\omega}}\cong \FFF_4(q)$ if $p=r=2$,  and for
$T=\DDD_4(q)$ or ${^3}\DDD_4(q)$ we have
$\E{T_\omega}/\Z{\E{T_\omega}}\cong \G_2(q)$ if $p=3$. Moreover,
from~\cite[Tables~$4.5.1$ and~$4.7.3A$]{GLS}, we see that for
$T=\EEE_6(q)$ or ${^2}\EEE_6(q)$ we have
$\E{T_\omega}/\Z{\E{T_\omega}}\cong \FFF_4(q)$ or $\mathrm{C}_4(q)$ if
$p\neq 2$ (depending on the conjugacy class of $x$), for $T=\DDD_4(q)$ we have
$\E{T_\omega}/\Z{\E{T_\omega}}\cong \G_2(q)$ if $p\neq 3$, and for
${^3}\DDD_4(q)$ we have
$\E{T_\omega}/\Z{\E{T_\omega}}\cong \PSL_3(q)$ or $\PSU_3(q)$
(depending whether $q\equiv 1\mod 3$ or $q\equiv 
-1\mod 3$ respectively). In particular, in each of these cases (as we
are excluding ${^3}\DDD_4(2)$) we may
use Proposition~\ref{emb1} and the theorem follows.

\smallskip

\noindent\textsc{The group $M$ is as in~\cite[Theorem~$2$~$(d)$]{LS}. }

\noindent In this case, $T=\EEE_8(q)$, $p>5$ and $\FF M=\Alt(5)\times
\Alt(6)$ or $\Alt(5)\times \PSL_2(q)$. Since $M/T_\omega$ is soluble,
we have $\FF M=\FF {T_\omega}$ and hence the theorem follows from
Proposition~\ref{emb2}. 

\smallskip

\noindent\textsc{The group $M$ is as in~\cite[Theorem~$2$~$(e)$]{LS}. }

\noindent In this case, $\FF M=\FF {T_\omega}$ is described in detail
in~\cite[Table~III]{LS}. With a direct inspection, we see that either 
$\FF{T_\omega}$ is the direct product of two nonabelian simple
groups, or $T=\EEE_8(q)$ and $\FF{T_\omega}=\E{T_\omega}\cong \PSL_2(q)\times
\G_2(q)\times \G_2(q)$ with $p>2$ and $q>3$. In the former case, the
theorem follows from Proposition~\ref{emb1} (if the two simple groups
are isomorphic) or Proposition~\ref{emb2} (if the two simple groups are
non-isomorphic).

Suppose that $T=\EEE_8(q)$ and write $N=\FF{T_\omega}\cong \PSL_2(q)\times
\G_2(q)\times \G_2(q)$. Since $T_\omega/N$ is soluble, the group $N$ is clearly the last term of the derived series of $T_\omega$. From Lemma~\ref{derivedSeries} $N$ fixes only the point $\omega$ of $\Omega$. 
We claim that $\mu(N)\leq 2$. Conjecture A' will follow from this claim and Lemma~\ref{id2}.

It remains to prove that $\mu(N)\leq2$. Write $N=S_1\times S_2\times S_3$ with $S_1\cong S_2\cong \G_2(q)$ and $S_3\cong \PSL_2(q)$.
We see from~\cite[Table~$1$]{DGPS} that $\mu(\PSL_2(q))\leq 2$ and that, for any two maximal subgroups $M_1$ and $M_2$ of $\G_2(q)$, the indices $|\G_2(q):M_1|$ and $|\G_2(q):M_2|$ are divisible by a non-trivial common factor. 
Suppose now that $A_1, A_2, A_3$ are maximal subgroups of $N$ of pairwise coprime indices. If $A_3$, say, projects onto each of the three simple direct factors $\{S_1,S_2,S_3\}$ of $N$, then $A_3 = D\times S_3$ where $D$ is a diagonal subgroup of $S_1\times S_2\cong \G_2(q)\times\G_2(q)$. As $|\PSL_2(q)|$ divides $|\G_2(q)|$, this implies that $|N:A_3|$ is divisible by the orders of each of the simple direct factors, a contradiction. If $A_2$ and $A_3$, say, do not project onto $S_1$, then by maximality we have $A_2=B\times S_2\times S_3$ and $A_3=C\times S_2\times S_3$, where $B$ and $C$ are proper subgroups of $S_1$.  Since  $\G_2(q)$ has no two maximal subgroups of pairwise coprime index, we obtain a contradiction. A similar argument applies for $S_2$. Hence, since $\mu(\PSL_2(q))=2$, relabelling the index set $\{1,2,3\}$ if necessary, we have that either $A_1=B_1\times S_2\times S_3$, $A_2=S_1\times B_2\times S_3$ and $A_3=S_1\times S_2\times B_3$ (where, for each $i$, $B_i$ is a
 maximal subgroup of $S_{i}$), or $A_1=B_1\times S_2\times S_3$, $A_2=S_1\times S_2\times B_3$ and $A_3=S_1\times S_2\times B_3'$ (where $B_1$ is a maximal subgroup of $S_{1}$ and $B_3,B_3'$ are maximal subgroup of $S_3$). In the former case, as  $\G_2(q)$ has no two maximal subgroups of pairwise coprime index, $|N:A_1|=|S_1:B_1|$ and $|N:A_2|=|S_2:B_2|$ are divisible by a non-trivial common factor, a contradiction. We now consider the latter case. If the characteristic $p$ divides $|N:A_3|$, then $|N:A_1|$ and $|N:A_2|$ are coprime to $q$ implying that $B_3, B_3'$ are parabolic subgroups of $S_3\cong \PSL_2(q)$, a contradiction. Hence $p$ is coprime with $|N:A_1|$ and $B_1$ is a maximal parabolic subgroup of $S_1\cong \G_2(q)$. So $|N:A_3|=(q^6-1)/(q-1)$ (which is divisible by $q+1$). However $|N:A_2|=|S_3:B_3|$ and $|N:A_3|=|S_3:B_3'|$ are coprime indices of maximal subgroups of $\PSL_2(q)$, and by \cite[Table 7]{DGPS}, one of these indices is the index of a parabolic, and hence
 equal to $q+1$, a contradiction. Thus $\mu(N)=2$. 
\end{proof}

\section{Sporadic groups}\label{sporadicsec}
\begin{proof}[Proof of Theorem~A for the sporadic groups]
The group $T$ is one of the $27$ sporadic simple groups (note that we
did not consider the simple group ${^2F}_4(2)'$ in
Section~\ref{exceptionalLie}).  Fix $\omega\in \Omega$.  
Since $T_\omega$ is pseudo-maximal in $T$, there exists an almost
simple group $A$ with socle $T$ and a maximal subgroup $M$ of $A$ with $T\nsubseteq
M$ and $T_\omega=T\cap M$. If $T$ is not the Fisher-Griess Monster, in
the proof of this result we may use  the complete list of the maximal
subgroups of $A$ available  in~\cite{ATLAS2,ATLAS}; in particular, the
tuple $(A,T,M,T_\omega)$ is in~\cite{ATLAS2,ATLAS}. If $T$ is the
Monster, then $\Out(T)=1$, $T$ has $43$ known conjugacy classes of
maximal subgroups and, by~\cite{2}, an unknown maximal subgroup of $T$
is almost simple. In particular, if $T$ is the Monster and $T_\omega$
is conjugate to one of these unknown maximal subgroups, then by
Proposition~\ref{emb1} we have that $T$ has at most two non-trivial coprime
subdegrees. This shows that in the rest of this proof we can simply
use the information on the subgroup lattice of the sporadic groups
in~\cite{ATLAS2,ATLAS}, including the Monster.  We use the notation
in~\cite{ATLAS}. 

In order to avoid a long list of cases to consider, we have checked
with~\texttt{magma} that this theorem holds true for $|\Omega|\leq
2000$  by a direct inspection (all primitive permutation groups of
degree at most $2000$ are in the \texttt{PrimitiveGroups}
database). From the ``Specification Structure'' column in the list of
maximal subgroups of $A$ in~\cite{ATLAS}, it can be readly cheched
whether Proposition~\ref{emb1} or~\ref{emb2} applies, in this case
the theorem immediately follows. Moreover, from  the ``Specification
Order'' column, it is immediate to see whether $T_\omega$ is a
$\{p,q\}$-group, from which the theorem follows from
Lemma~\ref{id2}. Furthermore, from the ``Specification Abstract''
column, sometimes it can be easily inferred whether $T_\omega$
contains the normalizer of a Sylow $p$-subgroup of $T$, for some prime $p$,  so the theorem
follows from Proposition~\ref{sylow} in this case. (For instance, if
$T=J_1$ and 
$T_\omega\cong 7:6$, then we see that $T_\omega$ is the normalizer of
a cyclic group of order $7$. Since a Sylow $7$-subgroup of $T$ has
order $7$, we obtain that $T_\omega$ contains the normalizer of a
Sylow $7$-subgroup. For later reference we give another example. If $T=McL$ and
$T_\omega=5_{+}^{1+2}:3:8$, we see that $T_\omega$ contains a Sylow
$5$-subgroup $P$ of $T$ and that $P\unlhd T$. As $T_\omega$ is a maximal
subgroup of $T$, we obtain $T_\omega=\N T P$.) Now the proof is a case-by-case analysis on the
tuples $(A,T,M,T_\omega)$ which do not meet any of the conditions
described in this paragraph: Table~\ref{table3} gives all possible
such pairs $(T,T_\omega)$. 

Let $(T,T_\omega)$ be one of the pairs in Table~\ref{table3} and let
$N$ be the last term of the derived series of $T_\omega$ (as we defined in Section~\ref{aux}, this means that
$N\unlhd T_\omega$, $T_\omega/N$ is soluble and $N=[N,N]$). Note that
$N>1$. From Lemma~\ref{derivedSeries}, the group $N$ fixes only the
point $\omega$ of $\Omega$.

Assume that $(T,T_\omega)$ is not one of the following nine pairs.

\[
\begin{array}{lclcl}
 (J_4,2^{3+12}.(S_5\times L_3(2))),&&(Fi_{24}',2^{3+12}.(L_3(2)\times
  A_6),&&(M,2^{3+6+12+18}.(3S_6\times L_3(2))),\\ 
(B,[2^{35}].(S_5\times L_3(2))),&&(McL,2^4:A_7),
  &&(Fi_{23},2^{6+8}.(A_7\times S_3)),\\ 
(Co_2,3_{+}^{1+4}:2_{-}^{1+4}.S_5),&&
(B,3_{+}^{1+8}:2_{-}^{1+6}.U_4(2).2),&& (B,5_{+}^{1+4}:2_{-}^{1+4}.A_5.4).\\
\end{array}
\] With a direct inspection, we see that $N$ contains a normal
$p$-subgroup $P$ such that $N/P$ is either a quasisimple group, or isomorphic to  $A_5\times A_5$ and $p=2$ (here $T$ is the Harada-Norton group
$HN$). We show that $\mu(N)\leq 2$, 
from which it follows by Lemma~\ref{id2} that $T$ has at most two
non-trivial coprime subdegrees. We argue by contradiction and we
assume that $\mu(N)\geq 3$ and we let $A_1,A_2,A_3$ be three distinct
maximal subgroups of $N$ with pairwise coprime index in $N$. Let $U/P$ be the
centre of $N/P$. From
Lemma~\ref{lucky}, $p$ divides $|N:A_3|$ and $N/U=(A_1/U)(A_2/U)$ is a
maximal coprime factorization of $N/U$. Suppose first that
$N/U=A_5\times A_5$. Since in every coprime factorization of
$A_5\times A_5$, one of the two maximal subgroups has even index and
as $p=2$, we obtain  a contradiction. Suppose now that $N/U$ is
simple. Therefore $(N/U,A_1/U,A_2/U)$ is in~\cite[Table~1]{DGPS}. A
direct inspection on $N/U$, on $p$, and on the maximal coprime
factorizations of $N/U$ in~\cite[Table~1]{DGPS} shows that $p$ divides
either $|N:A_1|$  or $|N:A_2|$, a contradiction. 

It remains to consider the case that $(T,T_\omega)$ is one of the nine
pairs that we excluded above. Suppose that $(T,T_\omega)$ is one of
the first six pairs (those in the first two rows). It is immediate, comparing the order of
$T_\omega$ with the order of $T$, to check that $T_\omega$ contains
a Sylow $2$-subgroup $S$ of $T$. It can be readily seen
from~\cite[Theorem~$3$]{Ko} that $S$ is self-normalizing in $T$, that
is, $S=\N T S$. In particular, $T_\omega$ contains the normalizer of a
Sylow $2$-subgroup of $T$ and hence, from Proposition~\ref{sylow},
every non-trivial subdegree of $T$ is even.

Assume that $T=B$ and
$T_\omega=5_{+}^{1+4}:2_{-}^{1+4}.A_5.4$. From~\cite[Section~$3$ and
  Table~III]{Wilson}, 
we see that $T_\omega$ contains the normalizer of a Sylow $5$-subgroup
of $T$ and hence, by Proposition~\ref{sylow}, every non-trivial
subdegree of $T$ is divisible by $5$. 

Assume that $T=B$ and
$T_\omega=3_{+}^{1+8}:2_{-}^{1+6}.U_4(2).2$. Write
$N=[T_\omega,T_\omega]$. From Lemma~\ref{derivedSeries}, $N$ fixes only the point $\omega$ of $\Omega$. Next we show that
$\mu(N)=2$, from which the theorem follows from Lemma~\ref{id2}. We
argue by contradiction and we assume that $\mu(N)\geq 3$ and we let
$A_1,A_2$ and $A_3$ be three maximal subgroups of $N$ with pairwise
coprime index in $N$. Let $U$ be the normal subgroup of $N$ with
$N/U\cong U_4(2)$. Note that if $U\nsubseteq A_i$, then $N=A_iU$ and
 $|N:A_i|=|U:(U\cap A_i)|$ is divisible by $2$ or $3$. Since $U$ is
a $\{2,3\}$-group, there exist at most two elements of
$\{A_1,A_2,A_3\}$ not containing $U$. Moreover, if $U\leq A_i$, then
$A_i/U$ is a maximal subgroup of the simple group $N/U$. As
$\mu(U_4(2))\leq 2$ from Lemma~\ref{emb1}, there exist at most two
elements of $\{A_1,A_2,A_3\}$ containing $U$. Therefore, relabelling the set 
$\{A_1,A_2,A_3\}$ if necessary, we have two cases to consider $(i):$
$U\leq A_1,A_2$ and $U\nsubseteq A_3$, or $(ii):$ $U\leq A_1$ and
$U\nsubseteq A_2,A_3$. In~$(i)$, $N/U=(A_1/U)(A_2/U)$ is a maximal
factorization of $N/U$ with two subgroups having coprime index. With a direct
inspection on the subgroup lattice of $U_4(2)$ (or
from~\cite[Table~$1$]{DGPS}), we see that (replacing $A_1$ by $A_2$ if
necessary)  $|N:A_1|=27$ and $|N:A_2|=40$. Since
$|N:A_3|=2^\alpha 3^\beta$ for some $\alpha$ and $\beta$, we obtain a
contradiction. In~$(i)$, replacing $A_2$ 
by $A_3$ if necessary, we may assume that $|N:A_2|$ is divisible by
$2$ and $|N:A_3|$ is divisible by $3$. Now $A_1/U$ is a maximal
subgroup of $N/U$. With a direct inspection on the subgroup lattice of
$U_4(2)$ we see that $|N:A_1|\in \{27,36,40,45\}$. Since each of these
numbers is divisible by $2$ or by $3$, we obtain a contradiction.

It remains to consider the case that $T=Co_2$ and
$T_\omega=3_{+}^{1+4}:2_{-}^{1+4}.S_5$. Comparing the order of $T$
with the order of $T_\omega$ we see that $T_\omega$ contains a Sylow
$3$-subgroup $S$ of $T$. From~\cite[Section~$2$ and 
  Table~I]{Wilson}, 
we see that $|\N T S|=32|S|$. The generators of $T_\omega$
are available in~\cite{ATLAS2}. Now by a computation in
\texttt{magma} we check that $|\N {T_\omega} S |=32|S|$ and hence
$T_\omega$ contains the normalizer of a Sylow $3$-subgroup of $T$. By
Proposition~\ref{sylow}, every non-trivial subdegree of $T$ is
divisible by $3$.  
\end{proof}
 
\begin{table}[!H]
\begin{tabular}{|c|l|}\hline
Group $T$&Subgroup $T_\omega$\\\hline
$HS$ &$2^4.S_6$, $4^3:L_3(2)$, $4.2^4:S_5$\\
$J_3$&$2^4:(3\times A_5)$, $2_{-}^{1+4}:A_5$\\
$M_{24}$&$2^6:(L_3(2)\times S_3)$\\
$McL$&$3_{+}^{1+4}.2S_5$, $3^4:M_{10}$, $2^4:A_7$\\
$He$&$2^6:3.S_6$, $2_{+}^{1+6}L_3(2)$, $7^2:2L_2(7)$\\
$Ru$&$2^{3+8}:L_3(2)$, $2.2^{4+6}:S_5$, $5^2:4S_5$\\
$Suz$&$2_{-}^{1+6}:U_4(2)$, $3^5:M_{11}$, $2^{4+6}:3A_6$, $2^{2+8}:(A_5\times S_3)$\\
$O'N$&$4^3:L_3(2)$\\
$Co_3$&$3^5:(2\times M_{11})$, $3_{+}^{1+4}:4S_6$, $2^4.A_8$\\
$Co_2$&$2^{10}:M_{22}:2$, $2_{+}^{1+8}:S_6(2)$, $(2_{+}^{1+6}\times 2^{4}).A_8$, $2^{4+10}(S_5\times S_3)$, $3_{+}^{1+4}:2_{-}^{1+4}.S_5$\\
$Fi_{22}$&$3^5:(2\times U_4(2))$, $2^{10}:M_{22}$, $2^6:S_6(2)$, $(2\times 2_{+}^{1+8}:U_4(2)):2$, $(2_{+}^{5+8}:(S_3\times A_6))$\\
$HN$&$2_{+}^{1+8}.(A_5\times A_5).2$, $2^6.U_4(2)$, $2^3.2^2.2^6.(3\times L_3(2))$, $5^2.5.5^2:4A_5$, $3_{+}^{1+4}:4A5$\\
$Ly$&$5^3.L_3(5)$, $5_{+}^{1+4}:4S_6$, $3^5:(2\times M_{11})$, $3^{2+4}:(2A_5).D_8$\\
$Th$&$2^5.L_5(2)$, $2^{1+8}.A_9$, $3^5:2S_6$, $5^2:\GL_2(5)$\\
$Fi_{23}$&$2^{11}.M_{23}$, $3^{10}.(L_3(3)\times 2)$, $(2^2\times 2_{+}^{1+8}).(3\times U_4(2)).2$, $2^{6+8}:(A_7\times S_3)$\\
$Co_1$&$2^{11}.M_{24}$, $2_{+}^{1+8}.O_8^+(2)$, $2^{2+12}:(A_8\times S_3)$, $5^2:4A_5$, $2^{4+12}.(S_3\times 3S_6)$, $3^6:2M_{12}$,\\
& $3_{+}^{1+4}:2U_4(2):2$, $5_{+}^{1+2}:\GL_2(5)$, $5^3:(4\times A_5).2$,\\
$J_4$&$2^{11}:M_{24}$, $2^{10}:L_5(2)$, $2_{+}^{1+12}.3M_{22}:2$, $2^{3+12}.(S_5\times L_3(2))$\\
$Fi_{24}'$&$3^7.O_7(3)$, $3^{1+10}:U_5(2):2$, $2^{11}.M_{24}$, $2^{1+12}:3.U_4(3).2$, $3^{2+4+8}.(A_5\times 2A_4).2$,\\
& $[3^{13}]:(L_3(3)\times 2)$, $2^{3+12}.(L_3(2)\times A_6)$, $2^{6+8}.(S_3\times A_8)$\\
$B$&$2^{1+22}.Co_2$, $2^{9+16}.S_8(2)$, $2^{2+10+20}(M_{22}:2\times S_3)$, $[2^{30}].L_5(2)$, $[2^{35}].(S_5\times L_3(2))$,\\
& $3^{1+8}.2^{1+6}.U_4(2).2$, $5^3.L_3(5)$, $5^{1+4}.2^{1+4}.A_5.4$\\
$M$&$2^{1+24}.Co_1$, $2^{10+16}.O_{10}^+(2)$, $2^{2+11+22}.(M_{24}\times S_3)$, $3^{1+12}.2Suz.2$,\\
& $2^{5+10+20}.(S_3\times L_5(2))$, $2^{3+6+12+18}.(3S_6\times L_3(2))$, $3^8.O_8^-(3).2$,\\
& $3^{2+5+10}.(M_{11}\times 2S_4)$, $3^{3+2+6+6}:(L_3(3)\times SD_{16})$, $5^{1+6}:2J_2:4$,\\
& $5^{3+3}.(2\times L_3(5))$, $5^{2+2+4}:(S_3\times \GL_2(5))$, $7^{1+4}:(3\times 2S_7)$,\\
& $5^4:(3\times 2L_2(25)):2$, $7^{2+1+2}:\GL_2(7)$, $13^2:2L_2(13):4$, $7^2:\SL_2(7)$\\\hline
\end{tabular}
\caption{Pseudo-maximal subgroups of $T$ relevant to the proof of Theorem~A for the sporadic simple groups}\label{table3}
\end{table}

\thebibliography{10}

\bibitem{ATLAS2}Atlas of Group Representations, \texttt{http://brauer.maths.qmul.ac.uk/Atlas/v3/spor}.

\bibitem{Aschbacher}M.~Aschbacher, \textit{Finite Group Theory},
  Second Edition, 
  Cambridge studies in advanced mathematics \textbf{10}, Cambridge
  University Press~2000.

\bibitem{BadPr}R.~W.~Baddeley, C.~E.~Praeger,
On classifying all full factorisations and multiple-factorisations of
the finite almost simple groups, \textit{J. Algebra} \textbf{204}
(1998), 129--187.

\bibitem{magma}W.~Bosma, J.~Cannon, C.~Playoust, The Magma algebra system. I. The user language, \textit{J. Symbolic Comput.} \textbf{24} (1997), 235--265.

\bibitem{2}J.~N.~Bray, R.~A.~Wilson, Explicit representations of maximal subgroups of the Monster, \textit{J. Algebra} \textbf{300} (2006), 834--857.

\bibitem{Peter}P.~J.~Cameron, \textit{Permutation groups}, London
  Mathematical Society, Student Texts \textbf{45}, Cambridge
  University Press, 1999.

\bibitem{CLSS}A.~M.~Cohen, M.~W.~Liebeck, J.~Saxl, G.~M.~Seitz, The
  local maximal subgroups of exceptional groups of Lie type, finite
  and algebraic, \textit{Proc. London Math. Soc. (3)} \textbf{64} (1992), 12--48. 

\bibitem{ATLAS}J.~H.~Conway, R.~T.~Curtis, S.~P.~Norton, R.~A.~Parker,
  R.~A.~Wilson, \textit{Atlas of finite groups}, Clarendon Press, Oxford, 1985.

\bibitem{DM}J.~D.~Dixon, B.~Mortimer, \textit{Permutation Groups},
  Springer-Verlag, New York, 1996.

\bibitem{DGPS}S.~Dolfi, R.~Guralnick, C.~E.~Praeger, P.~Spiga, Coprime
  subdegrees for primitive permutation groups and completely 
reducible linear groups, \textit{Israel J. Math.} to appear. Preprint available at the Math arXiv: \texttt{
http://arxiv.org/abs/1109.6559}

\bibitem{Pr1}D.~Easdown, C.~E.~Praeger, On minimal faithful
  permutation representations of finite groups,
  \textit{Bull. Austr. Math. Soc.} \textbf{38} (1988), 207--220.

\bibitem{GLS}D.~Goreinstein, R.~Lyons, R.~Solomon, \textit{The
  Classification of the Finite Simple Groups, Number 3}, Mathematical
  Surveys and Monographs, American Mathematical Society, Providence,
  Rhode Island, 1998.

\bibitem{IP}M.~Isaacs, C.~E.~Praeger, 
Permutation group subdegrees and the common divisor graph,
\textit{J. Algebra} \textbf{159} (1993), 158--175.

\bibitem{KL}P.~Kleidman, M.~Liebeck, The Subgroup Structure of the
  Finite Classical Groups, \textit{London Mathematical Society Lecture
  Note Series} \textbf{129}, Cambridge University Press, Cambridge, 1990.

\bibitem{Ko}A.~S.~Kondrat'ev, Normalizers of the Sylow $2$-subgroups
  in Finite Simple Groups, \textit{Mathematical Notes } \textbf{78}
  (2005), 338--346.

\bibitem{KM}A.~S.~Kondrat'ev, V.~D. Mazurov,  $2$-signalizers of finite simple 
groups, \textit{Algebra and Logic} \textbf{42} (2003),  594--623. 

\bibitem{LPS1}M.~W.~Liebeck, C.~E.~Praeger, J.~Saxl, On the O'Nan-Scott
  theorem for finite primitive permutation
  groups, \textit{J. Austral. Math. Soc. Ser. A} \textbf{44} (1988),
  389-–396.   

\bibitem{LPS}M.~W.~Liebeck, C.~E.~Praeger, J.~Saxl, \textit{The maximal
  factorizations of the finite simple groups and their automorphism
  groups}, Memoirs of the American Mathematical Society, Volume
  \textbf{86}, Nr \textbf{432}, Providence, Rhode Island, USA, 1990. 

\bibitem{LS}M.~W.~Liebeck, G.~M.~Seitz, Maximal subgroups of
  exceptional groups of Lie type, finite and algebraic,
  \textit{Geometriae Dedicata} 
  \textbf{35} (1990), 353--387.

\bibitem{LSS}M.~W.~Liebeck, J.~Saxl, G.~M.~Seitz, Subgroups of maximal
  rank in finite exceptional groups of Lie type, \textit{Proc. London
    Math. Soc. (3)} \textbf{65} (1992), 297--325. 

\bibitem{Livingstone}D.~Livingstone, A permutation representation of the
  Janko group, \textit{J.~Algebra} \textbf{6} (1967), 43--55.

\bibitem{GM}G.~Malle, Height $0$ characters of finite groups of Lie type, \textit{Representation Theory} \textbf{11} (2007), 192--220.

\bibitem{Piter}P.~M.~Neumann, \textit{Topics in group theory and computation}:
  proceedings of a Summer School held at University College, Galway
  under the auspices of the Royal Irish Academy from 16th to 21st
  August, 1973. Edited by Michael P.~J.~Curran.

\bibitem{Pr}C.~E.~Praeger, Finite quasiprimitive graphs, in Surveys in
  combinatorics, \textit{London Mathematical Society Lecture Note Series}, vol. 24 (1997), 65--85.

\bibitem{Seitz}G.~M.~Seitz,
Unipotent subgroups of groups of Lie type,
\textit{J. Algebra} \textbf{84} (1983), 253--278.

\bibitem{Suzuki}M.~Suzuki, \textit{Group Theory I}, Springer-Verlag, New
York~1982.

\bibitem{Wielandt}H.~Wielandt, \textit{Finite Permutation Groups},
  Academic Press, New York and London, 1968.

\bibitem{Wilson}R.~A.~Wilson, The McKay conjecture is true for the
  sporadic simple  groups, \textit{J. Algebra} \textbf{207} (1998), 294--305. 
\end{document}